\newtheorem{theorem}{Theorem}[section]
\newtheorem{lemma}[theorem]{Lemma}
\theoremstyle{remark}
\newtheorem{exam}[theorem]{\textbf{Example}}
\def\NN{\mathds{N}}
\def\RR{\mathbb{R}}
\def\QQ{\mathbb{Q}}
\def\CC{\mathbb{C}}
\def\ZZ{\mathbb{Z}}
\def\kk{\mathds{k}}
\begin{document}
	
	\def\NN{\mathbb{N}}
	\def\RR{\mathds{R}}
	\def\HH{I\!\! H}
	\def\QQ{\mathbb{Q}}
	\def\CC{\mathds{C}}
	\def\ZZ{\mathbb{Z}}
	\def\DD{\mathds{D}}
	\def\OO{\mathcal{O}}
	\def\kk{\mathds{k}}
	\def\KK{\mathbb{L}}
	\def\ho{\mathcal{H}_0^{\frac{h(d)}{2}}}
	\def\LL{\mathbb{L}}
	\def\L{\mathds{k}_2^{(2)}}
	\def\M{\mathds{k}_2^{(1)}}
	\def\k{\mathds{k}^{(*)}}
	\def\l{\mathds{L}}

	\selectlanguage{english}
	\title{On the unit group and the $2$-class number of $\mathbb{Q}(\sqrt{2},\sqrt{p},\sqrt{q})$}

	\author[M. M. Chems-Eddin]{Mohamed Mahmoud Chems-Eddin}
	\address{Mohamed Mahmoud CHEMS-EDDIN: Department of Mathematics, Faculty of Sciences Dhar El Mahraz,
		Sidi Mohamed Ben Abdellah University, Fez,  Morocco}
	\email{2m.chemseddin@gmail.com}

	\author[M. B. T. El Hamam]{Moha Ben Taleb El Hamam}
	\address{Moha Ben Taleb El Hamam: Department of Mathematics, Faculty of Sciences Dhar El Mahraz,
		Sidi Mohamed Ben Abdellah University, Fez,  Morocco}
	\email{mohaelhomam@gmail.com}

	 \author[M. A.  Hajjami]{Moulay Ahmed Hajjami} 
	\address{Moulay Ahmed Hajjami, Department of  Mathematics, Faculty of  Sciences and Technology, Moulay Ismail University of Meknes, Errachidia, Morocco.}
	\email{a.hajjami76@gmail.com}

	\subjclass[2010]{11R04, 11R27, 11R29, 11R37.}
	\keywords{Real multiquadratic   fields,  unit group, $2$-class group, $2$-class  number.}

	\begin{abstract}
	Let  $p\equiv 1\pmod{8}$ and $q\equiv7\pmod 8$ be two prime numbers. The purpose of this paper is to compute the unit groups of the fields $\KK=\QQ(\sqrt 2, \sqrt{p}, \sqrt{q} )$ and give their $2$-class numbers. 
	\end{abstract}
	
	\selectlanguage{english}
	
	\maketitle
 	\section{\bf Introduction}
	Let  $k$ be a  number field of degree $n$  (i.e., $[k: \QQ] = n$).  Denote by $E_k$  the unit group of $k$ that is  the group of the invertible elements of the ring $\mathcal{O}_k$ of algebraic integers of the number field  $k$.   By the well known Dirichlet's unit theorem, if $n=r+2s$, where $r$ is the number of real embeddings and $s$ the number of conjugate pairs of complex embeddings of $k$, then there exist $t=r+s-1$ units of $\mathcal{O}_k$ that generate $E_k$ (modulo the roots of unity), and these $t$ units are called the {\it  fundamental system of units of $k$}. Therefore, it is well known that
	$$E_k\simeq \mu(k)\times \mathbb{Z}^{r+s-1},$$
	where   $\mu(k)$  is the group of roots of unity contained in $k$.
	
	One major  problem in algebraic number theory (more precisely in theory of units of number fields which is related to almost all areas of algebraic number theory) is the computation of the
	fundamental system of units. For  quadratic fields, the problem
	is easily solved. An early study of unit groups of multiquadratic fields was established by Varmon \cite{Varmon}.    For quartic bicyclic fields, Kubota \cite{Ku-56} gave
	a method for finding a fundamental system of units.
	Wada \cite{wada} generalized Kubota's method, creating an algorithm for computing
	fundamental units in any given multiquadratic field. However, in general, it is not easy to  compute the unit group of a number field  especially for number fields of degree more than $4$. Actually, in  literature  there are only few examples of computation of the unit group of a given number field $k$ of degree $8$ (see   \cite{ChemsUnits9,CAZ,chemszekhniniaziziUnits1}). In the present work, we focus on the computation of the unit group of the real triquadratic fields of the form 
      $\KK=\QQ(\sqrt 2, \sqrt{p}, \sqrt{q} )$, where $p\equiv 1\pmod{8}$ and $q\equiv7\pmod 8$, and furthermore, we give the class number of these fields.

	Notice that the motivation behind the computation of the unit groups of these fields is the fact that $\KK$ is the first layer of the cyclotomic  $\ZZ_2$-extension of the biquadratic field $\QQ(\sqrt{p}, \sqrt{q} )$ (cf. \cite{ChemsUnits9,CAZ}). Furthermore, computing the unit group of the fields  $\KK$ is also the first step to find the unit group of all fields of the form $\KK(\sqrt{-\ell})$, where $\ell\geq 1$ is a positive square-free integer (cf. \cite{azizunints99}). We note that the unit group of these fields are useful for the study of the Hilbert $2$-class field tower of the subfields $\KK(\sqrt{-\ell})$ (see for example \cite{chemszekhniniaziziUnits1}).
	We note also that this paper is a continuation of the paper \cite{Chemsarith} and for further works in the same direction we refer the reader to \cite{Bulant,Varmon,kuvcera1995parity}.

	Let $\varepsilon_\ell$  (resp. $h_2(\ell)$) denote the fundamental unit of (resp. the $2$-class number) of   a real quadratic field $\QQ(\sqrt{\ell})$, where $\ell>1 $ is a positive square-free integer. Let $h_2(k)$ denote the $2$-class number of a number field $k$.
	
	\section{\bf Preliminaries} 	
	Let us start this section by  recalling the method given in    \cite{wada}, that describes a fundamental system  of units of a real  multiquadratic field $K_0$. Let  $\sigma_1$ and 
	$\sigma_2$ be two distinct elements of order $2$ of the Galois group of $K_0/\mathbb{Q}$. Let $K_1$, $K_2$ and $K_3$ be the three subextensions of $K_0$ invariant by  $\sigma_1$,
	$\sigma_2$ and $\sigma_3= \sigma_1\sigma_3$, respectively. Let $\varepsilon$ denote a unit of $K_0$. Then \label{algo wada}
	$$\varepsilon^2=\varepsilon\varepsilon^{\sigma_1}  \varepsilon\varepsilon^{\sigma_2}(\varepsilon^{\sigma_1}\varepsilon^{\sigma_2})^{-1},$$
	and we have, $\varepsilon\varepsilon^{\sigma_1}\in E_{K_1}$, $\varepsilon\varepsilon^{\sigma_2}\in E_{K_2}$  and $\varepsilon^{\sigma_1}\varepsilon^{\sigma_2}\in E_{K_3}$.
	It follows that the unit group of $K_0$  
	is generated by the elements of  $E_{K_1}$, $E_{K_2}$ and $E_{K_3}$, and the square roots of elements of   $E_{K_1}E_{K_2}E_{K_3}$ which are perfect squares in $K_0$.
	
	This method is very useful for computing a fundamental system of units of a real biquadratic number field, however, in the case of real triquadratic 
	number field the problem of the determination of the unit group becomes very difficult and demands some specific computations and eliminations, as what we will see in the next section.
	We shall consider the field $\KK=\mathbb{Q}(\sqrt{2},\sqrt{p},\sqrt{q})$, where $p$ and $q$ are two distinct prime numbers. Thus, we have the following diagram:

	\begin{figure}[H]
		$$\xymatrix@R=0.8cm@C=0.3cm{
			&\KK=\QQ( \sqrt 2, \sqrt{p}, \sqrt{q})\ar@{<-}[d] \ar@{<-}[dr] \ar@{<-}[ld] \\
			k_1=\QQ(\sqrt 2,\sqrt{p})\ar@{<-}[dr]& k_2=\QQ(\sqrt 2, \sqrt{q}) \ar@{<-}[d]& k_3=\QQ(\sqrt 2, \sqrt{pq})\ar@{<-}[ld]\\
			&\QQ(\sqrt 2)}$$
		\caption{Intermediate fields of $\KK/\QQ(\sqrt 2)$}\label{fig:I}
	\end{figure}
	Let $\tau_1$, $\tau_2$ and $\tau_3$ be the elements of  $ \mathrm{Gal}(\KK/\QQ)$ defined by
	\begin{center}	\begin{tabular}{l l l }
			$\tau_1(\sqrt{2})=-\sqrt{2}$, \qquad & $\tau_1(\sqrt{p})=\sqrt{p}$, \qquad & $\tau_1(\sqrt{q})=\sqrt{q},$\\
			$\tau_2(\sqrt{2})=\sqrt{2}$, \qquad & $\tau_2(\sqrt{p})=-\sqrt{p}$, \qquad &  $\tau_2(\sqrt{q})=\sqrt{q},$\\
			$\tau_3(\sqrt{2})=\sqrt{2}$, \qquad &$\tau_3(\sqrt{p})=\sqrt{p}$, \qquad & $\tau_3(\sqrt{q})=-\sqrt{q}.$
		\end{tabular}
	\end{center}
	Note that  $\mathrm{Gal}(\KK/\QQ)=\langle \tau_1, \tau_2, \tau_3\rangle$
	and the subfields  $k_1$, $k_2$ and $k_3$ are
	fixed by  $\langle \tau_3\rangle$, $\langle\tau_2\rangle$ and $\langle\tau_2\tau_3\rangle$ respectively. Therefore,\label{fsu preparations} a fundamental system of units  of $\KK$ consists  of seven  units chosen from those of $k_1$, $k_2$ and $k_3$, and  from the square roots of the elements of $E_{k_1}E_{k_2}E_{k_3}$ which are squares in $\KK$. With these notations, we have:

	\begin{lemma}\label{lm noms esp_2p}
		Let $p\equiv 1\pmod 8$ be a prime number. Put $\varepsilon_{2p}=\beta+\alpha\sqrt{2p}$ with $\beta , \alpha\in\ZZ$. If $N(\varepsilon_{2p})=1$, then  $\sqrt{\varepsilon_{2p}}=\frac{1}{\sqrt2}(\alpha_1+\alpha_2\sqrt{ 2p})$, for some integers $\alpha_1, \alpha_2$ such that $\alpha=\alpha_1 \alpha_2$. It follows that: 
		\begin{eqnarray}\label{T_3_-1_eqi2p_N=1} 
			\begin{tabular}{ |c|c|c|c|c|c|c|c|c|}
				\hline
				$  \sigma $               &$1+\tau_2$         &$1+\tau_1\tau_2$    &$1+\tau_1\tau_3$         &$1+\tau_2\tau_3$&    $1+\tau_1$      \\
				\hline
				$\sqrt{\varepsilon_{2p}}^\sigma$&$(-1)^{u}$           &$-\varepsilon_{2p}$                &	$(-1)^{u+1}$                 &$(-1)^u$  &   $(-1)^{u+1}$                     \\
				\hline
			\end{tabular} 
		\end{eqnarray}
		for some $u$   in $\{0, 1\}$ such that $\frac 12(\alpha_1^2-2p\alpha_2^2)=(-1)^u$\label{the int u}.	
		
	\end{lemma}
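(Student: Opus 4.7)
The plan is first to pin down the precise shape of $\sqrt{\varepsilon_{2p}}$, and then to read off the table by a direct Galois computation.

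To obtain the shape, I would start from $N(\varepsilon_{2p})=1$, which gives $(\beta-1)(\beta+1)=2p\alpha^{2}$. A reduction modulo $2$ forces $\beta$ to be odd, and a reduction modulo $8$ forces $\alpha$ to be even; writing $\alpha=2\alpha_{0}$ yields $(\beta-1)(\beta+1)=8p\alpha_{0}^{2}$. Now $\gcd(\beta-1,\beta+1)=2$, exactly one of the two factors is divisible by $4$, and $p$ being odd divides exactly one of them. Splitting along these two binary choices gives four sub-cases, each of which factors the two coprime pieces $\frac{\beta\pm 1}{2}$ uniquely (up to the primes $2$ and $p$) as products of squares. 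Applying the identity
\[ 2\varepsilon_{2p}=\bigl(\sqrt{\beta+1}+\sqrt{\beta-1}\bigr)^{2}, \]
I would evaluate $\sqrt{\varepsilon_{2p}}$ in each sub-case. In two of the four, one of $\beta\pm1$ has the form $4pc^{2}$ and the other $2d^{2}$; this produces $\sqrt{\varepsilon_{2p}}=d+c\sqrt{2p}\in\QQ(\sqrt{2p})$, which contradicts the fundamentality of $\varepsilon_{2p}$ and must therefore be excluded. In the two remaining sub-cases, $\{\beta-1,\beta+1\}=\{4c^{2},\,2pd^{2}\}$, whence
\[ \sqrt{\varepsilon_{2p}}=\frac{1}{\sqrt{2}}\bigl(\alpha_{1}+\alpha_{2}\sqrt{2p}\bigr)\qquad\text{with } \alpha_{1}=2c,\ \alpha_{2}=d, \]
and $\alpha_{1}\alpha_{2}=2cd=\alpha$, as asserted. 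The computation $\alpha_{1}^{2}-2p\alpha_{2}^{2}=\pm\bigl((\beta+1)-(\beta-1)\bigr)=\pm 2$ then identifies $u\in\{0,1\}$ via $\tfrac{1}{2}(\alpha_{1}^{2}-2p\alpha_{2}^{2})=(-1)^{u}$.

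With the shape in hand, the table is routine. Setting $x=\sqrt{\varepsilon_{2p}}$, the key remark is that $x\in\QQ(\sqrt{2},\sqrt{p})$, a field fixed by $\tau_{3}$, so $\tau_{3}(x)=x$; this immediately reduces the columns labelled $1+\tau_{1}\tau_{3}$ and $1+\tau_{2}\tau_{3}$ to those of $1+\tau_{1}$ and $1+\tau_{2}$. Using $\tau_{1}(\sqrt{2})=-\sqrt{2}$, $\tau_{1}(\sqrt{2p})=-\sqrt{2p}$, $\tau_{2}(\sqrt{2})=\sqrt{2}$ and $\tau_{2}(\sqrt{2p})=-\sqrt{2p}$, one finds $\tau_{2}(x)=\tfrac{1}{\sqrt{2}}(\alpha_{1}-\alpha_{2}\sqrt{2p})$, $\tau_{1}(x)=-\tau_{2}(x)$ and $\tau_{1}\tau_{2}(x)=-x$. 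Multiplying $x$ by each conjugate then gives $x\,\tau_{2}(x)=\tfrac{1}{2}(\alpha_{1}^{2}-2p\alpha_{2}^{2})=(-1)^{u}$, $x\,\tau_{1}\tau_{2}(x)=-x^{2}=-\varepsilon_{2p}$, and $x\,\tau_{1}(x)=-x\,\tau_{2}(x)=(-1)^{u+1}$; the two remaining entries follow from $\tau_{3}(x)=x$.

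The main obstacle is the case analysis for the shape of $\sqrt{\varepsilon_{2p}}$: the crucial step is to invoke the fundamentality of $\varepsilon_{2p}$ in $\QQ(\sqrt{2p})$ in order to discard the two sub-cases in which $\sqrt{\varepsilon_{2p}}$ would already lie in $\QQ(\sqrt{2p})$. Once the shape is fixed, the Galois action table is just bookkeeping.
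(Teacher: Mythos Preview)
Your proof is correct. The paper does not prove this lemma itself but merely cites it from \cite{Chemsarith}, so there is no in-paper argument to compare against; your factorisation of $(\beta-1)(\beta+1)$ and case elimination is exactly the style used in the paper for the analogous Lemmas~\ref{lm expressions of units under cond 2} and~\ref{lm expressions of units under cond 3 case: 7=1 mod 8 and ()=1}, and your elimination of the two bad sub-cases by fundamentality is precisely the content of Lemma~\ref{1:046}. The Galois table computation via $\tau_3(x)=x$ and the explicit conjugates is straightforward and matches the stated entries.
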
	

To prove this lemma we need to recall the following :
\begin{lemma}[\cite{Az-00},  {Lemma 5}]\label{1:046}
	Let $d>1$ be a square-free integer and $\varepsilon_d=x+y\sqrt d$,
	where $x$,  $y$ are  integers or semi-integers. If $N(\varepsilon_d)=1$,  then $2(x+1)$,  $2(x-1)$,  $2d(x+1)$ and
	$2d(x-1)$ are not squares in  $\QQ$.
\end{lemma}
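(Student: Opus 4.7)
My plan is to argue by contradiction: assuming that one of the four quantities listed in the lemma is a rational square, I would show that $\varepsilon_d$ admits a square root inside $\QQ(\sqrt d)$, which directly contradicts the fact that $\varepsilon_d$ is the fundamental unit of this real quadratic field. The only input I would need is the norm equation $x^2 - dy^2 = 1$, rewritten as the factorization $(x-1)(x+1) = dy^2$.

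The first step uses this factorization to link the four quantities pairwise. Since $2(x+1)\cdot 2d(x-1) = 4d(x^2-1) = (2dy)^2$, the number $2(x+1)$ is a rational square if and only if $2d(x-1)$ is; by the same calculation, $2(x-1)$ is a square iff $2d(x+1)$ is. So it is enough to derive a contradiction from a single case, say $2(x+1) = a^2$ and $2d(x-1) = b^2$ with $a, b \in \QQ^*$ (the non-vanishing is automatic: $x=-1$ would force $y=0$ and $\varepsilon_d=-1$, which is not the fundamental unit). Then, setting $A = a/2$ and $B = y/a$, a short calculation using the norm identity gives $A^2 = (x+1)/2$, $dB^2 = (x-1)/2$ and $2AB = y$, so that $(A+B\sqrt d)^2 = x + y\sqrt d = \varepsilon_d$. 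In particular, $\sqrt{\varepsilon_d} = A + B\sqrt d$ belongs to $\QQ(\sqrt d)$.

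To finish, I would observe that $\sqrt{\varepsilon_d}$ is a root of the monic polynomial $X^2-\varepsilon_d \in \OO_{\QQ(\sqrt d)}[X]$, hence is an algebraic integer, and since $\varepsilon_d$ is a unit, so is $\sqrt{\varepsilon_d}$. Fundamentality of $\varepsilon_d$ then gives $\sqrt{\varepsilon_d} = \pm \varepsilon_d^n$ for some $n \in \ZZ$; squaring yields $\varepsilon_d^{\,2n-1} = 1$, which is impossible since $\varepsilon_d$ has infinite order. I do not foresee any real obstacle in this plan; the only care required is the bookkeeping that reduces the four hypothetical cases to one via the pairing, together with the trivial verification that $x = -1$ cannot occur.
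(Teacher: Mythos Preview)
The paper does not actually prove this lemma; it is quoted verbatim from \cite{Az-00}, so there is no in-paper argument to compare against. Your proof is correct and is the standard one: produce an explicit square root $A+B\sqrt d$ of $\varepsilon_d$ inside $\QQ(\sqrt d)$ and contradict fundamentality.

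One small cosmetic point: the product identity $2(x+1)\cdot 2d(x-1)=(2dy)^2$ pairs the four quantities into \emph{two} cases, not one. You treat only the pair $\{2(x+1),\,2d(x-1)\}$ explicitly; the other pair $\{2(x-1),\,2d(x+1)\}$ is handled by the identical computation with the roles of $x+1$ and $x-1$ swapped, and the required non-vanishing there is $x\neq 1$, which follows for the same reason as $x\neq -1$. This is routine, but worth a sentence.
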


	\begin{proof}[Proof of Lemma \ref{lm noms esp_2p}]
	As $N(\varepsilon_{2p})=1$, then $\beta^2-1=\alpha^22p$. So by   Lemma \ref{1:046}, we have 
	 ($\beta\mp1= \alpha_1^2 $  and  $\beta\pm1=  \alpha_2^22p$) for some integers $\alpha_1$ and $\alpha_2$. Thus, $2\beta=  \alpha_1^2+\alpha_2^22p$ and     $\frac 12(\alpha_1^2-2p\alpha_2^2)=(-1)^u$, for some $u$   in $\{0, 1\}$. Therefore, $2\varepsilon_{2p}=2\beta+2\alpha\sqrt{2p}=\alpha_1^22p+\alpha_2^2+2\alpha_1^2+\alpha_2^22p=(\alpha_1+\alpha_2\sqrt{2p})^2$. The reader can deduce the rest easily.
 	\end{proof}

	\begin{lemma}[\cite{azizitalbi}, Theorem  6] \label{units of k1}Let $p\equiv 1\pmod 4$ be a prime number and $k_1=\QQ(\sqrt 2,\sqrt{p})$. We have 
		\begin{enumerate}[\rm 1)]
			\item  If $N(\varepsilon_{2p})=-1$, then $\{\varepsilon_{2}, \varepsilon_{p},	\sqrt{\varepsilon_{2}\varepsilon_{p}\varepsilon_{2p}}\}$ is a fundamental system of units
			of $k_1$.
			\item  If $N(\varepsilon_{2p})=1$, then $\{\varepsilon_{2}, \varepsilon_{p},	\sqrt{\varepsilon_{2p}}\}$ is a fundamental system of units
			of $k_1$.
		\end{enumerate}
	\end{lemma}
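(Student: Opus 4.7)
The plan is to apply Wada's algorithm recalled just before Lemma \ref{lm noms esp_2p}, which says that $E_{k_1}$ is generated by $\varepsilon_{2}$, $\varepsilon_{p}$, $\varepsilon_{2p}$ together with the square roots in $k_1$ of elements of the multiplicative group $\langle -1,\varepsilon_{2},\varepsilon_{p},\varepsilon_{2p}\rangle$. Thus the task reduces to deciding, for each of the seven nontrivial triples $(a,b,c)\in\{0,1\}^3$ and each sign $\pm$, whether $\pm\varepsilon_{2}^{a}\varepsilon_{p}^{b}\varepsilon_{2p}^{c}$ is a square in $k_1$. I would organize the whole proof around this finite list of fourteen candidates, split according to whether $N(\varepsilon_{2p})=\pm 1$.

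First, I would rule out as many candidates as possible by purely multiplicative and sign-based arguments. Since $p\equiv 1\pmod 4$, we have $N(\varepsilon_{2})=N(\varepsilon_{p})=-1$, so neither $\varepsilon_{2}$ nor $\varepsilon_{p}$ is a square in $k_1$: if it were, taking norm $N_{k_1/\QQ(\sqrt{2})}$ (resp. $N_{k_1/\QQ(\sqrt{p})}$) would force $N(\varepsilon_{2})=+1$ (resp.\ $N(\varepsilon_{p})=+1$). The same norm argument, applied downward through the three intermediate quadratic subfields and keeping track of signs under the four real embeddings of $k_1$, removes every candidate except $\pm\varepsilon_{2p}$ when $N(\varepsilon_{2p})=1$, and $\pm\varepsilon_{2}\varepsilon_{p}\varepsilon_{2p}$ when $N(\varepsilon_{2p})=-1$.

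Second, for the surviving candidate I would actually exhibit the square root. In the case $N(\varepsilon_{2p})=1$, writing $\varepsilon_{2p}=\beta+\alpha\sqrt{2p}$, the classical identity $\alpha=\alpha_{1}\alpha_{2}$ with $\tfrac12(\alpha_{1}^{2}-2p\alpha_{2}^{2})=\pm 1$ used in Lemma \ref{lm noms esp_2p} yields $\sqrt{\varepsilon_{2p}}=\tfrac{1}{\sqrt2}(\alpha_{1}+\alpha_{2}\sqrt{2p})\in k_1$, and it remains to verify that $-\varepsilon_{2p}$ is not a square, which follows from Lemma \ref{1:046} applied in $\QQ(\sqrt{2p})$. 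In the case $N(\varepsilon_{2p})=-1$, the product $\varepsilon_{2}\varepsilon_{p}\varepsilon_{2p}$ is seen to be totally positive by inspecting signs under the four real embeddings; one then extracts its square root by a Kubota-type elimination, writing down an element of $k_1$ whose square is $\varepsilon_{2}\varepsilon_{p}\varepsilon_{2p}$, and rules out the six remaining products via Lemma \ref{1:046} in the subfields $\QQ(\sqrt{2})$, $\QQ(\sqrt{p})$ and $\QQ(\sqrt{2p})$.

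The main obstacle I expect is the $N(\varepsilon_{2p})=-1$ branch: the sign bookkeeping across the four embeddings has to be done carefully to show that $\varepsilon_{2}\varepsilon_{p}\varepsilon_{2p}$, and no proper sub-product, admits a square root in $k_1$. The remaining verifications reduce to routine applications of Lemma \ref{1:046} in the three real quadratic subfields, and to the formula for $\sqrt{\varepsilon_{2p}}$ already isolated in Lemma \ref{lm noms esp_2p}.
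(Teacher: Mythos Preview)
The paper does not prove this lemma; it is quoted verbatim from Azizi--Talbi \cite{azizitalbi} with no argument given. So there is no paper proof to compare against, and your sketch is essentially a reconstruction of what that reference does via Kubota's method.

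Your norm computations are correct and do exactly what you claim: when $N(\varepsilon_{2p})=1$ the only surviving candidate modulo squares is $\varepsilon_{2p}$, and when $N(\varepsilon_{2p})=-1$ the only surviving candidate is $\varepsilon_{2}\varepsilon_{p}\varepsilon_{2p}$. In the first case you are right that the explicit expression $\sqrt{\varepsilon_{2p}}=\tfrac{1}{\sqrt 2}(\alpha_1+\alpha_2\sqrt{2p})$ (as in Lemma~\ref{lm noms esp_2p}, which in fact holds for any $p\equiv 1\pmod 4$ with $N(\varepsilon_{2p})=1$) lands in $k_1$, and this settles part~2).

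The genuine gap is in part~1). You observe, correctly, that $\varepsilon_{2}\varepsilon_{p}\varepsilon_{2p}$ is totally positive and that its norm to each quadratic subfield is a square; but neither fact, nor both together, forces it to be a square in $k_1$. In general, for a real biquadratic field whose three quadratic fundamental units all have norm $-1$, the unit index can be $1$ or $2$, and it is precisely the question of whether $\varepsilon_{1}\varepsilon_{2}\varepsilon_{3}$ is a square that distinguishes them. Your phrase ``Kubota-type elimination, writing down an element of $k_1$ whose square is $\varepsilon_{2}\varepsilon_{p}\varepsilon_{2p}$'' does not name a mechanism: Kubota's elimination is exactly the norm argument you already performed, and it yields only a \emph{candidate}, not a square root. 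To close the gap one needs an extra input specific to $k_1=\QQ(\sqrt 2,\sqrt p)$, for instance a genus-theory or class-number argument (Kuroda's formula $h(k_1)=\tfrac14\,q(k_1)h(2)h(p)h(2p)$ together with the parity of $h(2p)$ when $N(\varepsilon_{2p})=-1$), or an explicit construction of the square root using the representation $p=e^2+4f^2$; this is what Azizi--Talbi supply and what your proposal is missing.
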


	Now we recall the following  useful lemmas:


	\begin{lemma}[\cite{Ku-50}]\label{wada's f.}
		Let $K$ be a multiquadratic number field of degree $2^n$, $n\in\mathds{N}$,  and $k_i$ the $s=2^n-1$ quadratic subfields of $K$. Then
		$$h(K)=\frac{1}{2^v}( E_K: \prod_{i=1}^{s}E_{k_i})\prod_{i=1}^{s}h(k_i),$$
		with $$v=\left\{ \begin{array}{cl}
			n(2^{n-1}-1); &\text{ if } K \text{ is real, }\\
			(n-1)(2^{n-2}-1)+2^{n-1}-1 & \text{ if } K \text{ is imaginary.}
		\end{array}\right.$$
	\end{lemma}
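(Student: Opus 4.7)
The natural approach is the classical analytic one due to Kuroda. Since $K/\QQ$ is abelian with Galois group $G\cong(\ZZ/2\ZZ)^{n}$, each nontrivial character of $G$ factors through a unique quadratic subfield $k_{i}$, so the Dedekind zeta function splits as
$$\zeta_{K}(z)=\zeta(z)\prod_{i=1}^{s}L(z,\chi_{i}),$$
where $\chi_{i}$ is the Kronecker character attached to $k_{i}$. Using the factorization $\zeta_{k_{i}}(z)=\zeta(z)L(z,\chi_{i})$ and comparing residues at $z=1$ gives
$$\mathrm{Res}_{z=1}\,\zeta_{K}(z)=\prod_{i=1}^{s}\mathrm{Res}_{z=1}\,\zeta_{k_{i}}(z).$$

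Next I would substitute the analytic class number formula
$$\mathrm{Res}_{z=1}\,\zeta_{F}(z)=\frac{2^{r_{1}(F)}(2\pi)^{r_{2}(F)}h(F)R(F)}{w(F)\sqrt{|d_{F}|}}$$
into both sides, applied to $F=K$ and each $F=k_{i}$. The conductor--discriminant formula for abelian extensions gives $|d_{K}|=\prod_{i}|d_{k_{i}}|$, so the discriminant factors cancel outright. Solving the resulting identity for $h(K)$ produces $\prod_{i}h(k_{i})$ multiplied by a rational factor assembled from the regulators, the signature contributions $2^{r_{1}}(2\pi)^{r_{2}}$, and the numbers of roots of unity $w$.

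The heart of the argument is then to identify this rational factor as $\frac{1}{2^{v}}(E_{K}:\prod_{i}E_{k_{i}})$. One first observes that $R(K)\cdot(E_{K}:\prod_{i}E_{k_{i}})$ equals the covolume of the sublattice of the logarithm lattice of $K$ generated by (the images of) fundamental units from the $E_{k_{i}}$. This covolume is computed as an explicit $(2^{n}-1)\times(2^{n}-1)$ determinant: the logarithm vector of a fundamental unit $\varepsilon_{k_{i}}$ is supported on the archimedean places of $K$ above those of $k_{i}$ (each splitting with multiplicity $2^{n-1}$), and its entries are $\pm\log|\varepsilon_{k_{i}}|$ with signs governed by the Galois action. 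Expanding this determinant via the character decomposition, or by iterating the biquadratic case, yields a fixed power of $2$ times $\prod_{i}R(k_{i})$.

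The principal obstacle lies in this last bookkeeping step: matching the surplus powers of $2$ that emerge from the regulator comparison, from the ratio $w(K)/\prod_{i}w(k_{i})$, and from the signature quotient $2^{r_{1}(K)}(2\pi)^{r_{2}(K)}/\prod_{i}2^{r_{1}(k_{i})}(2\pi)^{r_{2}(k_{i})}$. These behave differently in the totally real situation, where every $k_{i}$ is real and contributes a rank-one unit group, and in the CM situation, where exactly half of the $k_{i}$ are imaginary quadratic and contribute only torsion, collapsing the corresponding rows of the regulator matrix and altering both the dimensional count and the power of $2\pi$. Aggregating these discrepancies in the two regimes produces precisely the two stated exponents $v=n(2^{n-1}-1)$ and $v=(n-1)(2^{n-2}-1)+2^{n-1}-1$.
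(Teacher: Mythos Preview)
The paper does not prove this lemma at all; it is simply quoted from Kuroda's 1950 paper \cite{Ku-50} and used as a black box. So there is no ``paper's proof'' to compare against.

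Your outline is precisely the classical analytic argument that Kuroda himself gave: factor $\zeta_{K}$ via the characters of $(\ZZ/2\ZZ)^{n}$, compare residues at $z=1$ through the analytic class number formula, cancel discriminants by the conductor--discriminant formula, and then reduce everything to a regulator ratio expressed through the unit index $(E_{K}:\prod_{i}E_{k_{i}})$. The structure is correct, and you have correctly located the one genuinely nontrivial step, namely the determinant computation relating $R(K)$ to $\prod_{i}R(k_{i})$ up to a specific power of $2$.

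Two small points of precision. First, in the imaginary (CM) case the count is not ``exactly half'': among the $2^{n}-1$ quadratic subfields, the $2^{n-1}-1$ lying inside the maximal totally real subfield are real and the remaining $2^{n-1}$ are imaginary. This asymmetry is exactly what produces the more complicated exponent $(n-1)(2^{n-2}-1)+2^{n-1}-1$, so getting the count right matters when you actually carry out the bookkeeping. Second, your sketch of the regulator step (``expanding this determinant via the character decomposition, or by iterating the biquadratic case'') is where all the content lives; in a full proof you would need to make one of those two routes explicit. The cleanest modern treatment diagonalizes the logarithm matrix by the characters of $G$, which turns the $(2^{n}-1)\times(2^{n}-1)$ determinant into a product over nontrivial characters and makes the power of $2$ fall out of the normalization of the character table. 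Either way, what you have written is a faithful plan for Kuroda's proof.
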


	\begin{lemma}\label{class numbers of quadratic field}
		Let $q\equiv 3\pmod 4$ and  $p\equiv 1\pmod 4$ be two distinct  primes. 
		\begin{enumerate}[\rm 1)]
			\item By \cite[Corollary 18.4]{connor88}, we have  $h_2(p)=h_2(q)=h_2(2q)= h_2(2)=h_2(-2)=h_2(-q)=h_2(-1)=1$.
			\item If $\genfrac(){}{0}{p}{q} =-1$,   then   $h_2(pq)=h_2(2pq)=h_2(-pq)=2$, else $h_2(pq)$, $h_2(2pq)$ and $h_2(-pq)$ are divisible by $4$ $($cf. \cite[Corollaries 19.6 and 19.7]{connor88}$)$.
			\item If $ q\equiv 3\pmod 8$, then $h_2(-2q)=2$ $($cf. \cite[Corollary 19.6]{connor88}$)$.
		\end{enumerate}
	\end{lemma}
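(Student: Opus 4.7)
The three assertions are compilations of classical formulas for the $2$-class numbers of quadratic fields, and my plan is simply to extract each of them from the corollaries of \cite{connor88} already cited in the statement. The underlying machinery is genus theory (which gives the $2$-rank of the \emph{narrow} class group as ``number of ramified primes $-1$''), R\'edei's $4$-rank criterion (a Legendre-symbol condition), and the relation $h_2(d)=h_2^+(d)$ or $h_2^+(d)/2$ according as $N(\varepsilon_d)=-1$ or $+1$. A fact used repeatedly below is that any divisor of $d$ congruent to $3\pmod 4$ forces $N(\varepsilon_d)=+1$; in our setting this applies to $\varepsilon_{2q}$, $\varepsilon_{pq}$, and $\varepsilon_{2pq}$ since $q\equiv 3\pmod 4$.

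For part (1), I would split the seven fields into three groups. The fields $\QQ(\sqrt{-1})$ and $\QQ(\sqrt{\pm 2})$ have class number $1$ by the classical direct computation. For $\QQ(\sqrt{p})$, $\QQ(\sqrt{q})$, $\QQ(\sqrt{-q})$ only one prime ramifies, so genus theory gives narrow $2$-rank $0$ and hence trivial $2$-class group. The borderline case is $\QQ(\sqrt{2q})$: here two primes ramify so $h_2^+(2q)=2$, but $N(\varepsilon_{2q})=+1$ halves this to $h_2(2q)=1$. All of these values are precisely what \cite[Corollary 18.4]{connor88} records.

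For (2), I would apply genus theory to $\QQ(\sqrt{pq})$, $\QQ(\sqrt{2pq})$, and $\QQ(\sqrt{-pq})$, which produces narrow $2$-ranks $2$, $2$, and $1$ respectively. R\'edei's criterion then says that the $4$-rank vanishes precisely when $\genfrac(){}{0}{p}{q}=-1$; combined with $N(\varepsilon_{pq})=N(\varepsilon_{2pq})=+1$ from the paragraph above, this produces $h_2=2$ in each case, and divisibility by $4$ in the opposite case. Part (3) is the same argument applied to $\QQ(\sqrt{-2q})$: the hypothesis $q\equiv 3\pmod 8$ is exactly what makes the associated R\'edei matrix nondegenerate, yielding $h_2(-2q)=2$. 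The only real obstacle throughout is bookkeeping: one must verify that each individual field falls into exactly the case tabulated by the relevant corollary of \cite{connor88}, which is a case-check against the congruences $p\equiv 1\pmod 4$ and $q\equiv 3\pmod 4$ (strengthened to $q\equiv 3\pmod 8$ in (3)).
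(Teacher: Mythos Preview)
The paper itself supplies no proof of this lemma: each assertion is simply quoted from the indicated corollary of \cite{connor88}, so there is nothing to compare your argument against beyond the bare citations. Your sketch of the genus theory and R\'edei $4$-rank machinery underlying those corollaries is sound in outline and goes well beyond what the paper does.

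There is, however, one factual slip worth correcting. For $q\equiv 3\pmod 4$ the field $\QQ(\sqrt{q})$ has discriminant $4q$, so \emph{two} primes ramify (namely $2$ and $q$), not one; this case belongs alongside $\QQ(\sqrt{2q})$ rather than with $\QQ(\sqrt{p})$ and $\QQ(\sqrt{-q})$. The conclusion $h_2(q)=1$ is of course still correct, via exactly the mechanism you describe for $\QQ(\sqrt{2q})$: narrow $2$-rank equal to $1$, narrow $4$-rank equal to $0$, and $N(\varepsilon_q)=+1$ halving $h_2^+$ to $h_2$. A related minor point: the inference ``two primes ramify, so $h_2^+(2q)=2$'' tacitly uses that the $4$-rank vanishes, which is true here but is not a consequence of the $2$-rank alone. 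Neither issue affects the overall plan.
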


	\section{\bf Unit groups computation}

 	 	\subsection{\bf The case: $p\equiv1\pmod 8$, $ q \equiv7\pmod8$  and 
 	 	$\genfrac(){}{0}{p}{q} =-1$.}\text{\;}\\
 	 The following lemmas   are very useful in what follows to prove our first main theorem.
 	 
 	\begin{lemma}\label{lm expressions of units under cond 2}
		Let        $p\equiv1\pmod 8$ and $ q \equiv7\pmod8$ be two primes such that
		$\genfrac(){}{0}{p}{q} =-1$.
		\begin{enumerate}[\rm 1)]
			\item Let  $x$ and $y$   be two integers such that
			$ \varepsilon_{2pq}=x+y\sqrt{2pq}$. Then  
			\begin{enumerate}[\rm i.]
				\item $(x+1)$ is a square in $\NN$, 
				\item $\sqrt{2\varepsilon_{2pq}}=y_1+y_2\sqrt{2pq}$ and 	$2= y_1^2-2pqy_2^2$, for some integers $y_1$ and $y_2$.
			\end{enumerate}
			\item  Let    $v$ and $w$ be two integers such that
			$ \varepsilon_{pq}=v+w\sqrt{pq}$. Then   
			\begin{enumerate}[\rm i.]
				\item $(v+1)$ is a square in $\NN$, 
				\item   $\sqrt{ 2\varepsilon_{ pq}}=w_1+w_2\sqrt{pq}$ and 	$2=w_1^2-pqw_2^2$, for some integers $w_1$ and $w_2$.
			\end{enumerate}
		\end{enumerate}
	\end{lemma}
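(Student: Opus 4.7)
The plan is to exploit the Pell equations $x^2 - 2pq\, y^2 = 1$ and $v^2 - pq\, w^2 = 1$, both of which hold because $q \equiv 3 \pmod 4$ forces $N(\varepsilon_{2pq}) = N(\varepsilon_{pq}) = +1$ (otherwise $-1$ would be a quadratic residue mod $q$). The hypotheses on $p$ and $q$ supply the Legendre symbol identities $\left(\tfrac{p}{q}\right) = \left(\tfrac{q}{p}\right) = -1$ (the second by quadratic reciprocity, since $p \equiv 1 \pmod 4$), $\left(\tfrac{2}{p}\right) = \left(\tfrac{2}{q}\right) = 1$, $\left(\tfrac{-1}{p}\right) = 1$, and $\left(\tfrac{-1}{q}\right) = -1$. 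Combined with Lemma \ref{1:046}, these identities will rule out every coprime factorization of $(x-1)(x+1)$ (respectively $(v-1)(v+1)$) except the one yielding the claimed square.

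For part 1, a mod $8$ computation shows $x$ is odd and $y$ is even; setting $y = 2y'$, $x - 1 = 2s$, $x + 1 = 2t$ with $\gcd(s, t) = 1$ gives $st = 2pq (y')^2$, whose eight coprime factorizations correspond to distributing the squarefree primes $2, p, q$ between $s$ and $t$. Lemma \ref{1:046} kills the two cases in which $s$ or $t$ is a perfect square, leaving six. I then eliminate each of the five non-target configurations by computing a Legendre symbol in two different ways: using the congruence $x \equiv \pm 1 \pmod p$ that comes from $p \mid x \mp 1$ (and analogously mod $q$), and using the explicit factored form of $x \pm 1$. For example, in $(s, t) = (pA^2, 2qB^2)$, $x \equiv 1 \pmod p$ gives $\left(\tfrac{x+1}{p}\right) = \left(\tfrac{2}{p}\right) = 1$, whereas the factorization gives $\left(\tfrac{x+1}{p}\right) = \left(\tfrac{4qB^2}{p}\right) = \left(\tfrac{q}{p}\right) = -1$, a contradiction. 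The only survivor is $x - 1 = 2pq A^2$, $x + 1 = (2B)^2$, which is (i). Setting $y_1 = 2B$ and $y_2 = A$ and expanding gives $(y_1 + y_2\sqrt{2pq})^2 = 2x + 2y\sqrt{2pq} = 2\varepsilon_{2pq}$ and $y_1^2 - 2pq\, y_2^2 = (x + 1) - (x - 1) = 2$, which is (ii).

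For part 2, the analysis bifurcates on the parity of $v$: since $pq \equiv 7 \pmod 8$, the Pell equation forces either $v$ odd (with $w$ even) or $v \equiv 0 \pmod 4$ (with $w$ odd). In the odd-$v$ branch, writing $v \mp 1 = 2s, 2t$ yields $st = pq (w/2)^2$, whose four coprime factorizations split as two killed by Lemma \ref{1:046} and two killed by the same Legendre argument as in part 1; this branch is therefore impossible. In the even-$v$ branch, $v - 1$ and $v + 1$ are coprime and odd, and $(v - 1)(v + 1) = pq w^2$ directly admits only the four distributions of $\{p, q\}$; three are eliminated by Legendre symbols, leaving $v - 1 = pq Y^2$ and $v + 1 = X^2$, which is (i). Setting $w_1 = X$, $w_2 = Y$ and expanding gives $(w_1 + w_2\sqrt{pq})^2 = 2\varepsilon_{pq}$ with $w_1^2 - pq\, w_2^2 = 2$, which is (ii).

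The main obstacle is the bookkeeping in the case analysis: making sure every coprime factorization is enumerated (eight for part 1, four for each parity branch of part 2), that Lemma \ref{1:046} is applied with the correct choice of $d$ in each setting, and that the Legendre contradictions are derived consistently. The strategic shortcut that makes each sub-case almost mechanical is always to reduce modulo the prime ($p$ or $q$) that divides one of $x \pm 1$ (or $v \pm 1$); this forces the other factor to be congruent to $\pm 2$, which, combined with the explicit factored form, yields a clean and immediate quadratic residue comparison.
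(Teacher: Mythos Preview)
Your proof is correct and follows essentially the same strategy as the paper: factor $(x-1)(x+1)$ (respectively $(v-1)(v+1)$) via unique factorization, invoke Lemma~\ref{1:046} to discard the degenerate configurations, and eliminate every remaining non-target case by a Legendre-symbol contradiction modulo $p$ or $q$. Your write-up is more explicit than the paper's---you carry out the parity analysis and treat part~2 in full, whereas the paper handles only part~1 and declares part~2 ``analogous''---but the underlying argument is identical.
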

 \begin{proof}
 		As it is known that $N(\varepsilon_{2pq})=1$, then, by the unique factorization in $\mathbb{Z}$ and Lemma \ref{1:046}  there exist some integers $y_1$ and $y_2$  $(y=y_1y_2)$ such that
 	$$(1):\ \left\{ \begin{array}{ll}
 		x\pm1=y_1^2\\
 		x\mp1=2pqy_2^2,
 	\end{array}\right. \quad
 	(2):\ \left\{ \begin{array}{ll}
 		x\pm1=py_1^2\\
 		x\mp1=2qy_2^2,
 	\end{array}\right.\quad
 	\text{ or }\quad
 	(3):\ \left\{ \begin{array}{ll}
 		x\pm1=2py_1^2\\
 		x\mp1=qy_2^2,
 	\end{array}\right.
 	$$		
 	\begin{enumerate}[\rm$*$]
 		\item System $(2)$ can not occur since it implies  
 		$-1=\left(\frac{2qy_1^2}{p}\right)=\left(\frac{x\mp 1}{p}\right)=\left(\frac{x\pm1\mp 2}{p}\right)=\left(\frac{\mp2}{p}\right)=\left(\frac{2}{p}\right)=1$, which is absurd. 
 		\item We similarly show that  System $(3)$ 
 		can not occur. 
 		
 		\item Assume that $\left\{ \begin{array}{ll}
 		x-1=y_1^2\\
 		x+1=2pqy_2^2
 	\end{array}\right. .$ So 	$1=\left(\frac{ y_1^2}{q}\right)=\left(\frac{x- 1}{q}\right)=\left(\frac{x+1- 2}{q}\right)=\left(\frac{-2}{q}\right) =-1$, which is a contradiction.	
 		
 	\end{enumerate}

  	Therefore $\left\{ \begin{array}{ll}
 		x+1=y_1^2\\
 		x-1=2pqy_2^2
 	\end{array}\right.$ which gives the first item. The proof of the second item is analogous.
 \end{proof}

	\begin{lemma}\label{lm expressions of units q_2q}
		Let         $ q \equiv7\pmod8$ be a prime number.
		\begin{enumerate}[\rm 1)]
			\item  Let    $c $ and $d$ be two integers such that
			$ \varepsilon_{2q}=c +d\sqrt{2q}$. Then    
			\begin{enumerate}[\rm i.]
				\item   $c+1$ is a square in $\NN$,
				\item  $\sqrt{ 2\varepsilon_{  2q}}=d_1 +d_2\sqrt{2q}$ and 	$2=d_1^2-2qd_2^2$, for some integers $d_1$ and $d_2$.
			\end{enumerate}
			\item  Let    $\alpha $ and $\beta$ be two integers such that
			$ \varepsilon_{q}=\alpha +\beta\sqrt{q}$. Then    
			\begin{enumerate}[\rm i.]
				\item   $\alpha+1$ is a square in $\NN$,
				\item  $\sqrt{ 2\varepsilon_{  q}}=\beta_1 +\beta_2\sqrt{q}$ and 	$2=\beta_1^2-q\beta_2^2$, for some integers $\beta_1$ and $\beta_2$.
			\end{enumerate}
		\end{enumerate}
		Furthermore, for any prime number $p\equiv 1\pmod 4$ we have: 
		\begin{eqnarray}\label{norm q=7 p=1mod4q} 
			\begin{tabular}{ |c|c|c|c|c|c|c|c|c|}
				\hline
				$\varepsilon$&$\varepsilon_{2}$ &$ {\varepsilon_{p}}$&$\sqrt{\varepsilon_{q}}$&$\sqrt{\varepsilon_{2q}}$          \\
				\hline 
				$\varepsilon^{1+\tau_1}$&$-1$ &$\varepsilon_{p}^2$&	$-\varepsilon_{q}$&$-1$\\
				\hline
				$\varepsilon^{1+\tau_2}$&$\varepsilon_{2}^2$ &$-1$&	$\varepsilon_{q}$&$\varepsilon_{2q}$\\
				\hline
				$\varepsilon^{1+\tau_3}$&$\varepsilon_{2}^2$ &$\varepsilon_{p}^2$& $1$&$1$\\
				\hline
				$\varepsilon^{1+\tau_1\tau_2}$&$-1$ &$-1$&	$-\varepsilon_{q}$&$-1$\\
				\hline
				$\varepsilon^{1+\tau_1\tau_3}$&$-1$ &$\varepsilon_{p}^2$&	$-1$&$-\varepsilon_{2q}$ \\
				\hline
				$\varepsilon^{1+\tau_2\tau_3}$&$\varepsilon_{2}^2$ &$-1$&	$1$&$1$\\
				\hline
			\end{tabular} 
		\end{eqnarray}
	\end{lemma}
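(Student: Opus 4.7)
The plan is to mimic the proof of Lemma \ref{lm expressions of units under cond 2}. Since $q \equiv 3 \pmod 4$, both $\varepsilon_{q}$ and $\varepsilon_{2q}$ have norm $+1$, so I start from $c^2 - 2qd^2 = 1$ (resp.\ $\alpha^2 - q\beta^2 = 1$), factor $(c-1)(c+1) = 2qd^2$ (resp.\ $(\alpha-1)(\alpha+1) = q\beta^2$), and enumerate the coprime decompositions of the right-hand side. Lemma \ref{1:046} kills most of them by forbidding $2(x \pm 1)$ and $4q(x \pm 1)$ from being perfect squares, and the remaining ``wrong sign'' possibility is excluded by $\left(\frac{-2}{q}\right) = -1$ --- this is precisely where the hypothesis $q \equiv 7 \pmod 8$ is needed, since it gives $\left(\frac{-1}{q}\right) = -1$ and $\left(\frac{2}{q}\right) = 1$.

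For item 1), $c$ must be odd (else $c^2 - 1$ is odd), so $d$ is even and the coprime factors $(c-1)/2$, $(c+1)/2$ have product $2qd_0^2$. Of the four splittings indexed by coprime divisors of $2q$, Lemma \ref{1:046} excludes the two in which one factor of $c \pm 1$ equals $2\cdot(\text{square})$, leaving the systems $\{c-1 = 4a^2,\ c+1 = 2qb^2\}$ and $\{c-1 = 2qa^2,\ c+1 = 4b^2\}$. Reducing the first modulo $q$ gives $4a^2 \equiv -2 \pmod q$, i.e.\ $\left(\frac{-2}{q}\right) = 1$, contradicting the above. Hence $c+1 = (2b)^2$ is a square, $c - 1 = 2qa^2$, and $d = 2ab$, so
\[
2\varepsilon_{2q} = (c+1)+(c-1)+2d\sqrt{2q} = 4b^2 + 2qa^2 + 4ab\sqrt{2q} = (2b + a\sqrt{2q})^2
\]
proves (ii) with $(2b)^2 - 2qa^2 = 2$. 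Item 2) is identical in structure; the one new feature is that the case ``$\alpha$ odd'' must be eliminated at the outset, since every coprime factorization of $((\alpha-1)/2)\cdot((\alpha+1)/2) = q\gamma^2$ (writing $\beta = 2\gamma$) would then force one of $2(\alpha \pm 1)$ to be a perfect square, contradicting Lemma \ref{1:046}. Thus $\alpha$ is even and the same reciprocity argument yields $\alpha+1 = b^2$ and $\sqrt{2\varepsilon_q} = b + a\sqrt q$ with $b^2 - qa^2 = 2$.

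For the Galois-action table, I would use the explicit expressions $\sqrt{\varepsilon_q} = (\beta_1 + \beta_2\sqrt q)/\sqrt 2$ and $\sqrt{\varepsilon_{2q}} = (d_1 + d_2\sqrt{2q})/\sqrt 2$ from (ii), apply each $\tau_i$ directly, and simplify products such as $\sqrt{\varepsilon_q}\cdot\tau_3(\sqrt{\varepsilon_q}) = (\beta_1^2 - q\beta_2^2)/2 = 1$ via the norm identities $\beta_1^2 - q\beta_2^2 = 2$ and $d_1^2 - 2qd_2^2 = 2$ just established. The entries for $\varepsilon_2$ and $\varepsilon_p$ follow from $N(\varepsilon_2) = -1$ and the standard fact $N(\varepsilon_p) = -1$ for $p \equiv 1 \pmod 4$, which force the nontrivial conjugates to equal $-\varepsilon^{-1}$. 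The main obstacle is sign-bookkeeping across the twenty-four entries; the arithmetic is routine once the two square-root formulas are in hand.
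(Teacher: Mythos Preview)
Your argument is correct. The case analysis for $\varepsilon_{2q}$ and $\varepsilon_q$ is sound: Lemma~\ref{1:046} rules out the two systems producing $2(c\pm1)$ or $2(\alpha\pm1)$ a perfect square, and the quadratic-residue obstruction $\left(\frac{-2}{q}\right)=-1$ (valid precisely because $q\equiv 7\pmod 8$) eliminates the remaining wrong case. Your elimination of odd $\alpha$ in item~2) is also correct, and the verification of the table via the explicit formulas $\sqrt{\varepsilon_q}=(\beta_1+\beta_2\sqrt q)/\sqrt2$ and $\sqrt{\varepsilon_{2q}}=(d_1+d_2\sqrt{2q})/\sqrt2$ goes through exactly as you describe.

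The paper, by contrast, does not prove the two items at all: it simply cites \cite[Lemma~4.1]{chemszekhniniaziziUnits1} and remarks that the table follows from direct computation. So your proposal is strictly more self-contained than the paper's own treatment. Methodologically it is not a different route---you are replicating the template of Lemma~\ref{lm expressions of units under cond 2}, which is presumably what the cited reference does as well---but it has the merit of keeping the argument inside the present paper rather than deferring to an external source.
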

	\begin{proof}
		For   the two items see \cite[Lemma 4.1]{chemszekhniniaziziUnits1}. The computations in the table follows from the definitions of $\tau_i$ and the two items.
	\end{proof}

	\begin{theorem}\label{T_7_-1} Let $p\equiv 1\pmod{8}$ and $q\equiv7\pmod 8$ be two primes such that     $\genfrac(){}{0}{p}{q} =-1$. 
		Put     $\KK=\QQ(\sqrt 2, \sqrt{p}, \sqrt{q} )$.  
		\begin{enumerate}[\rm 1)]
			\item If $N(\varepsilon_{2p})=-1$,  then
			\begin{enumerate}[\rm $\bullet$]
				
				\item The unit group of $\KK$ is :
				$$E_{\KK}=\langle -1,  \varepsilon_{2}, \varepsilon_{p},   \sqrt{\varepsilon_{q}}, \sqrt{\varepsilon_{2q}},  \sqrt{\varepsilon_{pq}} , \sqrt{\varepsilon_{2}\varepsilon_{p}\varepsilon_{2p}}, 
				\sqrt{\sqrt{\varepsilon_{q}} \sqrt{\varepsilon_{2q}} \sqrt{\varepsilon_{pq}} \sqrt{\varepsilon_{2pq}}}     \rangle.$$
				\item The $2$-class group of  $\KK$ is cyclic of order  $\frac 12h_2(2p)$. 
			\end{enumerate}
			\item If $N(\varepsilon_{2p})=1$,   then
			
			\begin{enumerate}[\rm $\bullet$]
				\item The unit group of $\KK$ is :
				$$E_{\KK}=\langle -1,     \varepsilon_{2}, \varepsilon_{p},   \sqrt{\varepsilon_{q}}, \sqrt{\varepsilon_{2q}},  \sqrt{\varepsilon_{pq}} , \sqrt{\varepsilon_{2}^a\varepsilon_{p}^a \sqrt{\varepsilon_{q}} \sqrt{\varepsilon_{pq}}
					\sqrt{\varepsilon_{2p}}},
				\sqrt{\varepsilon_{2}^a\varepsilon_{p}^a  \sqrt{\varepsilon_{2q}} \sqrt{\varepsilon_{2pq}}
					\sqrt{\varepsilon_{2p}}}   \rangle,$$
				where $a\in\{0,1\}$ such that $a\equiv u+1\pmod 2$ and $u$ is defined in Lemma \ref{lm noms esp_2p}.
				\item  The $2$-class group of  $\KK$ is cyclic of order  $h_2(2p)$.
			\end{enumerate}
		\end{enumerate}
	\end{theorem}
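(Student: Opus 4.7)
The overall plan is to apply the Wada-type algorithm set out on page~\pageref{fsu preparations} to the biquadratic subfields $k_1,k_2,k_3$: a fundamental system of units of $\KK$ consists of seven elements drawn from fundamental systems of units of $k_1,k_2,k_3$, together with the square roots of those products in $E_{k_1}E_{k_2}E_{k_3}$ which happen to be squares in $\KK$. The $2$-class number is then read off from the class number formula (Lemma~\ref{wada's f.}) once the unit index is known.

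First, I would nail down fundamental systems of units for $k_1,k_2,k_3$. Lemma~\ref{units of k1} gives the two possible answers for $k_1$ according to the sign of $N(\varepsilon_{2p})$, and this sign is exactly what separates the two items of the theorem. For $k_2=\QQ(\sqrt 2,\sqrt q)$ and $k_3=\QQ(\sqrt 2,\sqrt{pq})$, Lemmas~\ref{lm expressions of units q_2q} and~\ref{lm expressions of units under cond 2} supply the key identities saying that $2\varepsilon_q$, $2\varepsilon_{2q}$, $2\varepsilon_{pq}$ and $2\varepsilon_{2pq}$ are squares in the respective real quadratic fields; the standard biquadratic argument (as in the proof of Lemma~\ref{units of k1}) then gives that $\{\varepsilon_2,\sqrt{\varepsilon_q},\sqrt{\varepsilon_{2q}}\}$ and $\{\varepsilon_2,\sqrt{\varepsilon_{pq}},\sqrt{\varepsilon_{2pq}}\}$ are fundamental systems of units of $k_2$ and $k_3$. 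Combined with Lemma~\ref{units of k1}, this produces the seven candidate generators $\varepsilon_2,\varepsilon_p,\sqrt{\varepsilon_q},\sqrt{\varepsilon_{2q}},\sqrt{\varepsilon_{pq}},\sqrt{\varepsilon_{2pq}}$ plus the third generator $\eta$ of $E_{k_1}$ depending on $N(\varepsilon_{2p})$.

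The core of the proof is the enumeration of which products of these generators have square roots in $\KK$. I would parametrise a typical product as
\[
\xi=\varepsilon_2^{a_1}\varepsilon_p^{a_2}\sqrt{\varepsilon_q}^{\,a_3}\sqrt{\varepsilon_{2q}}^{\,a_4}\sqrt{\varepsilon_{pq}}^{\,a_5}\sqrt{\varepsilon_{2pq}}^{\,a_6}\,\eta^{a_7},\qquad a_i\in\{0,1\},
\]
and test when $\xi\in\KK^{\times 2}$. The criterion $\xi^{1+\sigma}\in\KK^{\times 2}$ for each $\sigma\in\mathrm{Gal}(\KK/\QQ(\sqrt 2))$ converts this into a small linear system over $\ZZ/2\ZZ$ in the $a_i$, with coefficients extracted from the tables in Lemmas~\ref{lm noms esp_2p} and~\ref{lm expressions of units q_2q}, supplemented by their analogues for $\varepsilon_{pq},\varepsilon_{2pq}$ derived from Lemma~\ref{lm expressions of units under cond 2}. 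Solving this system leaves only the candidates appearing in the theorem; for each survivor an explicit square root in $\KK$ is produced by the usual device of computing $\xi\cdot\xi^\tau$ in the appropriate biquadratic subfield and invoking the ``$v+1=\square$'' conclusions of Lemmas~\ref{lm expressions of units q_2q} and~\ref{lm expressions of units under cond 2}. The main obstacle is precisely this enumeration step: the bookkeeping of the sign tables is delicate, and case~(2) is subtler because the parameter $u$ from Lemma~\ref{lm noms esp_2p} forces the exponent $a$ to enter the explicit generators.

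Finally, for the class number I would apply Lemma~\ref{wada's f.} with $n=3$ and $v=9$. Lemma~\ref{class numbers of quadratic field} gives $h_2(2)=h_2(p)=h_2(q)=h_2(2q)=1$ and $h_2(pq)=h_2(2pq)=2$, so the product of the seven quadratic class numbers equals $4h_2(2p)$. The unit index $[E_{\KK}:\prod_{i=1}^{7}E_{k_i}]$ is computed from the fundamental system found above by a direct determinant calculation, yielding $2^{6}$ in case~(1) and $2^{7}$ in case~(2); substituting produces $h_2(\KK)=\tfrac12 h_2(2p)$ and $h_2(\KK)=h_2(2p)$ respectively. The cyclicity of $\mathrm{Cl}_2(\KK)$ would then follow from a $2$-rank computation, combining the known cyclicity of $\mathrm{Cl}_2(\QQ(\sqrt{2p}))$ (genus theory for real quadratic fields of two prime discriminants) with a control of the $2$-rank of $\mathrm{Cl}_2(\KK)$ via the ambiguous class number formula applied to a suitable quadratic subextension of $\KK$.
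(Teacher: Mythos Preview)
Your overall strategy (Wada's algorithm applied to $k_1,k_2,k_3$, then norm constraints, then Lemma~\ref{wada's f.}) matches the paper. The norm-map elimination you describe is exactly what the paper does, and it correctly cuts the candidates down to $\xi=\sqrt{\varepsilon_q}\sqrt{\varepsilon_{2q}}\sqrt{\varepsilon_{pq}}\sqrt{\varepsilon_{2pq}}$ in case~(1) and the two $\sqrt{\varepsilon_{2p}}$-products in case~(2).

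The gap is in the step where you claim to verify that each surviving candidate \emph{is} a square in $\KK$. The norm conditions are only necessary, and your proposed ``usual device of computing $\xi\cdot\xi^\tau$ and invoking the $v+1=\square$ conclusions'' does not obviously produce an explicit square root of, say, $\sqrt{\varepsilon_q}\sqrt{\varepsilon_{2q}}\sqrt{\varepsilon_{pq}}\sqrt{\varepsilon_{2pq}}$: those lemmas give $2\varepsilon_q,2\varepsilon_{2q},2\varepsilon_{pq},2\varepsilon_{2pq}\in(\KK^\times)^2$, which is already used up in getting the $\sqrt{\varepsilon_\bullet}$'s into $\KK$, and does not by itself force the fourth root to lie in $\KK$. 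The paper does \emph{not} construct an explicit square root. Instead it argues indirectly: it first computes $h_2(\KK)$ independently by observing that $\KK/k_5$ with $k_5=\QQ(\sqrt q,\sqrt{2p})$ is an unramified quadratic extension and that $\mathrm{Cl}_2(k_5)$ is cyclic of order $h_2(2p)$ (the latter via Lemma~\ref{wada's f.} applied to $k_5$), hence $h_2(\KK)=\tfrac12 h_2(2p)$. It then feeds this value back into Lemma~\ref{wada's f.} for $\KK$: if the candidate were \emph{not} a square one would get $q(\KK)=2^5$ and $h_2(\KK)=\tfrac14 h_2(2p)$, a contradiction. The same mechanism, with $h_2(\KK)=h_2(2p)$ forcing $q(\KK)=2^7$, handles case~(2). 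So the logical flow is the reverse of what you propose: the class number is computed first (via $k_5$) and then used to pin down the unit index, not the other way around.

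The cyclicity of $\mathrm{Cl}_2(\KK)$ also comes out of this same $\KK/k_5$ argument (an unramified quadratic step over a field with cyclic $2$-class group), rather than from a separate ambiguous-class-number computation as you suggest.
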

	\begin{proof}
		
		\begin{enumerate}[\rm 1)]
			\item  
			Assume that   $N(\varepsilon_{2p})=-1$.  By Lemma \ref{units of k1},     $\{\varepsilon_{2}, \varepsilon_{p},	\sqrt{\varepsilon_{2}\varepsilon_{p}\varepsilon_{2p}}\}$  is a  fundamental system of units of $k_1$. Using Lemmas \ref{lm expressions of units q_2q} and \ref{lm expressions of units under cond 2}, we check   that $\{\varepsilon_{2}, \sqrt{\varepsilon_{q}}, \sqrt{\varepsilon_{2q}}\}$ and $\{ \varepsilon_{2}, 	\sqrt{\varepsilon_{pq}},\sqrt{\varepsilon_{2pq}}\}$ are respectively fundamental systems of units of $k_2$ and $k_3$.
			It follows that,  	$$E_{k_1}E_{k_2}E_{k_3}=\langle-1,  \varepsilon_{2}, \varepsilon_{p},   \sqrt{\varepsilon_{q}}, \sqrt{\varepsilon_{2q}},  \sqrt{\varepsilon_{pq}} ,\sqrt{ \varepsilon_{2pq}}, \sqrt{\varepsilon_{2}\varepsilon_{p}\varepsilon_{2p}}\rangle.$$	
			Thus we shall determine elements of $E_{k_1}E_{k_2}E_{k_3}$ which are squares in $\KK$. Let  $\xi$ be an element of $\KK$ which is a  square root of an element of $E_{k_1}E_{k_2}E_{k_3}$. We can assume that
			$$\xi^2=\varepsilon_{2}^a\varepsilon_{p}^b \sqrt{\varepsilon_{q}}^c\sqrt{\varepsilon_{2q}}^d\sqrt{\varepsilon_{pq}}^e\sqrt{\varepsilon_{2pq}}^f
			\sqrt{\varepsilon_{2}\varepsilon_{p}\varepsilon_{2p}}^g,$$
			where $a, b, c, d, e, f$ and $g$ are in $\{0, 1\}$.

			\noindent\ding{224}  Let us start	by applying   the norm map $N_{\KK/k_2}=1+\tau_2$. We have 
			$\sqrt{\varepsilon_{pq}}^{1+ \tau_2}=1$,
			$\sqrt{\varepsilon_{2pq}}^{1+ \tau_2}=1$ and $\sqrt{\varepsilon_{2}\varepsilon_{p}\varepsilon_{2p}}^{1+ \tau_2}=(-1)^v\varepsilon_{2}$, for some $v\in\{0,1\}$. Thus, by the table \eqref{norm q=7 p=1mod4q} we have:
			\begin{eqnarray*}
				N_{\KK/k_2}(\xi^2)&=&
				\varepsilon_{2}^{2a}\cdot(-1)^b \cdot \varepsilon_{q}^c\cdot\varepsilon_{2q}^d\cdot1\cdot1 \cdot (-1)^{gv}\varepsilon_{2}^g\\
				&=&	\varepsilon_{2}^{2a}  \varepsilon_{q}^c\varepsilon_{2q}^d\cdot(-1)^{b+gv} \varepsilon_{2}^g.
			\end{eqnarray*}
			
			Thus    $b+gv\equiv 0\pmod2$ and $g=0$. Therefore,  $b=0$ and
			$$\xi^2=\varepsilon_{2}^a  \sqrt{\varepsilon_{q}}^c\sqrt{\varepsilon_{2q}}^d\sqrt{\varepsilon_{pq}}^e\sqrt{\varepsilon_{2pq}}^f.$$

			\noindent\ding{224} Let us apply the norm $N_{\KK/k_5}=1+\tau_1\tau_2$, with $k_5=\QQ(\sqrt{q}, \sqrt{2p})$. We have $\sqrt{\varepsilon_{pq}}^{1+ \tau_1\tau_2}=-1$ 
			and $\sqrt{\varepsilon_{2pq}}^{1+\tau_1\tau_2}={-\varepsilon_{2pq}}$. Thus, by the table \eqref{norm q=7 p=1mod4q} we have:
			\begin{eqnarray*}
				N_{\KK/k_5}(\xi^2)&=&(-1)^a\cdot (-1)^c\cdot \varepsilon_{  q}^c\cdot(-1)^d\cdot(-1)^e\cdot (-1)^f\cdot \varepsilon_{  2pq}^f\\
				&=&	 (-1)^{a+c+d+e+f}\varepsilon_{  q}^c\cdot \varepsilon_{  2pq}^f.
			\end{eqnarray*}
			Thus $a+c+d+e+f=0\pmod 2$ and $f=c$. Thus, $a+d+e=0\pmod 2$. Therefore, 
			$$\xi^2=\varepsilon_{2}^a  \sqrt{\varepsilon_{q}}^c\sqrt{\varepsilon_{2q}}^d\sqrt{\varepsilon_{pq}}^e\sqrt{\varepsilon_{2pq}}^c.$$

			\noindent\ding{224} Let us apply the norm $N_{\KK/k_6}=1+\tau_1\tau_3$, with $k_6=\QQ(\sqrt{p}, \sqrt{2q})$. We have    $\sqrt{\varepsilon_{pq}}^{1+ \tau_1\tau_3}=-1$ 
			and $\sqrt{\varepsilon_{2pq}}^{1+\tau_1\tau_3}={-\varepsilon_{2pq}}$. Thus, by the table \eqref{norm q=7 p=1mod4q} we have:
			\begin{eqnarray*}
				N_{\KK/k_5}(\xi^2)&=&(-1)^a\cdot (-1)^c\cdot (-1)^d\cdot \varepsilon_{2  q}^d\cdot (-1)^e\cdot(-1)^c\cdot \varepsilon_{  2pq}^c\\
				&=&	 (-1)^{a+d+e}\varepsilon_{2  q}^d\varepsilon_{  2pq}^c.
			\end{eqnarray*}
			Thus $a+d+e=0\pmod 2$ and 
			$d=c$. Therefore   
			$$\xi^2=\varepsilon_{2}^a  \sqrt{\varepsilon_{q}}^c\sqrt{\varepsilon_{2q}}^c\sqrt{\varepsilon_{pq}}^e\sqrt{\varepsilon_{2pq}}^c.$$
			
			\noindent\ding{224} Let us apply the norm $N_{\KK/k_3}=1+\tau_2\tau_3$, with $k_3=\QQ(\sqrt{2}, \sqrt{pq})$.  We have    $\sqrt{\varepsilon_{pq}}^{1+ \tau_2\tau_3}={\varepsilon_{pq}}$ 
			and $\sqrt{\varepsilon_{2pq}}^{1+\tau_2\tau_3}={ \varepsilon_{2pq}}$. Thus, by the table  \eqref{norm q=7 p=1mod4q} we have: 
			\begin{eqnarray*}
				N_{\KK/k_3}(\xi^2)&=&\varepsilon_{2}^{2a}\cdot   1\cdot 1\cdot \varepsilon_{pq}^e \cdot \varepsilon_{  2pq}^c\\
				&=& \varepsilon_{2}^{2a} \varepsilon_{pq}^c   \varepsilon_{  2pq}^c.
			\end{eqnarray*}
			We have nothing to deduce from this. Therefore, we apply another norm.

			\noindent\ding{224} Let us apply the norm $N_{\KK/k_4}=1+\tau_1$, with $k_4=\QQ(\sqrt{p}, \sqrt{q})$.  We have    $\sqrt{\varepsilon_{pq}}^{1+ \tau_1}={-\varepsilon_{pq}}$ 
			and $\sqrt{\varepsilon_{2pq}}^{1+\tau_1}=-1$. Thus, by the table \eqref{norm q=7 p=1mod4q} we have: 
			\begin{eqnarray*}
				N_{\KK/k_5}(\xi^2)&=&(-1)^a\cdot (-1)^c\cdot \varepsilon_{  q}^c\cdot (-1)^c\cdot (-1)^e\cdot \varepsilon_{   pq}^e\cdot(-1)^c\cdot \\
				&=&	 (-1)^{a+c+e} \varepsilon_{  q}^c\varepsilon_{   pq}^e.
			\end{eqnarray*}
			Thus $a+c+e=0\pmod 2$ and $c=e$. Hence, $a=0$ and
			$$\xi^2=   \sqrt{\varepsilon_{q}}^c\sqrt{\varepsilon_{2q}}^c\sqrt{\varepsilon_{pq}}^c\sqrt{\varepsilon_{2pq}}^c.$$
		
		
		Let us show that  the square root of $\sqrt{\varepsilon_{q}}\sqrt{\varepsilon_{2q}}\sqrt{\varepsilon_{pq}}\sqrt{\varepsilon_{2pq}}$ is an element of $\KK$.	
		Note that one can easily check that the  $2$-class group of $k_5=\mathbb Q(\sqrt{2p}, \sqrt{q})$ is cyclic and by   Lemmas  \ref{wada's f.} and \ref{class numbers of quadratic field}, we have   $h_2(k_5)=\frac{1}{4}q(k_5)h_2(2p)h_2(q)h_2(2pq)=\frac{1}{2}q(k_5)h_2(2p) $.
		Using Lemmas \ref{lm expressions of units under cond 2} and \ref{lm expressions of units q_2q} (and the algorithm given in page  \pageref{algo wada}), we easily deduce that  that $q(k_5)=2$. Thus $h_2(k_5)=h_2(2p)$. Since $\KK/k_5$ is an unramified quadratic extension, then
		
		\begin{eqnarray}\label{class nbr of K+}
			h_2(\KK)=\frac{1}{2}\cdot h_2(k_5)=\frac{1}{2}\cdot h_2(2p) .
		\end{eqnarray}
		Assume by absurd that $\sqrt{\varepsilon_{q}}\sqrt{\varepsilon_{2q}}\sqrt{\varepsilon_{pq}}\sqrt{\varepsilon_{2pq}}$ is not a square in $\KK$. Then $q(\KK)=2^5$.	
		By Lemma  \ref{wada's f.}, we have:
		\begin{eqnarray}
			h_2(\KK)&=&\frac{1}{2^{9}}q(\KK)  h_2(2) h_2(p) h_2(q)h_2(2p) h_2(2q)h_2(pq)  h_2(2pq)  \\
			&=&\frac{1}{2^{9}}\cdot 2^5\cdot 1\cdot 1 \cdot 1  \cdot h_2(2p) \cdot 1 \cdot 2 \cdot 2=\frac{1}{4}\cdot h_2(2p).\nonumber
		\end{eqnarray}
		Which is a contradiction with	\eqref{class nbr of K+}. Therefore $c=1$ and $\sqrt{\varepsilon_{q}} \sqrt{\varepsilon_{2q}} \sqrt{\varepsilon_{pq}} \sqrt{\varepsilon_{2pq}}$\label{is a square} is a square in $\KK$. So the first item.

			
			\item    Assume that	$N(\varepsilon_{2p})=1$.  Then by Lemma \ref{units of k1},     $\{\varepsilon_{2}, \varepsilon_{p},	\sqrt{\varepsilon_{2p}}\}$  is a  fundamental system of units of $k_1$ and   from Lemmas  \ref{lm expressions of units under cond 2} and \ref{lm expressions of units q_2q} we deduce that $\{\varepsilon_{2}, \sqrt{\varepsilon_{q}}, \sqrt{\varepsilon_{2q}}\}$ and $\{ \varepsilon_{2}, 	\sqrt{\varepsilon_{pq}},\sqrt{\varepsilon_{2pq}}\}$ are respectively fundamental systems of units of $k_2$ and $k_3$.
			So we have:  	$$E_{k_1}E_{k_2}E_{k_3}=\langle-1,  \varepsilon_{2}, \varepsilon_{p},   \sqrt{\varepsilon_{q}}, \sqrt{\varepsilon_{2q}},  \sqrt{\varepsilon_{pq}} ,\sqrt{ \varepsilon_{2pq}}, \sqrt{\varepsilon_{2p}}\rangle.$$	
			Put
			$$\xi^2=\varepsilon_{2}^a\varepsilon_{p}^b \sqrt{\varepsilon_{q}}^c\sqrt{\varepsilon_{2q}}^d\sqrt{\varepsilon_{pq}}^e\sqrt{\varepsilon_{2pq}}^f
			\sqrt{\varepsilon_{2p}}^g,$$
			where $a, b, c, d, e, f$ and $g$ are in $\{0, 1\}$. We shall proceed as in the first item. Assume that $\xi\in \KK$.
			
			\noindent\ding{224}	Let us start	by applying   the norm map $N_{\KK/k_2}=1+\tau_2$. We have	
			\begin{eqnarray*}
				N_{\KK/k_2}(\xi^2)&=&
				\varepsilon_{2}^{2a}\cdot (-1)^b \cdot \varepsilon_{q}^c\cdot \varepsilon_{2q}^d\cdot1\cdot1 \cdot (-1)^{gu}\\\
				&=&	\varepsilon_{2}^{2a}  \varepsilon_{q}^c\varepsilon_{2q}^d\cdot(-1)^{b +gu} .
			\end{eqnarray*}
			Thus,  	  $	b +gu\equiv 0\pmod2$.    
			
			\noindent\ding{224}	Let   us apply    the norm map $N_{\KK/k_5}=1+\tau_1\tau_2$, with $k_5=\QQ(\sqrt{q}, \sqrt{2p})$.	We have	
			\begin{eqnarray*}
				N_{\KK/k_5}(\xi^2)&=&
				(-1)^a  \cdot (-1)^b \cdot (-1)^c  \cdot\varepsilon_{q}^c\cdot(-1)^d\cdot(-1)^e \cdot(-1)^f \cdot\varepsilon_{2pq}^f\cdot(-1)^g\cdot \varepsilon_{2p}^{g} \\\
				&=&\varepsilon_{2p}^{g} (-1)^{a+b+c+d+e+f+g } \cdot\varepsilon_{q}^c\varepsilon_{2pq}^f\varepsilon_{2p}^{g}.
			\end{eqnarray*}
			Thus, $a+b+c+d+e+f+g\equiv 0\pmod2$ and  $c+f+ g\equiv 0\pmod2$.	Therefore, $a+b +d+e  \equiv 0\pmod2$.
			
			\noindent\ding{224}	Let    us apply      the norm map $N_{\KK/k_6}=1+\tau_1\tau_3$, with $k_6=\QQ(\sqrt{p}, \sqrt{2q})$.	We have	
			\begin{eqnarray*}
				N_{\KK/k_6}(\xi^2)&=&
				(-1)^a  \cdot \varepsilon_{p}^{2b} \cdot (-1)^c \cdot(-1)^d \cdot\varepsilon_{2q}^d\cdot (-1)^e  \cdot(-1)^f \cdot\varepsilon_{2pq}^f\cdot (-1)^{gu+g}\\\
				&=&\varepsilon_{p}^{2b}\cdot (-1)^{a +c+ d+e+f+ug+g} \cdot\varepsilon_{2q}^d \varepsilon_{2pq}^f.
			\end{eqnarray*}
			Thus, $a +c+ d+e+f+ug+g\equiv 0\pmod2$ and $d=f$. Then,   $a +c +e +ug+g\equiv 0\pmod2$ and 
			$$\xi^2=\varepsilon_{2}^a\varepsilon_{p}^b \sqrt{\varepsilon_{q}}^c\sqrt{\varepsilon_{2q}}^d\sqrt{\varepsilon_{pq}}^e\sqrt{\varepsilon_{2pq}}^d
			\sqrt{\varepsilon_{2p}}^g,$$
			
			\noindent\ding{224} By applying      the norm map $N_{\KK/k_3}=1+\tau_2\tau_3$, with $k_3=\QQ(\sqrt{2}, \sqrt{pq})$, we deduce nothing new.		
			
			\noindent\ding{224} Let us apply the norm $N_{\KK/k_4}=1+\tau_1$, with $k_4=\QQ(\sqrt{p}, \sqrt{q})$. We have 
			\begin{eqnarray*}
				N_{\KK/k_5}(\xi^2)&=&(-1)^a\cdot \varepsilon_{p}^{2b} \cdot (-1)^c\cdot \varepsilon_{  q}^c\cdot (-1)^d\cdot (-1)^e\cdot \varepsilon_{   pq}^e\cdot(-1)^d\cdot  (-1)^{gu+g}\\
				&=&	\varepsilon_{p}^{2b} (-1)^{a+c+ e +gu+g} \varepsilon_{  q}^c\varepsilon_{   pq}^e.
			\end{eqnarray*}
			Thus, $a+c +e+ gu+g\equiv 0\pmod2$ and $c=e$. Then, $a   +gu+g\equiv 0\pmod2$ and
			$$\xi^2=\varepsilon_{2}^a\varepsilon_{p}^b \sqrt{\varepsilon_{q}}^e\sqrt{\varepsilon_{2q}}^d\sqrt{\varepsilon_{pq}}^e\sqrt{\varepsilon_{2pq}}^d
			\sqrt{\varepsilon_{2p}}^g,$$
			with
			\begin{eqnarray}
				b +gu\equiv 0\pmod2 \label{2eq1}\\
				e+d+ g\equiv 0\pmod2\label{2eqs}\\
				a+b +d+e  \equiv 0\pmod2 \label{2eq2}\\
				a   +ug+g\equiv 0\pmod2\label{2eq3}
			\end{eqnarray}
			On the other hand, as in the proof of the first item, we show that $h_2(\KK)=h_2(2p)$ and that $q(\KK)=2^7 $. So if $g=0$, then by equalities \eqref{2eq1} and \eqref{2eq3} $a=b=0$ and   by equality 
			\eqref{2eq2}, we get $d=e$. Thus,  $\xi^2=  \sqrt{\varepsilon_{q}}^e\sqrt{\varepsilon_{2q}}^e\sqrt{\varepsilon_{pq}}^e\sqrt{\varepsilon_{2pq}}^e$, with $e=0$ or $1$. In the two cases we have $q(\KK)\not=2^7 $. Therefore, $g=1$ and so by \eqref{2eqs} $e\not= d$. By  \eqref{2eq1}  and \eqref{2eq3}, $b=u\not=a$ and $a\equiv u+1\pmod 2$. Hence, necessarily the two  following equations have solution in $\KK$:
			$\xi^2=\varepsilon_{2}^a\varepsilon_{p}^u \sqrt{\varepsilon_{2q}} \sqrt{\varepsilon_{2pq}}\sqrt{\varepsilon_{2p}} $ and
			$\xi^2=\varepsilon_{2}^a\varepsilon_{p}^u \sqrt{\varepsilon_{q}} \sqrt{\varepsilon_{pq}}    \sqrt{\varepsilon_{2p}} $, where $a\equiv u+1\pmod 2$ and $u$ is defined in Page \pageref{the int u}. Since, $q(\KK)=2^7 $, these two equations are necessarily  solvable in $\KK$. Which completes the proof. 
		\end{enumerate}
		
	\end{proof}

	\begin{exam}Using  PARI/GP calculator version  2.15.5 (64bit), Feb 11 2024, we  get the following examples which illustrate the above theorem.
		\begin{enumerate}[\rm $1)$]
			 	\item  Consider the prime numbers $p=41$ and  $q=7$. In this case, the   conditions of the first item of the above theorem are verified and we have:
			$$E_{\KK}=\langle -1,  \varepsilon_{2}, \varepsilon_{p},   \sqrt{\varepsilon_{q}}, \sqrt{\varepsilon_{2q}},  \sqrt{\varepsilon_{pq}} , \sqrt{\varepsilon_{2}\varepsilon_{p}\varepsilon_{2p}}, 
			\sqrt{\sqrt{\varepsilon_{q}} \sqrt{\varepsilon_{2q}} \sqrt{\varepsilon_{pq}} \sqrt{\varepsilon_{2pq}}}     \rangle.$$
			Furthermore, we have $h_2(2p)=4$  and the $2$-class group of $\KK$ is isomorphic to $\ZZ/2\ZZ$.
			\item Consider the prime numbers $p=17$ and  $q=7$. In this case, the   conditions of the second item of the above theorem are verified and we have:
				$$E_{\KK}=\langle -1,     \varepsilon_{2}, \varepsilon_{p},   \sqrt{\varepsilon_{q}}, \sqrt{\varepsilon_{2q}},  \sqrt{\varepsilon_{pq}} , \sqrt{\varepsilon_{2}^a\varepsilon_{p}^a \sqrt{\varepsilon_{q}} \sqrt{\varepsilon_{pq}}
				\sqrt{\varepsilon_{2p}}},
			\sqrt{\varepsilon_{2}^a\varepsilon_{p}^a  \sqrt{\varepsilon_{2q}} \sqrt{\varepsilon_{2pq}}
				\sqrt{\varepsilon_{2p}}}   \rangle,$$
			for some 	$a\in\{0,1\}$ such that $a \equiv 1+u\pmod2$.	Furthermore,we have $h_2(2p)=2$  and the $2$-class group of $\KK$ is isomorphic to $\ZZ/2\ZZ$.
		\end{enumerate}
	\end{exam}

	
	\subsection{\bf The case: $p\equiv1\pmod 8$, $ q \equiv7\pmod8$  and 
		$\genfrac(){}{0}{p}{q} =1$.}\text{\;}\\

	\begin{lemma}\label{lm expressions of units under cond 3 case: 7=1 mod 8 and ()=1}
		Let        $p\equiv1\pmod 8$ and $ q \equiv7\pmod8$ be two primes such that
		$\genfrac(){}{0}{p}{q} =1$.
		\begin{enumerate}[\rm 1)]
			\item Let  $x$ and $y$   be two integers such that
			$ \varepsilon_{2pq}=x+y\sqrt{2pq}$. Then   
			\begin{enumerate}[\rm i.]
				\item $(x+1)$, $p(x+1)$ or $2p(x+1)$ is a square in $\NN$, 
				\item Furthermore, 
				\begin{enumerate}[\rm a)]
					\item If $(x+1)$ is a square in $\NN$, then $\sqrt{2\varepsilon_{2pq}}=y_1+y_2\sqrt{2pq}$ and 	$2=  y_1^2-2pqy_2^2$.
					\item If $p(x+1)$ is a square in $\NN$, then $\sqrt{2\varepsilon_{2pq}}=y_1\sqrt{p}+y_2\sqrt{2q}$ and $2= py_1^2-2qy_2^2$.
					\item If $2p(x+1)$ is a square in $\NN$, then $\sqrt{2\varepsilon_{2pq}}=y_1\sqrt{2p}+y_2\sqrt{q}$ and $2=  2py_1^2-qy_2^2$. 
				\end{enumerate}
				
			\end{enumerate}
			\indent Where $y_1$ and $y_2$ are two integers  such that $y=y_1y_2$.
			\item  Let    $v$ and $w$ be two integers such that
			$ \varepsilon_{pq}=v+w\sqrt{pq}$. Then  we have 
			\begin{enumerate}[\rm i.]
				\item $(v+1)$, $p(v+1)$ or $2p(v+1)$ is a square in $\NN$, 
				\item  Furthermore, 
				\begin{enumerate}[\rm a)]
					\item If $(v+1)$ is a square in $\NN$, then $\sqrt{2\varepsilon_{pq}}=w_1+w_2\sqrt{pq}$ and 	$2=  w_1^2-pqw_2^2$.
					\item If $p(v+1)$ is a square in $\NN$, then $\sqrt{2\varepsilon_{pq}}=w_1\sqrt{p}+w_2\sqrt{q}$ and $2= pw_1^2-qw_2^2$.
					\item If $2p(v+1)$ is a square in $\NN$, then $\sqrt{\varepsilon_{pq}}=w_1\sqrt{p}+w_2\sqrt{q}$ and $1= pw_1^2-qw_2^2$. 
				\end{enumerate}
				
			\end{enumerate}	
			Where $w_1$ and $w_2$ are two integers  such that $w=w_1w_2$ in $a)$ and $b)$, and  $w=2w_1w_2$ in $c)$.
		\end{enumerate}
	\end{lemma}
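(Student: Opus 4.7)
The plan is to mimic the proof of Lemma~\ref{lm expressions of units under cond 2}, but under the weaker hypothesis $\genfrac(){}{0}{p}{q}=1$ more decompositions will survive. Since $q\equiv 3\pmod 4$, both $N(\varepsilon_{2pq})$ and $N(\varepsilon_{pq})$ equal $1$, giving $(x+1)(x-1)=2pqy^2$ and $(v+1)(v-1)=pqw^2$. Each equation yields finitely many factorizations into coprime or $\gcd$-$2$ parts; the task is to enumerate these, discard those ruled out by Lemma~\ref{1:046}, and then eliminate sign choices via Legendre symbol arguments modulo $q$.

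For item~(1), after applying Lemma~\ref{1:046} to kill the possibilities $x\pm 1=2m^2$ and $x\pm 1=pqm^2$, the remaining decompositions are precisely
$$(1):\ x\pm 1=y_1^2,\ x\mp 1=2pqy_2^2;\quad (2):\ x\pm 1=py_1^2,\ x\mp 1=2qy_2^2;\quad (3):\ x\pm 1=2py_1^2,\ x\mp 1=qy_2^2.$$
I would then reduce modulo $q$ in each ``$-$'' branch: the condition $x\equiv -1\pmod q$ forces $x-1\equiv -2\pmod q$, producing relations of the form $y_1^2\equiv -2$, $py_1^2\equiv -2$, or $2py_1^2\equiv -2$ modulo $q$. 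Using $q\equiv 7\pmod 8$ (so $(-2/q)=-1$) combined with $\genfrac(){}{0}{p}{q}=1$, each yields a contradiction, so only the ``$+$'' branches survive; these are exactly the three cases in the statement, characterised by $(x+1)$, $p(x+1)$, or $2p(x+1)$ being a square. The explicit forms of $\sqrt{2\varepsilon_{2pq}}$ and the associated equations then follow by squaring the stated expressions and matching with $2\varepsilon_{2pq}=2x+2y\sqrt{2pq}$, using $y=y_1y_2$ obtained from comparing products.

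Item~(2) is parallel, but I would split on the parity of $v$: when $v$ is even, $v\pm 1$ are coprime odd integers with product $pqw^2$, yielding the decompositions underlying cases~(a) and~(b); when $v$ is odd, both $v\pm 1$ are even with $\gcd 2$, and Lemma~\ref{1:046} forces their square-free parts to be $\{2p,2q\}$, leading to case~(c). The same Legendre computation modulo $q$ then discards the wrong-sign variant in each case. The main obstacle is really only case~(c): because $v+1=2pw_1^2$ and $v-1=2qw_2^2$ \emph{both} carry a factor of $2$, their sum equals $2v$ rather than $4v$, so $pw_1^2+qw_2^2=v$ and consequently $(w_1\sqrt p+w_2\sqrt q)^2=\varepsilon_{pq}$ itself (not $2\varepsilon_{pq}$); subtracting gives $1=pw_1^2-qw_2^2$, and the product relation forces $w=2w_1w_2$. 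Keeping track of this factor-of-$2$ subtlety in case~(c), and the corresponding shift from $\sqrt{2\varepsilon_{pq}}$ to $\sqrt{\varepsilon_{pq}}$, is the only point at which one must diverge from the template of the earlier lemma.
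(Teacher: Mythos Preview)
Your proposal is correct and follows exactly the approach the paper intends: the paper's own proof reads in full ``We proceed as in the proof of Lemma~\ref{lm expressions of units under cond 2},'' and your argument is precisely that template, with the Legendre-symbol eliminations modulo $q$ adapted to the hypothesis $\genfrac(){}{0}{p}{q}=1$ so that three decompositions survive instead of one. Your explicit treatment of case~2(c)---tracking the extra factor of $2$ that turns $\sqrt{2\varepsilon_{pq}}$ into $\sqrt{\varepsilon_{pq}}$ and forces $w=2w_1w_2$---is in fact more detailed than what the paper supplies.
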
	
	\begin{proof}
		We proceed as in the proof of Lemma	\ref{lm expressions of units under cond 2}.
	\end{proof}

	{\begin{table}[H]
			\renewcommand{\arraystretch}{2.5}
			\begin{tabular}{|c|c|c|c|c|c|c|c|c|c}
				\hline
				$\varepsilon$  & Conditions  & $\varepsilon^{1+\tau_2}$   & $\varepsilon^{1+\tau_1\tau_2}$& $\varepsilon^{1+\tau_1\tau_3}$& $\varepsilon^{1+\tau_2\tau_3}$& $\varepsilon^{1+\tau_1}$  \\ \hline

				\multirow{3}{*}{ $\sqrt{\varepsilon_{2pq}} $}	  &$(x+1)$ is a square in $\mathbb{N}$  & $1$ & $-\varepsilon_{2pq}$& $-\varepsilon_{2pq}$ & $ \varepsilon_{2pq}$& $-1$ \\ \cline{2-7}
				
				&$p(x+1)$ is a square in $\mathbb{N}$   & $-1$ & $\varepsilon_{2pq}$&$-\varepsilon_{2pq}$ & $-\varepsilon_{2pq}$& $-1$ \\ \cline{2-7}
				
				&$2p(x+1)$ is a square  in $\mathbb{N}$   & $-1$ & $-\varepsilon_{2pq}$& $\varepsilon_{2pq}$ & $-\varepsilon_{2pq}$& $1$ \\ \hline

				\multirow{3}{*}{  	$\sqrt{\varepsilon_{pq}}$} & $(v+1)$ is a square  in $\mathbb{N}$  & $ 1$ & $-1$& $-1$ & $ \varepsilon_{pq}$&$ -\varepsilon_{pq}$ \\ \cline{2-7}
				
				& $p(v+1)$ is a square  in $\mathbb{N}$  & $-1$ & $1$& $-1$ & $-\varepsilon_{pq}$& $-\varepsilon_{pq}$ \\ \cline{2-7}
				
				& $2p(v+1)$ is a square in $\mathbb{N}$ & $-1$ & $-1$& $1$ & $-\varepsilon_{pq}$&$\varepsilon_{pq}$ \\ \hline

			\end{tabular}
			\vspace*{0.2cm}
			\caption{Norms maps of units } \label{tab3 case: 7=1 mod 8 and ()=1} 
	\end{table} }

		Let $p\equiv 1\pmod{8}$ and $q\equiv7\pmod 8$ be two primes such that     $\genfrac(){}{0}{p}{q} =1$. Then, by Lemmas \ref{wada's f.} and  \ref{class numbers of quadratic field},  we have:
	\begin{eqnarray}\label{classnumberofKK case: 7=1 mod 8 and ()=1}
		  h_2(\KK)=\frac{1}{2^{9}}q(\KK)\cdot h_2(2p)\cdot h_2(pq)\cdot h_2(2pq).
	\end{eqnarray}
 	The above lemma shows that we have nine cases to distinguish. The following theorems treat all these cases.

	\begin{theorem}\label{T_3_1_C1}  Let $p\equiv 1\pmod{8}$ and $q\equiv7\pmod 8$ be two primes such that     $\genfrac(){}{0}{p}{q} =1$. 
		Put     $\KK=\QQ(\sqrt 2, \sqrt{p}, \sqrt{q} )$.  Assume furthermore that $x+1$ and $v+1$ are squares in $\mathbb{N}$, where $x$ and $v$ are defined in Lemma \ref{lm expressions of units under cond 3 case: 7=1 mod 8 and ()=1}.
		\begin{enumerate}[\rm 1)]
			\item If $N(\varepsilon_{2p})=-1$,  then
			\begin{enumerate}[\rm $\bullet$]	
				\item The unit group of $\KK$ is :
				$$E_{\KK}=\langle -1,  \varepsilon_{2}, \varepsilon_{p},   \sqrt{\varepsilon_{q}}, \sqrt{\varepsilon_{2q}},  \sqrt{\varepsilon_{pq}} , \sqrt{\varepsilon_{2}\varepsilon_{p}\varepsilon_{2p}}, 
				\sqrt{\sqrt{\varepsilon_{q}}^a \sqrt{\varepsilon_{2q}}^a \sqrt{\varepsilon_{pq}}^a \sqrt{\varepsilon_{2pq}}^{1+b}}\rangle$$
				where $ a$, $b\in \{0,1\}$ such that $a\not=b$ and $a =1$ if and only if $\sqrt{\varepsilon_{q}}\sqrt{\varepsilon_{2q}}\sqrt{\varepsilon_{pq}}\sqrt{\varepsilon_{2pq}}$ is a square in $\KK$.
				\item The $2$-class number  of  $\KK$ equals $ \frac{1}{2^{4-a}} h_2(2p)h_2(pq)h_2(2pq)$.
			\end{enumerate}
			\item If $N(\varepsilon_{2p})=1$  and   $a\in\{0,1\}$ such that $a \equiv 1+u\pmod2$, then
			
			\begin{enumerate}[\rm $\bullet$]
				\item  The unit group of $\KK$ is :
				$$E_{\KK}=\langle -1,  \varepsilon_{2}, \varepsilon_{p},   \sqrt{\varepsilon_{q}}, \sqrt{\varepsilon_{2q}},  \sqrt{\varepsilon_{pq}} , \sqrt{\varepsilon_{2}^{ar'}\varepsilon_{p}^{ar'} \sqrt{\varepsilon_{q}}^{r'} \sqrt{\varepsilon_{pq}}^{r'}
					\sqrt{\varepsilon_{2p}}^{1+s'}},
				\sqrt{\varepsilon_{2}^{ar}\varepsilon_{p}^{ar}  \sqrt{\varepsilon_{2q}}^{r} \sqrt{\varepsilon_{2pq}}^{1+s}
					\sqrt{\varepsilon_{2p}}^r}      \rangle$$
				where $ r, r', s$, $s'\in \{0,1\}$ such that $r\not=s$ $($resp. $r'\not=s'$$)$ and $r =1$ $($resp.  $r' =1$$)$ if and only if  $ \varepsilon_{2}^{a}\varepsilon_{p}^{a}  \sqrt{\varepsilon_{2q}} \sqrt{\varepsilon_{2pq}}
				\sqrt{\varepsilon_{2p}}$ (resp. $\varepsilon_{2}^{a}\varepsilon_{p}^{a}  \sqrt{\varepsilon_{ q}} \sqrt{\varepsilon_{pq}}
				\sqrt{\varepsilon_{2p}}$) is a square in $\KK$.
				\item  The $2$-class number  of  $\KK$ equals $ \frac{1}{2^{4-r-r'}} h_2(2p)h_2(pq)h_2(2pq)$.
			\end{enumerate}
		\end{enumerate}
	\end{theorem}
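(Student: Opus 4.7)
The proof mimics the strategy of Theorem \ref{T_7_-1}, applying Wada's algorithm from page \pageref{algo wada} to the tower $\KK/k_i$. The plan is as follows. First, using Lemmas \ref{units of k1}, \ref{lm expressions of units q_2q} and \ref{lm expressions of units under cond 3 case: 7=1 mod 8 and ()=1} (specifically case i.a) of the latter under the hypothesis that $x+1$ and $v+1$ are squares), I would exhibit fundamental systems of units for $k_1, k_2, k_3$:
\[
\{\varepsilon_2,\varepsilon_p,\sqrt{\varepsilon_2\varepsilon_p\varepsilon_{2p}}\}\text{ or }\{\varepsilon_2,\varepsilon_p,\sqrt{\varepsilon_{2p}}\},\quad
\{\varepsilon_2,\sqrt{\varepsilon_q},\sqrt{\varepsilon_{2q}}\},\quad
\{\varepsilon_2,\sqrt{\varepsilon_{pq}},\sqrt{\varepsilon_{2pq}}\},
\]
depending on the sign of $N(\varepsilon_{2p})$. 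Writing a generic square-root element $\xi\in\KK$ of an element of $E_{k_1}E_{k_2}E_{k_3}$ as
\[
\xi^2=\varepsilon_{2}^a\varepsilon_{p}^b\sqrt{\varepsilon_{q}}^c\sqrt{\varepsilon_{2q}}^d\sqrt{\varepsilon_{pq}}^e\sqrt{\varepsilon_{2pq}}^f\cdot(\text{$k_1$-factor})^g,
\]
with exponents in $\{0,1\}$, reduces the problem to a linear system modulo $2$.

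Next, I would apply the successive norm maps $N_{\KK/k_i}=1+\tau$ for the five intermediate biquadratic fields $k_2,k_3,k_4,k_5,k_6$, reading off the action on each generator from Lemma~\ref{lm expressions of units q_2q} and Table~\ref{tab3 case: 7=1 mod 8 and ()=1} (first row for $\sqrt{\varepsilon_{2pq}}$ and first row for $\sqrt{\varepsilon_{pq}}$, corresponding to the case $x+1,v+1$ squares). Comparing each $N_{\KK/k_i}(\xi^2)$ against the known fundamental units of $k_i$ yields congruences on $a,b,c,d,e,f,g$ modulo $2$. In the case $N(\varepsilon_{2p})=-1$, I expect these constraints to collapse to $\xi^2=\sqrt{\varepsilon_q}^c\sqrt{\varepsilon_{2q}}^c\sqrt{\varepsilon_{pq}}^c\sqrt{\varepsilon_{2pq}}^{c+b}$ with $c,b\in\{0,1\}$ and $c\neq b$, giving the generator advertised in the theorem. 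In the case $N(\varepsilon_{2p})=1$ the appearance of $u$ via $\sqrt{\varepsilon_{2p}}^{1+\tau}$ forces $b\equiv a u\pmod 2$ and splits the candidate square roots into the two advertised families, parametrised by $(r,s)$ and $(r',s')$.

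The class number part is then handled by the mass formula Lemma~\ref{wada's f.}, which here gives
\[
h_2(\KK)=\frac{1}{2^{9}}\,q(\KK)\,h_2(2p)\,h_2(pq)\,h_2(2pq),
\]
since all other quadratic factors $h_2(2),h_2(p),h_2(q),h_2(2q)$ equal $1$ by Lemma~\ref{class numbers of quadratic field}. The unit index $q(\KK)=[E_\KK:\prod E_{k_i}]$ is read directly off the enlarged generating set: each independent ``extra'' square root doubles $q(\KK)$. So in case (1) we get $q(\KK)=2^{5+a}$ and in case (2) $q(\KK)=2^{6+r+r'}$, yielding the stated formulas $h_2(\KK)=\frac{1}{2^{4-a}}h_2(2p)h_2(pq)h_2(2pq)$ and $\frac{1}{2^{4-r-r'}}h_2(2p)h_2(pq)h_2(2pq)$.

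The main obstacle, as usual in these triquadratic computations, is the careful bookkeeping through five successive norm maps when the two ambiguity parameters $u$ (from Lemma~\ref{lm noms esp_2p}) and the indicator ``which of the three subcases of Lemma~\ref{lm expressions of units under cond 3 case: 7=1 mod 8 and ()=1} occurs'' are simultaneously present. Unlike in Theorem~\ref{T_7_-1}, where the cyclicity of $\mathrm{Cl}_2(k_5)$ and the unramified extension $\KK/k_5$ pinned down $h_2(\KK)$ independently and forced a specific combination to be a square, here $h_2(pq)$ and $h_2(2pq)$ are both divisible by $4$, so one cannot in general decide \emph{which} combinations among $\sqrt{\varepsilon_q}\sqrt{\varepsilon_{2q}}\sqrt{\varepsilon_{pq}}\sqrt{\varepsilon_{2pq}}$, etc., are squares in $\KK$ without introducing the indicator parameters $a,b$ (resp.\ $r,r',s,s'$) as part of the statement. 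Showing that \emph{exactly one} element of each relevant pair is a square (so that $r\neq s$ and $r'\neq s'$) is thus the delicate point, and I would establish it by combining the constraints from the norm maps with the bound on $q(\KK)$ coming from the class number formula and the known cyclicity of the $2$-class groups of the intermediate biquadratic subfields.
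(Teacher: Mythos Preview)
Your approach is essentially the same as the paper's: the paper simply notes that the norm-map computations are identical to those of Theorem~\ref{T_7_-1}, because under the hypothesis ``$x+1$ and $v+1$ are squares'' the relevant rows of Table~\ref{tab3 case: 7=1 mod 8 and ()=1} coincide exactly with the norm values used in Lemma~\ref{lm expressions of units under cond 2}. So carrying over those computations verbatim reduces $\xi^2$ to $\sqrt{\varepsilon_q}^c\sqrt{\varepsilon_{2q}}^c\sqrt{\varepsilon_{pq}}^c\sqrt{\varepsilon_{2pq}}^c$ in case~1), and to the system \eqref{2eq1}--\eqref{2eq3} in case~2), just as you outline.

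There is one point of confusion in your final paragraph. The conditions $r\neq s$ and $r'\neq s'$ are \emph{definitional}, not something to be proved: by the theorem's own wording, $r$ is the indicator of whether $\varepsilon_2^a\varepsilon_p^a\sqrt{\varepsilon_{2q}}\sqrt{\varepsilon_{2pq}}\sqrt{\varepsilon_{2p}}$ is a square in $\KK$, and $s$ is then simply $1-r$ (so that the displayed generator degenerates to $\sqrt{\varepsilon_{2pq}}$ when $r=0$). There is no ``delicate point'' and no need to invoke cyclicity of intermediate $2$-class groups here; unlike Theorem~\ref{T_7_-1}, the present statement deliberately leaves the ambiguity unresolved and packages it into the parameters. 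Also, your unit-index count in case~2) should be $q(\KK)=2^{5+r+r'}$ rather than $2^{6+r+r'}$: the product $E_{k_1}E_{k_2}E_{k_3}$ already sits at index $2^5$ over $\prod_i E_{\QQ(\sqrt{d_i})}$ (five square roots $\sqrt{\varepsilon_q},\sqrt{\varepsilon_{2q}},\sqrt{\varepsilon_{pq}},\sqrt{\varepsilon_{2pq}},\sqrt{\varepsilon_{2p}}$), and each of the two new generators contributes one further factor of $2$ exactly when its indicator equals $1$.
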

	 \begin{proof}
	 	The same computations as in the proof of Theorem \ref{T_7_-1} give the result.
	 \end{proof}
	

	\begin{exam}Using  PARI/GP calculator version  2.15.5 (64bit), Feb 11 2024, we  get the following examples which illustrate the above theorem.
			\begin{enumerate}[\rm $1)$]
	 	\item Consider the prime numbers $p=113$ and  $q=463$. In this case, the   conditions of the first item of the above theorem are verified and we have:
		$$E_{\KK}=\langle -1,  \varepsilon_{2}, \varepsilon_{p},   \sqrt{\varepsilon_{q}}, \sqrt{\varepsilon_{2q}},  \sqrt{\varepsilon_{pq}} , \sqrt{\varepsilon_{2}\varepsilon_{p}\varepsilon_{2p}}, 
		\sqrt{\sqrt{\varepsilon_{q}}  \sqrt{\varepsilon_{2q}}  \sqrt{\varepsilon_{pq}}  \sqrt{\varepsilon_{2pq}}}\rangle.$$
		
		Furthermore, we have $h_2(2p)=8$, $h_2(pq)=4$,  $h_2(2pq)=4$ and  $h_2(\KK)=16$.
		\item Consider the prime numbers $p=17$ and  $q=191$. In this case, the   conditions of the second item of the above theorem are verified and we have:
			$$E_{\KK}=\langle -1,  \varepsilon_{2}, \varepsilon_{p},   \sqrt{\varepsilon_{q}}, \sqrt{\varepsilon_{2q}},  \sqrt{\varepsilon_{pq}} , \sqrt{\varepsilon_{2}^a \varepsilon_{p}^a  \sqrt{\varepsilon_{q}}  \sqrt{\varepsilon_{pq}} 
			\sqrt{\varepsilon_{2p}} },
		\sqrt{\varepsilon_{2}^a  \varepsilon_{p}^a   \sqrt{\varepsilon_{2q}}  \sqrt{\varepsilon_{2pq}} 
			\sqrt{\varepsilon_{2p}} }   \rangle,$$
	  for some 	$a\in\{0,1\}$ such that $a \equiv 1+u\pmod2$.	Furthermore, we have $h_2(2p)=2$, $h_2(pq)=8$,  $h_2(2pq)=4$ and  $h_2(\KK)=16$.
	\end{enumerate}
	\end{exam}

\begin{theorem}\label{T_7_1_C2} Let $p\equiv 1\pmod{8}$ and $q\equiv7\pmod 8$ be two primes such that     $\genfrac(){}{0}{p}{q} =1$. 
	Put     $\KK=\QQ(\sqrt 2, \sqrt{p}, \sqrt{q} )$.  Assume furthermore that $x+1$ and $p(v+1)$ are squares in $\mathbb{N}$, where $x$ and $v$ are defined in Lemma \ref{lm expressions of units under cond 3 case: 7=1 mod 8 and ()=1}.
	We have
	\begin{enumerate}[\rm 1)]
		\item If $N(\varepsilon_{2p})=-1$, then
		\begin{enumerate}[\rm $\bullet$]
			
			\item The unit group of $\KK$ is :
			$$E_{\KK}=\langle -1,   \varepsilon_{2}, \varepsilon_{p} ,    \sqrt{\varepsilon_{q}}, \sqrt{\varepsilon_{2q}},\sqrt{ \varepsilon_{pq}} ,\sqrt{ \varepsilon_{2pq}}, \sqrt{\varepsilon_{2}\varepsilon_{p}\varepsilon_{2p}} \rangle.$$
			\item  The $2$-class number  of  $\KK$ equals $ \frac{1}{2^{4}} h_2(2p)h_2(pq)h_2(2pq)$.  
		\end{enumerate}
		\item If $N(\varepsilon_{2p})=1$ and   $a\in\{0,1\}$ such that $a \equiv 1+u\pmod2$, then
		
		\begin{enumerate}[\rm $\bullet$]
			\item  The unit group of $\KK$ is :
			$$E_{\KK}=\langle -1,   \varepsilon_{2}, \varepsilon_{p},   \sqrt{\varepsilon_{q}}, \sqrt{\varepsilon_{2q}},  \sqrt{\varepsilon_{pq}} ,\sqrt{ \varepsilon_{2pq}},    \sqrt{\varepsilon_{2}^{a\alpha}\varepsilon_{p}^{u\alpha}     \sqrt{\varepsilon_{2q}}^\alpha\sqrt{\varepsilon_{2pq}}^\alpha
				\sqrt{\varepsilon_{2p}}^{1+\gamma} }  \rangle$$
			where $\alpha$, $\gamma\in \{0,1\}$ such that $\alpha\not=\gamma$ and $\alpha =1$ if and only if  $\varepsilon_{2}^a \varepsilon_{p}^u      \sqrt{\varepsilon_{2q}} \sqrt{\varepsilon_{2pq}}\sqrt{\varepsilon_{2p}}$ is a square in $\KK$.
			\item  The $2$-class number  of  $\KK$ equals $ \frac{1}{2^{4-\alpha}} h_2(2p)h_2(pq)h_2(2pq)$.
		\end{enumerate}
	\end{enumerate}
\end{theorem}
\begin{proof} 
		 	\begin{enumerate}[\rm 1)]
		\item  
		Assume that   $N(\varepsilon_{2p})=-1$.  By Lemma \ref{units of k1},     $\{\varepsilon_{2}, \varepsilon_{p},	\sqrt{\varepsilon_{2}\varepsilon_{p}\varepsilon_{2p}}\}$  is a  fundamental system of units of $k_1$. Using Lemmas \ref{lm expressions of units q_2q} and \ref{lm expressions of units under cond 3 case: 7=1 mod 8 and ()=1}, we check   that $\{\varepsilon_{2}, \sqrt{\varepsilon_{q}}, \sqrt{\varepsilon_{2q}}\}$ and $\{ \varepsilon_{2}, 	{\varepsilon_{pq}},\sqrt{\varepsilon_{2pq}}\}$ are respectively fundamental systems of units of $k_2$ and $k_3$.
		It follows that,  	$$E_{k_1}E_{k_2}E_{k_3}=\langle-1,  \varepsilon_{2}, \varepsilon_{p},   \sqrt{\varepsilon_{q}}, \sqrt{\varepsilon_{2q}},   {\varepsilon_{pq}} ,\sqrt{ \varepsilon_{2pq}}, \sqrt{\varepsilon_{2}\varepsilon_{p}\varepsilon_{2p}}\rangle.$$	
		Thus we shall determine elements of $E_{k_1}E_{k_2}E_{k_3}$ which are squares in $\KK$. 
		Notice that by Lemma \ref{lm expressions of units under cond 3 case: 7=1 mod 8 and ()=1},
		$\varepsilon_{pq}$ is a square in  $\KK$.
	Let  $\xi$ is an element of $\KK$ which is the  square root of an element of $E_{k_1}E_{k_2}E_{k_3}$. We can assume that
		$$\xi^2=\varepsilon_{2}^a\varepsilon_{p}^b \sqrt{\varepsilon_{q}}^c\sqrt{\varepsilon_{2q}}^d\sqrt{\varepsilon_{2pq}}^f
		\sqrt{\varepsilon_{2}\varepsilon_{p}\varepsilon_{2p}}^g,$$
		where $a, b, c, d,   f$ and $g$ are in $\{0, 1\}$.

		\noindent\ding{224}  Let us start	by applying   the norm map $N_{\KK/k_2}=1+\tau_2$. We have 
		$\sqrt{\varepsilon_{2}\varepsilon_{p}\varepsilon_{2p}}^{1+ \tau_2}=(-1)^v\varepsilon_{2}$, for some $v\in\{0,1\}$. By means of the table
		  \eqref{norm q=7 p=1mod4q} and Table \ref{tab3 case: 7=1 mod 8 and ()=1},   we get:
		\begin{eqnarray*}
			N_{\KK/k_2}(\xi^2)&=&
			\varepsilon_{2}^{2a}\cdot(-1)^b \cdot \varepsilon_{q}^c\cdot\varepsilon_{2q}^d\cdot1 \cdot (-1)^{gv}\varepsilon_{2}^{g}\\
			&=&	\varepsilon_{2}^{2a}  \varepsilon_{q}^c\varepsilon_{2q}^d\cdot(-1)^{b+gv} \varepsilon_{2}^g.
		\end{eqnarray*}
		
		Thus    $b+gv\equiv 0\pmod2$ and $g=0$. Therefore,  $b=0$ and
		$$\xi^2=\varepsilon_{2}^a  \sqrt{\varepsilon_{q}}^c\sqrt{\varepsilon_{2q}}^d \sqrt{\varepsilon_{2pq}}^f.$$

		\noindent\ding{224} Let us apply the norm $N_{\KK/k_5}=1+\tau_1\tau_2$, with $k_5=\QQ(\sqrt{q}, \sqrt{2p})$. By the table  \eqref{norm q=7 p=1mod4q} and Table \ref{tab3 case: 7=1 mod 8 and ()=1}, we  have:
		\begin{eqnarray*}
			N_{\KK/k_5}(\xi^2)&=&(-1)^a\cdot (-1)^c\cdot \varepsilon_{  q}^c\cdot(-1)^d \cdot (-1)^f\cdot \varepsilon_{  2pq}^f\\
			&=&	 (-1)^{a+c+d +f}\varepsilon_{  q}^c\cdot \varepsilon_{  2pq}^f.
		\end{eqnarray*}
		Thus $a+c+d +f=0\pmod 2$ and $f=c$. Thus, $a=d $. Therefore, 
		$$\xi^2=\varepsilon_{2}^a  \sqrt{\varepsilon_{q}}^c\sqrt{\varepsilon_{2q}}^a \sqrt{\varepsilon_{2pq}}^c.$$

		\noindent\ding{224} Let us apply the norm $N_{\KK/k_6}=1+\tau_1\tau_3$, with $k_6=\QQ(\sqrt{p}, \sqrt{2q})$. By the table \eqref{norm q=7 p=1mod4q} and Table \ref{tab3 case: 7=1 mod 8 and ()=1}, we  have:
		\begin{eqnarray*}
			N_{\KK/k_6}(\xi^2)&=&(-1)^a\cdot (-1)^c\cdot (-1)^a\cdot \varepsilon_{2  q}^a\cdot  (-1)^c\cdot \varepsilon_{  2pq}^c\\
			&=&	  \varepsilon_{2  q}^a\varepsilon_{  2pq}^c.
		\end{eqnarray*}
		Thus 
		$a=c$. Therefore   
		$$\xi^2=\varepsilon_{2}^a  \sqrt{\varepsilon_{q}}^a\sqrt{\varepsilon_{2q}}^a \sqrt{\varepsilon_{2pq}}^a.$$

		\noindent\ding{224} Let us apply the norm $N_{\KK/k_4}=1+\tau_1$, with $k_4=\QQ(\sqrt{p}, \sqrt{q})$.  By the table \eqref{norm q=7 p=1mod4q} and Table \ref{tab3 case: 7=1 mod 8 and ()=1}, we  have:		
		\begin{eqnarray*}
			N_{\KK/k_4}(\xi^2)&=&(-1)^a\cdot (-1)^a\cdot \varepsilon_{  q}^a\cdot (-1)^a \cdot(-1)^a  \\
			&=&	 \varepsilon_{  q}^a.
		\end{eqnarray*}
		Thus   $a=0$.
		  It follows that the only element of $E_{k_1}E_{k_2}E_{k_3}$ that is a square in $\KK$ is $\varepsilon_{pq}$. So the first item.

	\item Assume that   $N(\varepsilon_{2p})=1$.  By Lemma \ref{units of k1},     $\{\varepsilon_{2}, \varepsilon_{p},	\sqrt{\varepsilon_{2p}}\}$  is a  fundamental system of units of $k_1$. Using Lemmas \ref{lm expressions of units q_2q} and \ref{lm expressions of units under cond 3 case: 7=1 mod 8 and ()=1}, we check   that $\{\varepsilon_{2}, \sqrt{\varepsilon_{q}}, \sqrt{\varepsilon_{2q}}\}$ and $\{ \varepsilon_{2}, 	{\varepsilon_{pq}},\sqrt{\varepsilon_{2pq}}\}$ are respectively fundamental systems of units of $k_2$ and $k_3$.
	It follows that,  	$$E_{k_1}E_{k_2}E_{k_3}=\langle-1,  \varepsilon_{2}, \varepsilon_{p},   \sqrt{\varepsilon_{q}}, \sqrt{\varepsilon_{2q}},   {\varepsilon_{pq}} ,\sqrt{ \varepsilon_{2pq}}, \sqrt{\varepsilon_{2p}}\rangle.$$	
	Thus we shall determine elements of $E_{k_1}E_{k_2}E_{k_3}$ which are squares in $\KK$. 
	Note that  
	$\varepsilon_{pq}$ is a square in  $\KK$. 	Let  $\xi$ is an element of $\KK$ which is the  square root of an element of $E_{k_1}E_{k_2}E_{k_3}$. Assume that
	$$\xi^2=\varepsilon_{2}^a\varepsilon_{p}^b \sqrt{\varepsilon_{q}}^c\sqrt{\varepsilon_{2q}}^d\sqrt{\varepsilon_{2pq}}^f
	\sqrt{\varepsilon_{2p}}^g,$$
	where $a, b, c, d,   f$ and $g$ are in $\{0, 1\}$.

\noindent\ding{224}  Let us start	by applying   the norm map $N_{\KK/k_2}=1+\tau_2$.   By means of 
\eqref{T_3_-1_eqi2p_N=1}, \eqref{norm q=7 p=1mod4q} and Table \ref{tab3 case: 7=1 mod 8 and ()=1},   we get:
\begin{eqnarray*}
	N_{\KK/k_2}(\xi^2)&=&
	\varepsilon_{2}^{2a}\cdot(-1)^b \cdot \varepsilon_{q}^c\cdot\varepsilon_{2q}^d\cdot1 \cdot (-1)^{gu} \\
	&=&	\varepsilon_{2}^{2a}  \varepsilon_{q}^c\varepsilon_{2q}^d\cdot(-1)^{b+gu}  .
\end{eqnarray*}

Thus    $b+gu\equiv 0\pmod2$.

\noindent\ding{224} Let us apply the norm $N_{\KK/k_5}=1+\tau_1\tau_2$, with $k_5=\QQ(\sqrt{q}, \sqrt{2p})$. By \eqref{T_3_-1_eqi2p_N=1}, \eqref{norm q=7 p=1mod4q} and Table \ref{tab3 case: 7=1 mod 8 and ()=1}, we  have:
\begin{eqnarray*}
	N_{\KK/k_5}(\xi^2)&=&(-1)^a\cdot(-1)^b\cdot (-1)^c\cdot \varepsilon_{  q}^c\cdot(-1)^d \cdot (-1)^f\cdot \varepsilon_{  2pq}^f\cdot (-1)^g\cdot \varepsilon_{  2p}^g\\
	&=&	 (-1)^{a+b+c+d +f+g}\varepsilon_{  q}^c\cdot \varepsilon_{  2pq}^f\cdot \varepsilon_{  2p}^g.
\end{eqnarray*}
Thus $a+b+c+d +f+g=0\pmod 2$ and $ c  +f+g=0\pmod 2$. So $a+b +d =0\pmod 2$.

\noindent\ding{224} Let us apply the norm $N_{\KK/k_6}=1+\tau_1\tau_3$, with $k_6=\QQ(\sqrt{p}, \sqrt{2q})$. By \eqref{T_3_-1_eqi2p_N=1}, \eqref{norm q=7 p=1mod4q} and Table \ref{tab3 case: 7=1 mod 8 and ()=1}, we  have:
\begin{eqnarray*}
	N_{\KK/k_6}(\xi^2)&=&(-1)^a\cdot \varepsilon_{p}^{2b}\cdot (-1)^c \cdot (-1)^d\cdot \varepsilon_{2q}^{d} \cdot (-1)^f\cdot \varepsilon_{  2pq}^f\cdot (-1)^{gu+g}  \\
	&=&	 \varepsilon_{p}^{2b} (-1)^{a+c+d+f+gu+g} \varepsilon_{2q}^{d}\cdot\varepsilon_{  2pq}^f
\end{eqnarray*}
Thus 
  $a+c+d+f+gu+g=0\pmod 2$ and   $d=f$. So  $a+c+gu+g=0\pmod 2$. Therefore,
$$\xi^2=\varepsilon_{2}^a\varepsilon_{p}^b \sqrt{\varepsilon_{q}}^c\sqrt{\varepsilon_{2q}}^d\sqrt{\varepsilon_{2pq}}^d
\sqrt{\varepsilon_{2p}}^g,$$

\noindent\ding{224} Let us apply the norm $N_{\KK/k_4}=1+\tau_1$, with $k_4=\QQ(\sqrt{p}, \sqrt{q})$.  By \eqref{T_3_-1_eqi2p_N=1}, \eqref{norm q=7 p=1mod4q} and Table \ref{tab3 case: 7=1 mod 8 and ()=1}, we  have:
\begin{eqnarray*}
	N_{\KK/k_4}(\xi^2)&=&(-1)^a\cdot \varepsilon_{p}^{2b}\cdot (-1)^c\cdot \varepsilon_{ q}^{c}\cdot (-1)^d \cdot (-1)^d\cdot  (-1)^{gu+g}  \\
	&=&	 \varepsilon_{p}^{2b} (-1)^{a+c+gu+g} \varepsilon_{ q}^{c} 
\end{eqnarray*}
Thus 
$c=0$ and so $a+gu+g=0\pmod 2$. Since $c+f+g=0\pmod 2$, this implies that $g=f=d$. As $b+gu=0=b+du\pmod 2$, then we have:
$$\xi^2=\varepsilon_{2}^a\varepsilon_{p}^{du}  \sqrt{\varepsilon_{2q}}^d\sqrt{\varepsilon_{2pq}}^d
\sqrt{\varepsilon_{2p}}^d,$$
where $a+du+d=0\pmod 2$. So we have the second item.
\end{enumerate}

\end{proof}

 \begin{exam}Using  PARI/GP calculator version  2.15.5 (64bit), Feb 11 2024, we  get the following examples which illustrate the above theorem.
 	\begin{enumerate}[\rm $1)$]
 		\item Consider the prime numbers $p=41$ and  $q=431$. In this case, the   conditions of the first item of the above theorem are verified and we have:
 	$$E_{\KK}=\langle -1,   \varepsilon_{2}, \varepsilon_{p} ,    \sqrt{\varepsilon_{q}}, \sqrt{\varepsilon_{2q}},\sqrt{ \varepsilon_{pq}} ,\sqrt{ \varepsilon_{2pq}}, \sqrt{\varepsilon_{2}\varepsilon_{p}\varepsilon_{2p}} \rangle.$$
 		
 		Furthermore, we have $h_2(2p)=4$, $h_2(pq)=4$,  $h_2(2pq)=8$ and  $h_2(\KK)=8$.
 		\item  Consider the prime numbers $p=17$ and  $q=47$. In this case, the   conditions of the second item of the above theorem are verified and we have:
 		$$E_{\KK}=\langle -1,   \varepsilon_{2}, \varepsilon_{p},   \sqrt{\varepsilon_{q}}, \sqrt{\varepsilon_{2q}},  \sqrt{\varepsilon_{pq}} ,\sqrt{ \varepsilon_{2pq}},    \sqrt{\varepsilon_{2}^{a}\varepsilon_{p}^{u}     \sqrt{\varepsilon_{2q}}\sqrt{\varepsilon_{2pq}}
 			\sqrt{\varepsilon_{2p}} }  \rangle,$$
 for some 	$a\in\{0,1\}$ such that $a \equiv 1+u\pmod2$.	Furthermore, we have $h_2(2p)=2$, $h_2(pq)=8$,  $h_2(2pq)=4$ and  $h_2(\KK)=8$.
 	\end{enumerate}
 \end{exam}

	\begin{theorem}\label{T_7_1-C3} Let $p\equiv 1\pmod{8}$ and $q\equiv7\pmod 8$ be two primes such that     $\genfrac(){}{0}{p}{q} =1$. 
		Put     $\KK=\QQ(\sqrt 2, \sqrt{p}, \sqrt{q} )$.  Assume furthermore that $x+1$ and $2p(v+1)$ are squares in $\mathbb{N}$, where $x$ and $v$ are defined in Lemma \ref{lm expressions of units under cond 3 case: 7=1 mod 8 and ()=1}.
		\begin{enumerate}[\rm 1)]
			\item Assume that $N(\varepsilon_{2p})=-1$.  We have
			\begin{enumerate}[\rm $\bullet$]
				
				\item The unit group of $\KK$ is :
				$$E_{\KK}=\langle -1,   \varepsilon_{2}, \varepsilon_{p} ,    \sqrt{\varepsilon_{q}}, \sqrt{\varepsilon_{2q}},\sqrt{ \varepsilon_{pq}} ,\sqrt{ \varepsilon_{2pq}}, \sqrt{\varepsilon_{2}\varepsilon_{p}\varepsilon_{2p}} \rangle.$$
				\item  The $2$-class number  of  $\KK$ equals $ \frac{1}{2^{4}} h_2(2p)h_2(pq)h_2(2pq)$.  
			\end{enumerate}
			\item Assume that $N(\varepsilon_{2p})=1$ and let $a\in\{0,1\}$ such that $a \equiv 1+u\pmod2$.   We have
			
			\begin{enumerate}[\rm $\bullet$]
				\item  The unit group of $\KK$ is :
				$$E_{\KK}=\langle -1,   \varepsilon_{2}, \varepsilon_{p},   \sqrt{\varepsilon_{q}}, \sqrt{\varepsilon_{2q}},  \sqrt{\varepsilon_{pq}} ,\sqrt{ \varepsilon_{2pq}},    \sqrt{\varepsilon_{2}^{a\alpha}\varepsilon_{p}^{u\alpha}     \sqrt{\varepsilon_{2q}}^\alpha\sqrt{\varepsilon_{2pq}}^\alpha
					\sqrt{\varepsilon_{2p}}^{1+\gamma} }  \rangle$$
				where $\alpha$, $\gamma\in \{0,1\}$ such that $\alpha\not=\gamma$ and $\alpha =1$ if and only if $\varepsilon_{2}^a \varepsilon_{p}^a      \sqrt{\varepsilon_{2q}} \sqrt{\varepsilon_{2pq}}\sqrt{\varepsilon_{2p}}$ is a square in $\KK$.
				\item  The $2$-class number  of  $\KK$ equals $ \frac{1}{2^{4-\alpha}} h_2(2p)h_2(pq)h_2(2pq)$.
			\end{enumerate}
		\end{enumerate}
	\end{theorem}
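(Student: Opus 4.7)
The approach mirrors the proofs of Theorems \ref{T_7_-1} and \ref{T_7_1_C2}; only the signs coming from the $\sqrt{\varepsilon_{pq}}$-column change, and are now to be read from the $2p(v+1)$-row of Table \ref{tab3 case: 7=1 mod 8 and ()=1}. The key preliminary observation is that item 2(c) of Lemma \ref{lm expressions of units under cond 3 case: 7=1 mod 8 and ()=1} yields $\sqrt{\varepsilon_{pq}}=w_1\sqrt p+w_2\sqrt q\in\QQ(\sqrt p,\sqrt q)\subset\KK$, so $\varepsilon_{pq}$ is a square in $\KK$. Together with Lemma \ref{units of k1}, Lemma \ref{lm expressions of units q_2q} and item 1 of Lemma \ref{lm expressions of units under cond 3 case: 7=1 mod 8 and ()=1}, this gives fundamental systems of units of $k_1,k_2,k_3$ exactly as in the proof of Theorem \ref{T_7_1_C2}, whence
$$E_{k_1}E_{k_2}E_{k_3}=\langle-1,\varepsilon_2,\varepsilon_p,\sqrt{\varepsilon_q},\sqrt{\varepsilon_{2q}},\varepsilon_{pq},\sqrt{\varepsilon_{2pq}},\eta\rangle,$$
with $\eta=\sqrt{\varepsilon_2\varepsilon_p\varepsilon_{2p}}$ in case 1) and $\eta=\sqrt{\varepsilon_{2p}}$ in case 2).

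I would then write a generic candidate square as
$$\xi^2=\varepsilon_2^a\varepsilon_p^b\sqrt{\varepsilon_q}^c\sqrt{\varepsilon_{2q}}^d\sqrt{\varepsilon_{2pq}}^f\eta^g,\qquad a,b,c,d,f,g\in\{0,1\},$$
and apply successively the norm maps $N_{\KK/k_2}=1+\tau_2$, $N_{\KK/k_5}=1+\tau_1\tau_2$ with $k_5=\QQ(\sqrt q,\sqrt{2p})$, $N_{\KK/k_6}=1+\tau_1\tau_3$ with $k_6=\QQ(\sqrt p,\sqrt{2q})$, and $N_{\KK/k_4}=1+\tau_1$ with $k_4=\QQ(\sqrt p,\sqrt q)$. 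Each image must be a square in the target field; using Lemma \ref{1:046}, table \eqref{norm q=7 p=1mod4q}, and the $2p(v+1)$-row of Table \ref{tab3 case: 7=1 mod 8 and ()=1}, these requirements translate into a system of linear congruences modulo $2$ in $(a,b,c,d,f,g)$. Solving this system produces exactly the exponent patterns appearing in the generator claimed by the theorem.

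To separate genuine squares in $\KK$ from formal survivors of the norm test, I would pin down the unit index $q(\KK)$ exactly as in the proof of Theorem \ref{T_7_-1}: the extension $\KK/k_5$ is unramified quadratic, giving $h_2(\KK)=\tfrac12 h_2(k_5)$, and $h_2(k_5)$ is computed from Lemma \ref{wada's f.} applied to $k_5$ combined with Lemma \ref{class numbers of quadratic field}. Comparing this value with the formula \eqref{classnumberofKK case: 7=1 mod 8 and ()=1} for $h_2(\KK)$ forces $q(\KK)$, which in turn fixes the parameter $\alpha$ (and $\gamma=1-\alpha$) and yields the stated $2$-class number. The main obstacle is the bookkeeping: the $2p(v+1)$-row produces a sign pattern different from the $p(v+1)$-row used in Theorem \ref{T_7_1_C2}, so the four-norm linear system must be re-solved carefully, and one must re-verify that $\KK/k_5$ is indeed unramified and that the $2$-class group of $k_5$ remains cyclic in the present congruence regime, so that the identity $h_2(\KK)=\tfrac12 h_2(k_5)$ still applies. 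Once these checks are in place, both items follow by the same elimination argument used in Theorems \ref{T_7_-1} and \ref{T_7_1_C2}.
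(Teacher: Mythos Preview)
Your norm-map analysis is exactly the paper's approach and is correct: with $\varepsilon_{pq}$ already a square in $\KK$ (from item 2(c) of Lemma \ref{lm expressions of units under cond 3 case: 7=1 mod 8 and ()=1}), one writes $\xi^2=\varepsilon_2^a\varepsilon_p^b\sqrt{\varepsilon_q}^c\sqrt{\varepsilon_{2q}}^d\sqrt{\varepsilon_{2pq}}^f\eta^g$ and applies $1+\tau_2$, $1+\tau_1\tau_2$, $1+\tau_1\tau_3$, $1+\tau_1$. In case 1) these four norms force \emph{all} exponents to vanish (the last step, $N_{\KK/k_4}(\xi^2)=\varepsilon_q^a$, gives $a=0$), so $q(\KK)=2^5$ and the class-number formula \eqref{classnumberofKK case: 7=1 mod 8 and ()=1} gives the stated value directly. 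In case 2) the same norms reduce $\xi^2$ to the single candidate $\varepsilon_2^a\varepsilon_p^{du}\sqrt{\varepsilon_{2q}}^d\sqrt{\varepsilon_{2pq}}^d\sqrt{\varepsilon_{2p}}^d$ with $a+du+d\equiv 0\pmod 2$; the paper simply records this and sets $\alpha=d$.

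The gap in your proposal is the final paragraph. You plan to pin down $q(\KK)$ via $h_2(\KK)=\tfrac12 h_2(k_5)$ as in Theorem \ref{T_7_-1}, in order to ``fix the parameter $\alpha$''. This is both unnecessary and unavailable here. It is unnecessary because in case 1) the norm maps are already conclusive (no formal survivors remain), and in case 2) the theorem \emph{does not} determine $\alpha$: it is left as a genuine dichotomy, with $\alpha=1$ precisely when the candidate is a square in $\KK$, and the $2$-class number is then read off from \eqref{classnumberofKK case: 7=1 mod 8 and ()=1} with $q(\KK)=2^{5+\alpha}$. It is unavailable because the argument in Theorem \ref{T_7_-1} relied on $\genfrac(){}{0}{p}{q}=-1$, which by Lemma \ref{class numbers of quadratic field} forced $h_2(pq)=h_2(2pq)=2$ and made $h_2(k_5)$ computable; under $\genfrac(){}{0}{p}{q}=1$ these class numbers are only known to be divisible by $4$, so comparing $\tfrac12 h_2(k_5)$ with \eqref{classnumberofKK case: 7=1 mod 8 and ()=1} yields no new constraint on $q(\KK)$. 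Drop that step and your proof is the paper's proof.
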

	\begin{proof} 
	 	\begin{enumerate}[\rm 1)]
			\item 	Assume that   $N(\varepsilon_{2p})=-1$.  By Lemma \ref{units of k1},     $\{\varepsilon_{2}, \varepsilon_{p},	\sqrt{\varepsilon_{2}\varepsilon_{p}\varepsilon_{2p}}\}$  is a  fundamental system of units of $k_1$. Using Lemmas \ref{lm expressions of units q_2q} and \ref{lm expressions of units under cond 3 case: 7=1 mod 8 and ()=1}, we check   that $\{\varepsilon_{2}, \sqrt{\varepsilon_{q}}, \sqrt{\varepsilon_{2q}}\}$ and $\{ \varepsilon_{2}, 	{\varepsilon_{pq}},\sqrt{\varepsilon_{2pq}}\}$ are respectively fundamental systems of units of $k_2$ and $k_3$.
			It follows that,  	$$E_{k_1}E_{k_2}E_{k_3}=\langle-1,  \varepsilon_{2}, \varepsilon_{p},   \sqrt{\varepsilon_{q}}, \sqrt{\varepsilon_{2q}},   {\varepsilon_{pq}} ,\sqrt{ \varepsilon_{2pq}}, \sqrt{\varepsilon_{2}\varepsilon_{p}\varepsilon_{2p}}\rangle.$$	
			Thus we shall determine elements of $E_{k_1}E_{k_2}E_{k_3}$ which are squares in $\KK$. 
			Notice that by Lemma \ref{lm expressions of units under cond 3 case: 7=1 mod 8 and ()=1},
			$\varepsilon_{pq}$ is a square in  $\KK$.
			Let  $\xi$ is an element of $\KK$ which is the  square root of an element of $E_{k_1}E_{k_2}E_{k_3}$. We can assume that
			$$\xi^2=\varepsilon_{2}^a\varepsilon_{p}^b \sqrt{\varepsilon_{q}}^c\sqrt{\varepsilon_{2q}}^d\sqrt{\varepsilon_{2pq}}^f
			\sqrt{\varepsilon_{2}\varepsilon_{p}\varepsilon_{2p}}^g,$$
			where $a, b, c, d,   f$ and $g$ are in $\{0, 1\}$.
			
			\noindent\ding{224}  Let us start	by applying   the norm map $N_{\KK/k_2}=1+\tau_2$. We have 
		$\sqrt{\varepsilon_{2}\varepsilon_{p}\varepsilon_{2p}}^{1+ \tau_2}=(-1)^v\varepsilon_{2}$, for some $v\in\{0,1\}$. By means of 
		\eqref{norm q=7 p=1mod4q} and Table \ref{tab3 case: 7=1 mod 8 and ()=1},   we get:
		\begin{eqnarray*}
			N_{\KK/k_2}(\xi^2)&=&
			\varepsilon_{2}^{2a}\cdot(-1)^b \cdot \varepsilon_{q}^c\cdot\varepsilon_{2q}^d\cdot1 \cdot (-1)^{gv}\varepsilon_{2}^{g}\\
			&=&	\varepsilon_{2}^{2a}  \varepsilon_{q}^c\varepsilon_{2q}^d\cdot(-1)^{b+gv} \varepsilon_{2}^g.
		\end{eqnarray*}
		
		Thus    $b+gv\equiv 0\pmod2$ and $g=0$. Therefore,  $b=0$ and
		$$\xi^2=\varepsilon_{2}^a  \sqrt{\varepsilon_{q}}^c\sqrt{\varepsilon_{2q}}^d \sqrt{\varepsilon_{2pq}}^f.$$	
			
			\noindent\ding{224} Let us apply the norm $N_{\KK/k_5}=1+\tau_1\tau_2$, with $k_5=\QQ(\sqrt{q}, \sqrt{2p})$. By the table \eqref{norm q=7 p=1mod4q} and Table \ref{tab3 case: 7=1 mod 8 and ()=1}, we  have:
		\begin{eqnarray*}
			N_{\KK/k_5}(\xi^2)&=&(-1)^a\cdot (-1)^c\cdot \varepsilon_{  q}^c\cdot(-1)^d \cdot (-1)^f\cdot \varepsilon_{  2pq}^f\\
			&=&	 (-1)^{a+c+d +f}\varepsilon_{  q}^c\cdot \varepsilon_{  2pq}^f.
		\end{eqnarray*}
		Thus $a+c+d +f=0\pmod 2$ and $f=c$. Thus, $a=d $. Therefore, 
		$$\xi^2=\varepsilon_{2}^a  \sqrt{\varepsilon_{q}}^c\sqrt{\varepsilon_{2q}}^a \sqrt{\varepsilon_{2pq}}^c.$$

		\noindent\ding{224} Let us apply the norm $N_{\KK/k_6}=1+\tau_1\tau_3$, with $k_6=\QQ(\sqrt{p}, \sqrt{2q})$. By the table \eqref{norm q=7 p=1mod4q} and Table \ref{tab3 case: 7=1 mod 8 and ()=1}, we  have:
		\begin{eqnarray*}
			N_{\KK/k_6}(\xi^2)&=&(-1)^a\cdot (-1)^c\cdot (-1)^a\cdot \varepsilon_{2  q}^a\cdot  (-1)^c\cdot \varepsilon_{  2pq}^c\\
			&=&	  \varepsilon_{2  q}^a\varepsilon_{  2pq}^c.
		\end{eqnarray*}
		Thus 
		$a=c$. Therefore   
		$$\xi^2=\varepsilon_{2}^a  \sqrt{\varepsilon_{q}}^a\sqrt{\varepsilon_{2q}}^a \sqrt{\varepsilon_{2pq}}^a.$$

	\item For the case $N(\varepsilon_{2p})=1$, we check the result by  the same computations as in the previous theorem.

		\end{enumerate}
	\end{proof}

	 \begin{exam}Using  PARI/GP calculator version  2.15.5 (64bit), Feb 11 2024, we  get the following examples which illustrate the above theorem.
		\begin{enumerate}[\rm $1)$]
			\item Consider the prime numbers $p=41$ and  $q=23$. In this case, the   conditions of the first item of the above theorem are verified and we have:
			$$E_{\KK}=\langle -1,   \varepsilon_{2}, \varepsilon_{p} ,    \sqrt{\varepsilon_{q}}, \sqrt{\varepsilon_{2q}},\sqrt{ \varepsilon_{pq}} ,\sqrt{ \varepsilon_{2pq}}, \sqrt{\varepsilon_{2}\varepsilon_{p}\varepsilon_{2p}} \rangle.$$
			
			Furthermore, we have $h_2(2p)=4$, $h_2(pq)=4$,  $h_2(2pq)=4$ and  $h_2(\KK)=4$.
			\item  Consider the prime numbers $p=17$ and  $q=239$. In this case, the   conditions of the second item of the above theorem are verified and we have:
			$$E_{\KK}=\langle -1,   \varepsilon_{2}, \varepsilon_{p},   \sqrt{\varepsilon_{q}}, \sqrt{\varepsilon_{2q}},  \sqrt{\varepsilon_{pq}} ,\sqrt{ \varepsilon_{2pq}},    \sqrt{\varepsilon_{2}^{a}\varepsilon_{p}^{u}     \sqrt{\varepsilon_{2q}}\sqrt{\varepsilon_{2pq}}
				\sqrt{\varepsilon_{2p}} }  \rangle,$$
			for some 	$a\in\{0,1\}$ such that $a \equiv 1+u\pmod2$.	Furthermore, we have $h_2(2p)=2$, $h_2(pq)=8$,  $h_2(2pq)=4$ and  $h_2(\KK)=8$.
		\end{enumerate}
	\end{exam}

	\begin{theorem}\label{T_7_1-C4} Let $p\equiv 1\pmod{8}$ and $q\equiv7\pmod 8$ be two primes such that     $\genfrac(){}{0}{p}{q} =1$. 
		Put     $\KK=\QQ(\sqrt 2, \sqrt{p}, \sqrt{q} )$.  Assume furthermore that $p(x+1)$ and $v+1$ are squares in $\mathbb{N}$, where $x$ and $v$ are defined in Lemma \ref{lm expressions of units under cond 3 case: 7=1 mod 8 and ()=1}.
		\begin{enumerate}[\rm 1)]
			\item Assume that $N(\varepsilon_{2p})=-1$.  We have
			\begin{enumerate}[\rm $\bullet$]
				
				\item The unit group of $\KK$ is :
				$$E_{\KK}=\langle -1,   \varepsilon_{2}, \varepsilon_{p} ,    \sqrt{\varepsilon_{q}}, \sqrt{\varepsilon_{2q}},\sqrt{ \varepsilon_{pq}} ,\sqrt{ \varepsilon_{2pq}}, \sqrt{\varepsilon_{2}\varepsilon_{p}\varepsilon_{2p}} \rangle.$$
				\item  The $2$-class number  of  $\KK$ equals $ \frac{1}{2^{4}} h_2(2p)h_2(pq)h_2(2pq)$.  
			\end{enumerate}
			\item Assume that $N(\varepsilon_{2p})=1$ and let $a\in\{0,1\}$ such that $a \equiv 1+u\pmod2$.   We have
			
			\begin{enumerate}[\rm $\bullet$]
				\item  The unit group of $\KK$ is :
				$$E_{\KK}=\langle -1,   \varepsilon_{2}, \varepsilon_{p},   \sqrt{\varepsilon_{q}}, \sqrt{\varepsilon_{2q}},  \sqrt{\varepsilon_{pq}} ,\sqrt{ \varepsilon_{2pq}},    \sqrt{\varepsilon_{2}^{a\alpha}\varepsilon_{p}^{u\alpha}     \sqrt{\varepsilon_{ q}}^\alpha\sqrt{\varepsilon_{ pq}}^\alpha
					\sqrt{\varepsilon_{2p}}^{1+\gamma} }  \rangle$$
				where $\alpha$, $\gamma\in \{0,1\}$ such that $\alpha\not=\gamma$ and $\alpha =1$ if and only if $\varepsilon_{2}^a \varepsilon_{p}^u      \sqrt{\varepsilon_{ q}} \sqrt{\varepsilon_{ pq}}\sqrt{\varepsilon_{2p}}$ is a square in $\KK$.
				\item  The $2$-class number  of  $\KK$ equals $ \frac{1}{2^{4-\alpha}} h_2(2p)h_2(pq)h_2(2pq)$.
			\end{enumerate}
		\end{enumerate}
	\end{theorem}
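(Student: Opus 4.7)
The plan is to mirror the proof strategies of Theorems~\ref{T_7_1_C2} and~\ref{T_7_1-C3}, exploiting the fact that in the present sub-case the roles played there by $\sqrt{\varepsilon_{pq}}$ and $\sqrt{\varepsilon_{2pq}}$ are essentially interchanged. Because $v+1$ is a square in $\NN$, Lemma~\ref{lm expressions of units under cond 3 case: 7=1 mod 8 and ()=1} gives $\sqrt{\varepsilon_{pq}}\in k_3$; because $p(x+1)$ is a square, $\sqrt{\varepsilon_{2pq}}$ lies in $k_5=\QQ(\sqrt{q},\sqrt{2p})$ (obtained by dividing the expression $y_1\sqrt{p}+y_2\sqrt{2q}\in k_6$ by $\sqrt{2}$) but not in $k_3$. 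So the fundamental system of units of $k_3$ I would use is $\{\varepsilon_2,\sqrt{\varepsilon_{pq}},\varepsilon_{2pq}\}$, and in particular $\varepsilon_{2pq}=(\sqrt{\varepsilon_{2pq}})^2$ is automatically a square in $\KK$. The fundamental systems for $k_1$ and $k_2$ come from Lemmas~\ref{units of k1} and~\ref{lm expressions of units q_2q} as in the earlier theorems.

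In each sub-case $N(\varepsilon_{2p})=\pm 1$, I would pick an arbitrary $\xi\in\KK$ with $\xi^2\in E_{k_1}E_{k_2}E_{k_3}$ and, after absorbing the $\varepsilon_{2pq}$-power using that $\sqrt{\varepsilon_{2pq}}\in\KK$, normalise it to
$$\xi^2=\varepsilon_2^a\varepsilon_p^b\sqrt{\varepsilon_q}^c\sqrt{\varepsilon_{2q}}^d\sqrt{\varepsilon_{pq}}^e\,\delta^g,$$
with $\delta=\sqrt{\varepsilon_2\varepsilon_p\varepsilon_{2p}}$ in Case~1 and $\delta=\sqrt{\varepsilon_{2p}}$ in Case~2, all exponents in $\{0,1\}$. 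I would then successively apply the norms $N_{\KK/k_j}$ for $j=2,5,6,4$, reading the action of each norm on every generator off from~\eqref{norm q=7 p=1mod4q}, Lemma~\ref{lm noms esp_2p} and the row ``$v+1$ is a square'' of Table~\ref{tab3 case: 7=1 mod 8 and ()=1} (the $\sqrt{\varepsilon_{2pq}}$ rows are not needed since that generator has been absorbed). Requiring each image to be a square in the corresponding biquadratic subfield yields an $\mathbb{F}_2$-linear system on the six exponents.

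In Case~1 the four norms should force every exponent to vanish, so the only new square in $E_{k_1}E_{k_2}E_{k_3}$ realised in $\KK$ is $\varepsilon_{2pq}$, producing the claimed unit group; then $q(\KK)=2^5$ and Lemmas~\ref{wada's f.} and~\ref{class numbers of quadratic field} yield $h_2(\KK)=\tfrac{1}{2^{4}}h_2(2p)h_2(pq)h_2(2pq)$. In Case~2 the extra generator $\sqrt{\varepsilon_{2p}}$ couples the $\varepsilon_2,\varepsilon_p$ slots via the parameter $u$ of Lemma~\ref{lm noms esp_2p}, and I expect the system to leave exactly one nontrivial candidate family, namely $\xi^2=\varepsilon_2^a\varepsilon_p^u\sqrt{\varepsilon_q}\sqrt{\varepsilon_{pq}}\sqrt{\varepsilon_{2p}}$ with $a\equiv u+1\pmod 2$. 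Whether this element is actually a square in $\KK$ is encoded by the indicator $\alpha$; in either case the stated generator list is obtained, and the class number $\tfrac{1}{2^{4-\alpha}}h_2(2p)h_2(pq)h_2(2pq)$ then follows from the class-number formula since each additional honest square root doubles $q(\KK)$.

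The main obstacle will be the bookkeeping in Case~2: tracking the $(-1)$-signs and the $u$-dependencies across the four norm applications, and ensuring that no spurious element of $E_{k_1}E_{k_2}E_{k_3}$ survives as a square in $\KK$ besides those listed. The analogy with Theorem~\ref{T_7_1_C2} under the substitution $(q,pq)\leftrightarrow(2q,2pq)$ provides a valuable consistency check on the shape of the final constraints.
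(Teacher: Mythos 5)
Your proposal follows the paper's proof essentially verbatim: the same fundamental systems $\{\varepsilon_2,\sqrt{\varepsilon_q},\sqrt{\varepsilon_{2q}}\}$ for $k_2$ and $\{\varepsilon_2,\varepsilon_{2pq},\sqrt{\varepsilon_{pq}}\}$ for $k_3$, the observation that $\sqrt{\varepsilon_{2pq}}\in k_5\subset\KK$ so $\varepsilon_{2pq}$ can be absorbed, the same sequence of norm maps $N_{\KK/k_j}$ for $j=2,5,6,4$ read off from \eqref{norm q=7 p=1mod4q} and Table \ref{tab3 case: 7=1 mod 8 and ()=1}, and the same final constraints (all exponents vanish in Case 1; the single candidate $\varepsilon_2^a\varepsilon_p^u\sqrt{\varepsilon_q}\sqrt{\varepsilon_{pq}}\sqrt{\varepsilon_{2p}}$ with $a\equiv u+1\pmod 2$ in Case 2). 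The only caveat is that you state the outcome of the $\mathbb{F}_2$-linear elimination as an expectation rather than carrying it out, but the outcome you predict is exactly what the paper's computation yields.
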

	\begin{proof} 
	 	\begin{enumerate}[\rm 1)]
			\item 	Assume that   $N(\varepsilon_{2p})=-1$.  By Lemma \ref{units of k1},     $\{\varepsilon_{2}, \varepsilon_{p},	\sqrt{\varepsilon_{2}\varepsilon_{p}\varepsilon_{2p}}\}$  is a  fundamental system of units of $k_1$. Using Lemmas \ref{lm expressions of units q_2q} and \ref{lm expressions of units under cond 3 case: 7=1 mod 8 and ()=1}, we check   that $\{\varepsilon_{2}, \sqrt{\varepsilon_{q}}, \sqrt{\varepsilon_{2q}}\}$ and $\{ \varepsilon_{2}, 	{\varepsilon_{2pq}},\sqrt{\varepsilon_{pq}}\}$ are respectively fundamental systems of units of $k_2$ and $k_3$.
			It follows that,  	$$E_{k_1}E_{k_2}E_{k_3}=\langle-1,  \varepsilon_{2}, \varepsilon_{p},   \sqrt{\varepsilon_{q}}, \sqrt{\varepsilon_{2q}},   {\varepsilon_{2pq}} ,\sqrt{ \varepsilon_{pq}}, \sqrt{\varepsilon_{2}\varepsilon_{p}\varepsilon_{2p}}\rangle.$$	
			Thus we shall determine elements of $E_{k_1}E_{k_2}E_{k_3}$ which are squares in $\KK$. 
			Notice that by Lemma \ref{lm expressions of units under cond 3 case: 7=1 mod 8 and ()=1},
			$\varepsilon_{2pq}$ is a square in  $\KK$.
			Let  $\xi$ is an element of $\KK$ which is the  square root of an element of $E_{k_1}E_{k_2}E_{k_3}$. We can assume that
			$$\xi^2=\varepsilon_{2}^a\varepsilon_{p}^b \sqrt{\varepsilon_{q}}^c\sqrt{\varepsilon_{2q}}^d\sqrt{\varepsilon_{pq}}^f
			\sqrt{\varepsilon_{2}\varepsilon_{p}\varepsilon_{2p}}^g,$$
			where $a, b, c, d,   f$ and $g$ are in $\{0, 1\}$.

 	\noindent\ding{224}  Let us start	by applying   the norm map $N_{\KK/k_2}=1+\tau_2$. We have 
 $\sqrt{\varepsilon_{2}\varepsilon_{p}\varepsilon_{2p}}^{1+ \tau_2}=(-1)^v\varepsilon_{2}$, for some $v\in\{0,1\}$. By means of 
 \eqref{norm q=7 p=1mod4q} and Table \ref{tab3 case: 7=1 mod 8 and ()=1},   we get:
 \begin{eqnarray*}
 	N_{\KK/k_2}(\xi^2)&=&
 	\varepsilon_{2}^{2a}\cdot(-1)^b \cdot \varepsilon_{q}^c\cdot\varepsilon_{2q}^d\cdot1 \cdot (-1)^{gv}\varepsilon_{2}^{g}\\
 	&=&	\varepsilon_{2}^{2a}  \varepsilon_{q}^c\varepsilon_{2q}^d\cdot(-1)^{b+  gv} \varepsilon_{2}^g.
 \end{eqnarray*}
 
 Thus    $b+  gv= 0\pmod2$ and $g=0$. Therefore,  $b= 0$ and
$$\xi^2=\varepsilon_{2}^a  \sqrt{\varepsilon_{q}}^c\sqrt{\varepsilon_{2q}}^d\sqrt{\varepsilon_{pq}}^f.$$	
 
 \noindent\ding{224} Let us apply the norm $N_{\KK/k_5}=1+\tau_1\tau_2$, with $k_5=\QQ(\sqrt{q}, \sqrt{2p})$. By the table \eqref{norm q=7 p=1mod4q} and Table \ref{tab3 case: 7=1 mod 8 and ()=1}, we  have:	
 \begin{eqnarray*}
 	N_{\KK/k_5}(\xi^2)&=&(-1)^a \cdot (-1)^c\cdot \varepsilon_{  q}^c\cdot(-1)^d \cdot (-1)^f \\
 	&=&	 (-1)^{a+ c+d +f }\varepsilon_{  q}^c   .
 \end{eqnarray*}
 Thus $a+c+d+f  =0\pmod 2$ and $c=0$. Thus, $a+ d+f  =0\pmod 2$. Therefore, 
$$\xi^2=\varepsilon_{2}^a   \sqrt{\varepsilon_{2q}}^d\sqrt{\varepsilon_{pq}}^f.$$

  \noindent\ding{224} Let us apply the norm $N_{\KK/k_6}=1+\tau_1\tau_3$, with $k_6=\QQ(\sqrt{p}, \sqrt{2q})$. By the table \eqref{norm q=7 p=1mod4q} and Table \ref{tab3 case: 7=1 mod 8 and ()=1}, we  have:
 \begin{eqnarray*}
 	N_{\KK/k_6}(\xi^2)&=&(-1)^a  \cdot (-1)^d\cdot \varepsilon_{2  q}^d\cdot  (-1)^f\\
 	&=&	   (-1)^{a+d+f}\varepsilon_{2  q}^d.
 \end{eqnarray*}
 Thus $d=0$ and
 $a=f$. Therefore,
 $$\xi^2=\varepsilon_{2}^a    \sqrt{\varepsilon_{pq}}^a.$$

	\noindent\ding{224} Let us apply the norm $N_{\KK/k_4}=1+\tau_1$, with $k_4=\QQ(\sqrt{p}, \sqrt{q})$.  By the table \eqref{norm q=7 p=1mod4q} and Table \ref{tab3 case: 7=1 mod 8 and ()=1}, we  have:		
\begin{eqnarray*}
	N_{\KK/k_4}(\xi^2)&=&(-1)^a\cdot (-1)^a\cdot \varepsilon_{  pq}^a=\varepsilon_{  pq}^a.
\end{eqnarray*}
Thus   $a=0$.
It follows that the only element of $E_{k_1}E_{k_2}E_{k_3}$ that is a square in $\KK$ is $\varepsilon_{2pq}$ and so we have  the first item.

	\item  Assume that   $N(\varepsilon_{2p})=1$. So we have:
	 $$E_{k_1}E_{k_2}E_{k_3}=\langle-1,  \varepsilon_{2}, \varepsilon_{p},   \sqrt{\varepsilon_{q}}, \sqrt{\varepsilon_{2q}},   {\varepsilon_{2pq}} ,\sqrt{ \varepsilon_{pq}}, \sqrt{\varepsilon_{2p}}\rangle.$$	
	To determine the elements of $E_{k_1}E_{k_2}E_{k_3}$ which are squares in $\KK$, let us consider $\xi$   an element of $\KK$ which is the  square root of an element of $E_{k_1}E_{k_2}E_{k_3}$. As $\varepsilon_{2pq}$ is a square in  $\KK$, we can assume that
	$$\xi^2=\varepsilon_{2}^a\varepsilon_{p}^b \sqrt{\varepsilon_{q}}^c\sqrt{\varepsilon_{2q}}^d\sqrt{\varepsilon_{pq}}^f
	\sqrt{\varepsilon_{2p}}^g,$$
	where $a, b, c, d,   f$ and $g$ are in $\{0, 1\}$.
	
	\noindent\ding{224}  Let us start	by applying   the norm map $N_{\KK/k_2}=1+\tau_2$.  By \eqref{T_3_-1_eqi2p_N=1}, 
	\eqref{norm q=7 p=1mod4q} and Table \ref{tab3 case: 7=1 mod 8 and ()=1},   we have:
	\begin{eqnarray*}
		N_{\KK/k_2}(\xi^2)&=&
		\varepsilon_{2}^{2a}\cdot(-1)^b \cdot \varepsilon_{q}^c\cdot\varepsilon_{2q}^d\cdot1 \cdot (-1)^{gu} \\
		&=&	\varepsilon_{2}^{2a}  \varepsilon_{q}^c\varepsilon_{2q}^d\cdot(-1)^{b+ gu}  .
	\end{eqnarray*}
	
	Thus    $b+  gu= 0\pmod2$. 
	
	 \noindent\ding{224} Let us apply the norm $N_{\KK/k_5}=1+\tau_1\tau_2$, with $k_5=\QQ(\sqrt{q}, \sqrt{2p})$. By the table \eqref{norm q=7 p=1mod4q} and Table \ref{tab3 case: 7=1 mod 8 and ()=1}, we  have:	
	\begin{eqnarray*}
		N_{\KK/k_5}(\xi^2)&=&(-1)^a \cdot (-1)^b \cdot(-1)^c\cdot \varepsilon_{  q}^c\cdot(-1)^d \cdot (-1)^f \cdot (-1)^g\cdot \varepsilon_{  2p}^g\\
		&=&	 (-1)^{a+b+ c+d +f+g }\varepsilon_{  q}^c\cdot \varepsilon_{  2p}^g  .
	\end{eqnarray*}
	Thus $c=g$ and so $a+b+  d +f   =0\pmod 2$. Thus, $a+ d+f  =0\pmod 2$. Therefore, 
$$\xi^2=\varepsilon_{2}^a\varepsilon_{p}^b \sqrt{\varepsilon_{q}}^c\sqrt{\varepsilon_{2q}}^d\sqrt{\varepsilon_{pq}}^f
\sqrt{\varepsilon_{2p}}^c.$$
	
\noindent\ding{224} Let us apply the norm $N_{\KK/k_6}=1+\tau_1\tau_3$, with $k_6=\QQ(\sqrt{p}, \sqrt{2q})$. By the table \eqref{norm q=7 p=1mod4q} and Table \ref{tab3 case: 7=1 mod 8 and ()=1}, we  have:
\begin{eqnarray*}
	N_{\KK/k_6}(\xi^2)&=&(-1)^a\cdot \varepsilon_{p}^{2b}  \cdot(-1)^c  \cdot (-1)^d\cdot \varepsilon_{2  q}^d\cdot  (-1)^f\cdot  (-1)^{cu+c}\\
	&=&	  \varepsilon_{p}^{2b} (-1)^{a+d+f+cu}\varepsilon_{2  q}^d.
\end{eqnarray*}	
	Thus  $d=0$ and so  $a+ f+cu= 0\pmod2$.	Therefore,
	$$\xi^2=\varepsilon_{2}^a\varepsilon_{p}^b \sqrt{\varepsilon_{q}}^c \sqrt{\varepsilon_{pq}}^f
	\sqrt{\varepsilon_{2p}}^c.$$
	
	\noindent\ding{224} Let us apply the norm $N_{\KK/k_4}=1+\tau_1$, with $k_4=\QQ(\sqrt{p}, \sqrt{q})$.  By the table \eqref{norm q=7 p=1mod4q} and Table \ref{tab3 case: 7=1 mod 8 and ()=1}, we  have:		
\begin{eqnarray*}
	N_{\KK/k_4}(\xi^2)&=&(-1)^a\cdot \varepsilon_{p}^{2b}\cdot (-1)^c\cdot \varepsilon_{q}^{c}\cdot(-1)^f\cdot \varepsilon_{  pq}^f\cdot(-1)^{cu+c}, \\
	&=&\varepsilon_{p}^{2b}(-1)^{a+f+cu}\varepsilon_{q}^{c}\varepsilon_{  pq}^f.
\end{eqnarray*}
Thus   $c=f$. Since  $b+  gu= 0=b+cu\pmod2$, we have	 
	$$\xi^2=\varepsilon_{2}^a\varepsilon_{p}^{cu} \sqrt{\varepsilon_{q}}^c \sqrt{\varepsilon_{pq}}^c
	\sqrt{\varepsilon_{2p}}^c,$$
with $a+ c+cu= 0\pmod2$.		
\end{enumerate}
\end{proof}

	 \begin{exam}Using  PARI/GP calculator version  2.15.5 (64bit), Feb 11 2024, we  get the following examples which illustrate the above theorem.
		\begin{enumerate}[\rm $1)$]
			\item Consider the prime numbers $p=313$ and  $q=463$. In this case, the   conditions of the first item of the above theorem are verified and we have:
		$$E_{\KK}=\langle -1,   \varepsilon_{2}, \varepsilon_{p} ,    \sqrt{\varepsilon_{q}}, \sqrt{\varepsilon_{2q}},\sqrt{ \varepsilon_{pq}} ,\sqrt{ \varepsilon_{2pq}}, \sqrt{\varepsilon_{2}\varepsilon_{p}\varepsilon_{2p}} \rangle.$$
			
			Furthermore, we have $h_2(2p)=4$, $h_2(pq)=8$,  $h_2(2pq)=4$ and  $h_2(\KK)=8$.
			\item Consider the prime numbers $p=17$ and  $q=223$. In this case, the   conditions of the second item of the above theorem are verified and we have:
			$$E_{\KK}=\langle -1,   \varepsilon_{2}, \varepsilon_{p},   \sqrt{\varepsilon_{q}}, \sqrt{\varepsilon_{2q}},  \sqrt{\varepsilon_{pq}} ,\sqrt{ \varepsilon_{2pq}},    \sqrt{\varepsilon_{2}^{a}\varepsilon_{p}^{u}     \sqrt{\varepsilon_{ q}}\sqrt{\varepsilon_{ pq}}
				\sqrt{\varepsilon_{2p}} }  \rangle$$
			for some 	$a\in\{0,1\}$ such that $a \equiv 1+u\pmod2$.	Furthermore, we have $h_2(2p)=2$, $h_2(pq)=4$,  $h_2(2pq)=8$ and  $h_2(\KK)=8$.
		\end{enumerate}
	\end{exam}

\begin{theorem}\label{T_7_1-C5} Let $p\equiv 1\pmod{8}$ and $q\equiv7\pmod 8$ be two primes such that     $\genfrac(){}{0}{p}{q} =1$. 
	Put     $\KK=\QQ(\sqrt 2, \sqrt{p}, \sqrt{q} )$.  Assume furthermore that $p(x+1)$ and $p(v+1)$ are squares in $\mathbb{N}$, where $x$ and $v$ are defined in Lemma \ref{lm expressions of units under cond 3 case: 7=1 mod 8 and ()=1}.
	\begin{enumerate}[\rm 1)]
		\item Assume that $N(\varepsilon_{2p})=-1$.  We have
		\begin{enumerate}[\rm $\bullet$]
			
			\item The unit group of $\KK$ is :
			$$E_{\KK}=\langle -1,   \varepsilon_{2}, \varepsilon_{p} ,    \sqrt{\varepsilon_{q}}, \sqrt{\varepsilon_{2q}},\sqrt{ \varepsilon_{pq}} ,\sqrt{ \varepsilon_{2pq}}, \sqrt{\varepsilon_{2}\varepsilon_{p}\varepsilon_{2p}} \rangle.$$
			\item  The $2$-class number  of  $\KK$ equals $ \frac{1}{2^{4}} h_2(2p)h_2(pq)h_2(2pq)$.  
		\end{enumerate}
		\item Assume that $N(\varepsilon_{2p})=1$ and let $a\in\{0,1\}$ such that $a \equiv 1+u\pmod2$.   We have
		
		\begin{enumerate}[\rm $\bullet$]
			\item  The unit group of $\KK$ is :
			$$E_{\KK}=\langle -1,   \varepsilon_{2}, \varepsilon_{p},   \sqrt{\varepsilon_{q}}, \sqrt{\varepsilon_{2q}},  \sqrt{\varepsilon_{pq}} ,\sqrt{ \varepsilon_{2pq}},    \sqrt{    \varepsilon_{2}^{a\alpha}\varepsilon_{p}^{u\alpha} \sqrt{\varepsilon_{q}}^\alpha\sqrt{\varepsilon_{2q}}^\alpha\sqrt{\varepsilon_{pq}\varepsilon_{2pq}}^\alpha
				\sqrt{\varepsilon_{2p}}^{1+\gamma}  }  \rangle$$
			where $\alpha$, $\gamma\in \{0,1\}$ such that $\alpha\not=\gamma$ and $\alpha =1$ if and only if $\varepsilon_{2}^{a}\varepsilon_{p}^{u} \sqrt{\varepsilon_{q}}\sqrt{\varepsilon_{2q}}\sqrt{\varepsilon_{pq}\varepsilon_{2pq}}
			\sqrt{\varepsilon_{2p}}$ is a square in $\KK$.
			\item  The $2$-class number  of  $\KK$ equals $ \frac{1}{2^{4-\alpha}} h_2(2p)h_2(pq)h_2(2pq)$.
		\end{enumerate}
	\end{enumerate}
\end{theorem}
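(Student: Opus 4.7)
My plan is to follow exactly the template of the three preceding theorems \ref{T_7_1_C2}--\ref{T_7_1-C4}: first determine a fundamental system of units (f.s.u.) for each of the biquadratic subfields $k_1,k_2,k_3$, form $E_{k_1}E_{k_2}E_{k_3}$, write a generic candidate $\xi^2$ in this group with unknown exponents in $\{0,1\}$, apply in succession the relative norms $N_{\KK/k_i}$ to force parity conditions, and finally feed the resulting $q(\KK)$ into Wada's formula via \eqref{classnumberofKK case: 7=1 mod 8 and ()=1}.

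The preliminary step is to exploit both assumptions $p(v+1)$ and $p(x+1)$ being squares in $\NN$. By item 2.ii.b (resp.\ item 1.ii.b) of Lemma \ref{lm expressions of units under cond 3 case: 7=1 mod 8 and ()=1}, we have $\sqrt{2\varepsilon_{pq}}=w_1\sqrt p+w_2\sqrt q$ and $\sqrt{2\varepsilon_{2pq}}=y_1\sqrt p+y_2\sqrt{2q}$; dividing by $\sqrt 2\in\KK$ shows that both $\sqrt{\varepsilon_{pq}}$ and $\sqrt{\varepsilon_{2pq}}$ already lie in $\KK$. Multiplying the two expressions termwise gives
\[
\sqrt{\varepsilon_{pq}\varepsilon_{2pq}}=\tfrac12\bigl(w_1y_1\,p+w_2y_2\,q\sqrt 2+w_2y_1\sqrt{pq}+w_1y_2\sqrt{2pq}\bigr)\in k_3.
\]
This is the structural novelty of the present case: in contrast with Theorems \ref{T_7_1-C3} and \ref{T_7_1-C4}, neither $\sqrt{\varepsilon_{pq}}$ nor $\sqrt{\varepsilon_{2pq}}$ individually belongs to $k_3$, but their product does. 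Hence a f.s.u.\ of $k_3$ is $\{\varepsilon_2,\varepsilon_{pq},\sqrt{\varepsilon_{pq}\varepsilon_{2pq}}\}$, and combined with $\{\varepsilon_2,\sqrt{\varepsilon_q},\sqrt{\varepsilon_{2q}}\}$ for $k_2$ (Lemma \ref{lm expressions of units q_2q}) and the system of Lemma \ref{units of k1} for $k_1$, this yields an explicit description of $E_{k_1}E_{k_2}E_{k_3}$.

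For Part 1 ($N(\varepsilon_{2p})=-1$), I will write $\xi^2=\varepsilon_2^a\varepsilon_p^b\sqrt{\varepsilon_q}^c\sqrt{\varepsilon_{2q}}^d\sqrt{\varepsilon_{pq}}^e\sqrt{\varepsilon_{2pq}}^f\sqrt{\varepsilon_2\varepsilon_p\varepsilon_{2p}}^g$, absorbing the $k_3$-generators $\varepsilon_{pq}$ and $\sqrt{\varepsilon_{pq}\varepsilon_{2pq}}$ into $\sqrt{\varepsilon_{pq}}$ and $\sqrt{\varepsilon_{2pq}}$. Applying in turn $N_{\KK/k_2}=1+\tau_2$, $N_{\KK/k_5}=1+\tau_1\tau_2$, $N_{\KK/k_6}=1+\tau_1\tau_3$, and $N_{\KK/k_4}=1+\tau_1$, using the signs from \eqref{norm q=7 p=1mod4q} and Table \ref{tab3 case: 7=1 mod 8 and ()=1}, I expect the parity conditions to cascade and force $a=b=c=d=e=f=g=0$. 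Consequently no new square root beyond those already listed exists in $\KK$, giving $q(\KK)=2^{5}$, from which \eqref{classnumberofKK case: 7=1 mod 8 and ()=1} yields the stated class number.

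For Part 2 ($N(\varepsilon_{2p})=1$) the same calculation runs with $\sqrt{\varepsilon_{2p}}$ replacing $\sqrt{\varepsilon_2\varepsilon_p\varepsilon_{2p}}$, and the table \eqref{T_3_-1_eqi2p_N=1} injects $u$-dependent signs into each norm, producing four parity equations analogous to \eqref{2eq1}--\eqref{2eq3}. I expect the only nontrivial combination that can survive the four congruences to be exactly $\varepsilon_2^{a}\varepsilon_p^{u}\sqrt{\varepsilon_q}\sqrt{\varepsilon_{2q}}\sqrt{\varepsilon_{pq}\varepsilon_{2pq}}\sqrt{\varepsilon_{2p}}$ with $a\equiv u+1\pmod 2$; whether it is actually a square in $\KK$ is recorded by the parameter $\alpha\in\{0,1\}$, with $\gamma=1-\alpha$, and the class number is then read off from $q(\KK)=2^{5+\alpha}$ via Wada's formula. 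The main obstacle I anticipate is precisely this last step: the sign bookkeeping has to be carried out carefully enough to show that \emph{this} combination, and no smaller subcombination, is the unique candidate compatible with all four norm conditions, so that the $\alpha$-dichotomy is clean rather than producing several unrelated candidates or overdetermining the system.
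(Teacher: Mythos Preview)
Your plan follows the paper's template closely---the same fundamental systems for $k_1,k_2,k_3$, the same four norm maps $1+\tau_2,\ 1+\tau_1\tau_2,\ 1+\tau_1\tau_3,\ 1+\tau_1$, and the same appeal to \eqref{classnumberofKK case: 7=1 mod 8 and ()=1} for the class number---so in spirit it coincides with the paper's proof.

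There is, however, one genuine slip in your parametrisation. Wada's method guarantees only that $\xi^{2}\in E_{k_1}E_{k_2}E_{k_3}$, and in the present case a system of generators for that product is
\[
\{-1,\ \varepsilon_{2},\ \varepsilon_{p},\ \sqrt{\varepsilon_{q}},\ \sqrt{\varepsilon_{2q}},\ \varepsilon_{pq},\ \sqrt{\varepsilon_{pq}\varepsilon_{2pq}},\ \sqrt{\varepsilon_{2}\varepsilon_{p}\varepsilon_{2p}}\,\}.
\]
Your ``absorption'' replaces the pair $\varepsilon_{pq},\sqrt{\varepsilon_{pq}\varepsilon_{2pq}}$ by independent factors $\sqrt{\varepsilon_{pq}}^{\,e}\sqrt{\varepsilon_{2pq}}^{\,f}$ with $e,f\in\{0,1\}$; but a tuple with $e\ne f$ represents an element \emph{outside} $E_{k_1}E_{k_2}E_{k_3}$, since neither $\sqrt{\varepsilon_{pq}}$ nor $\sqrt{\varepsilon_{2pq}}$ individually lies in any $k_i$. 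In effect you have passed to the strictly larger group $G=\langle E_{k_1}E_{k_2}E_{k_3},\sqrt{\varepsilon_{pq}}\rangle$, which introduces one extra binary parameter. Running your four norms on this enlarged ansatz does \emph{not} force everything to zero: after the four steps one is left (in Part~1) with the residual candidate $\varepsilon_{p}\sqrt{\varepsilon_{2q}}\sqrt{\varepsilon_{2pq}}$, which all four norm tests let through. The paper avoids this precisely by writing $\xi^{2}$ with the single exponent on $\sqrt{\varepsilon_{pq}\varepsilon_{2pq}}$ (and noting $\varepsilon_{pq}$ is already a square), so that only six exponents need to be killed and the four norms succeed.

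The fix is immediate: either parametrise as the paper does (i.e.\ impose $e=f$ from the outset, which is exactly the Wada constraint $\xi^{2}\in E_{k_1}E_{k_2}E_{k_3}$), or add the fifth norm $N_{\KK/k_3}=1+\tau_2\tau_3$, which in this case yields $\varepsilon_{pq}^{e}\varepsilon_{2pq}^{f}$ up to squares and hence forces $e\equiv f\pmod 2$ because $\sqrt{\varepsilon_{pq}\varepsilon_{2pq}}\in k_3$. With that one correction your outline becomes the paper's proof verbatim, for both Part~1 and Part~2.
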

\begin{proof}

 	\begin{enumerate}[\rm 1)]
		\item 		Assume that   $N(\varepsilon_{2p})=-1$.  By Lemma \ref{units of k1},     $\{\varepsilon_{2}, \varepsilon_{p},	\sqrt{\varepsilon_{2}\varepsilon_{p}\varepsilon_{2p}}\}$  is a  fundamental system of units of $k_1$. Using Lemmas \ref{lm expressions of units q_2q} and \ref{lm expressions of units under cond 3 case: 7=1 mod 8 and ()=1}, we check   that
		 $\{\varepsilon_{2}, \sqrt{\varepsilon_{q}}, \sqrt{\varepsilon_{2q}}\}$ and $\{ \varepsilon_{2}, 	{\varepsilon_{ pq}},\sqrt{\varepsilon_{pq}\varepsilon_{2pq}}\}$ are respectively fundamental systems of units of $k_2$ and $k_3$.
		It follows that,  	$$E_{k_1}E_{k_2}E_{k_3}=\langle-1,  \varepsilon_{2}, \varepsilon_{p},   \sqrt{\varepsilon_{q}}, \sqrt{\varepsilon_{2q}},   {\varepsilon_{ pq}} ,\sqrt{ \varepsilon_{pq}\varepsilon_{2pq}}, \sqrt{\varepsilon_{2}\varepsilon_{p}\varepsilon_{2p}}\rangle.$$	
		Thus we shall determine elements of $E_{k_1}E_{k_2}E_{k_3}$ which are squares in $\KK$. 
		Notice that by Lemma \ref{lm expressions of units under cond 3 case: 7=1 mod 8 and ()=1},
		$\varepsilon_{pq}$ is a square in  $\KK$.
		Let  $\xi$ is an element of $\KK$ which is the  square root of an element of $E_{k_1}E_{k_2}E_{k_3}$. We can assume that
		$$\xi^2=\varepsilon_{2}^a\varepsilon_{p}^b \sqrt{\varepsilon_{q}}^c\sqrt{\varepsilon_{2q}}^d\sqrt{\varepsilon_{pq}\varepsilon_{2pq}}^f 
		\sqrt{\varepsilon_{2}\varepsilon_{p}\varepsilon_{2p}}^g,$$
		where $a, b, c, d,   f$ and $g$ are in $\{0, 1\}$.

		\noindent\ding{224}  Let us start	by applying   the norm map $N_{\KK/k_2}=1+\tau_2$. We have 
		$\sqrt{\varepsilon_{2}\varepsilon_{p}\varepsilon_{2p}}^{1+ \tau_2}=(-1)^v\varepsilon_{2}$, for some $v\in\{0,1\}$. By means of 
		\eqref{norm q=7 p=1mod4q} and Table \ref{tab3 case: 7=1 mod 8 and ()=1},   we get:
		\begin{eqnarray*}
			N_{\KK/k_2}(\xi^2)&=&
			\varepsilon_{2}^{2a}\cdot(-1)^b \cdot \varepsilon_{q}^c\cdot\varepsilon_{2q}^d\cdot1 \cdot (-1)^{gv}\varepsilon_{2}^{g}\\
			&=&	\varepsilon_{2}^{2a}  \varepsilon_{q}^c\varepsilon_{2q}^d\cdot(-1)^{b+  gv} \varepsilon_{2}^g.
		\end{eqnarray*}
		
		Thus    $b+  gv= 0\pmod2$ and $g=0$. Therefore,  $b= 0$ and
	$$\xi^2=\varepsilon_{2}^a  \sqrt{\varepsilon_{q}}^c\sqrt{\varepsilon_{2q}}^d\sqrt{\varepsilon_{pq}\varepsilon_{2pq}}^f 
 .$$
 
  \noindent\ding{224} Let us apply the norm $N_{\KK/k_5}=1+\tau_1\tau_2$, with $k_5=\QQ(\sqrt{q}, \sqrt{2p})$. By the table \eqref{norm q=7 p=1mod4q} and Table \ref{tab3 case: 7=1 mod 8 and ()=1}, we  have:	
 \begin{eqnarray*}
 	N_{\KK/k_5}(\xi^2)&=&(-1)^a   \cdot(-1)^c\cdot \varepsilon_{  q}^c\cdot(-1)^d \cdot\varepsilon_{2pq}^f  \\
 	&=&	\varepsilon_{2pq}^f (-1)^{a+ c+d  }\varepsilon_{  q}^c   .
 \end{eqnarray*}
 Thus $c=0$ and so $a =d$.   Therefore, 
 $$\xi^2=\varepsilon_{2}^a   \sqrt{\varepsilon_{2q}}^a\sqrt{\varepsilon_{pq}\varepsilon_{2pq}}^f 
 .$$
 
	\noindent\ding{224} Let us apply the norm $N_{\KK/k_6}=1+\tau_1\tau_3$, with $k_6=\QQ(\sqrt{p}, \sqrt{2q})$. By the table \eqref{norm q=7 p=1mod4q} and Table \ref{tab3 case: 7=1 mod 8 and ()=1}, we  have:
	\begin{eqnarray*}
		N_{\KK/k_6}(\xi^2)&=&(-1)^a  \cdot (-1)^a\cdot \varepsilon_{2  q}^a\cdot  \varepsilon_{2pq}^f\\
		&=&	    \varepsilon_{2  q}^a\varepsilon_{2pq}^f.
	\end{eqnarray*}	
	Thus  $a =f$.	Therefore,
	 $$\xi^2=\varepsilon_{2}^a   \sqrt{\varepsilon_{2q}}^a\sqrt{\varepsilon_{pq}\varepsilon_{2pq}}^a .$$

	\noindent\ding{224} Let us apply the norm $N_{\KK/k_4}=1+\tau_1$, with $k_4=\QQ(\sqrt{p}, \sqrt{q})$.  By the table \eqref{norm q=7 p=1mod4q} and Table \ref{tab3 case: 7=1 mod 8 and ()=1}, we  have:		
\begin{eqnarray*}
	N_{\KK/k_4}(\xi^2)&=&(-1)^a\cdot (-1)^a\cdot \varepsilon_{  pq}^a=\varepsilon_{  pq}^a.
\end{eqnarray*}
Thus   $a=0$.
It follows that the only element of $E_{k_1}E_{k_2}E_{k_3}$ that is a square in $\KK$ is $\varepsilon_{pq}$ and so we have  the first item.

	\item  Assume that   $N(\varepsilon_{2p})=1$. So we have:
	$$E_{k_1}E_{k_2}E_{k_3}=\langle-1,  \varepsilon_{2}, \varepsilon_{p},   \sqrt{\varepsilon_{q}}, \sqrt{\varepsilon_{2q}},   {\varepsilon_{ pq}} ,\sqrt{ \varepsilon_{pq}\varepsilon_{2pq}}, \sqrt{\varepsilon_{2p}}\rangle.$$	
	To determine the elements of $E_{k_1}E_{k_2}E_{k_3}$ which are squares in $\KK$, let us consider $\xi$   an element of $\KK$ which is the  square root of an element of $E_{k_1}E_{k_2}E_{k_3}$. As $\varepsilon_{pq}$ is a square in  $\KK$, we can assume that
	$$\xi^2=\varepsilon_{2}^a\varepsilon_{p}^b \sqrt{\varepsilon_{q}}^c\sqrt{\varepsilon_{2q}}^d\sqrt{\varepsilon_{pq}\varepsilon_{2pq}}^f
	\sqrt{\varepsilon_{2p}}^g,$$
	where $a, b, c, d,   f$ and $g$ are in $\{0, 1\}$.
	
	\noindent\ding{224}  Let us start	by applying   the norm map $N_{\KK/k_2}=1+\tau_2$.  By \eqref{T_3_-1_eqi2p_N=1}, 
	\eqref{norm q=7 p=1mod4q} and Table \ref{tab3 case: 7=1 mod 8 and ()=1},   we have:
	\begin{eqnarray*}
		N_{\KK/k_2}(\xi^2)&=&
		\varepsilon_{2}^{2a}\cdot(-1)^b \cdot \varepsilon_{q}^c\cdot\varepsilon_{2q}^d\cdot1 \cdot (-1)^{gu} \\
		&=&	\varepsilon_{2}^{2a}  \varepsilon_{q}^c\varepsilon_{2q}^d\cdot(-1)^{b+ gu}  .
	\end{eqnarray*}
	
	Thus    $b+  gu= 0\pmod2$. So $b=gu$.
	
 \noindent\ding{224} Let us apply the norm $N_{\KK/k_5}=1+\tau_1\tau_2$, with $k_5=\QQ(\sqrt{q}, \sqrt{2p})$. By the table \eqref{norm q=7 p=1mod4q} and Table \ref{tab3 case: 7=1 mod 8 and ()=1}, we  have:	
\begin{eqnarray*}
	N_{\KK/k_5}(\xi^2)&=&(-1)^a \cdot (-1)^b \cdot(-1)^c\cdot \varepsilon_{  q}^c\cdot(-1)^d \cdot \varepsilon_{2pq}^f\cdot (-1)^g \cdot\varepsilon_{2p}^g\\
	&=&	\varepsilon_{2pq}^f (-1)^{a+b+ c+d+g  }\varepsilon_{  q}^c \varepsilon_{2p}^g  .
\end{eqnarray*}
Thus $c=g$ and so $a+b +d     =0\pmod 2$.   Therefore, 
$$\xi^2=\varepsilon_{2}^a\varepsilon_{p}^b \sqrt{\varepsilon_{q}}^c\sqrt{\varepsilon_{2q}}^d\sqrt{\varepsilon_{pq}\varepsilon_{2pq}}^f
\sqrt{\varepsilon_{2p}}^c.$$
	
	\noindent\ding{224} Let us apply the norm $N_{\KK/k_6}=1+\tau_1\tau_3$, with $k_6=\QQ(\sqrt{p}, \sqrt{2q})$. By the table \eqref{norm q=7 p=1mod4q} and Table \ref{tab3 case: 7=1 mod 8 and ()=1}, we  have:
\begin{eqnarray*}
	N_{\KK/k_6}(\xi^2)&=&(-1)^a\cdot \varepsilon_{p}^{2b} \cdot (-1)^c  \cdot (-1)^d\cdot \varepsilon_{2  q}^d\cdot  \varepsilon_{2pq}^f\cdot (-1)^{cu+c}\\
	&=&	   \varepsilon_{p}^{2b} (-1)^{a+d+cu}\varepsilon_{2  q}^d\varepsilon_{2pq}^f.
\end{eqnarray*}	
Thus  $a+d+cu=0\pmod 2$ and $d=f$. Therefore,
$$\xi^2=\varepsilon_{2}^a\varepsilon_{p}^b \sqrt{\varepsilon_{q}}^c\sqrt{\varepsilon_{2q}}^d\sqrt{\varepsilon_{pq}\varepsilon_{2pq}}^d
\sqrt{\varepsilon_{2p}}^c.$$
	
		\noindent\ding{224} Let us apply the norm $N_{\KK/k_4}=1+\tau_1$, with $k_4=\QQ(\sqrt{p}, \sqrt{q})$.  By the table \eqref{norm q=7 p=1mod4q} and Table \ref{tab3 case: 7=1 mod 8 and ()=1}, we  have:		
	\begin{eqnarray*}
		N_{\KK/k_4}(\xi^2)&=&(-1)^a\cdot \varepsilon_{p}^{2b}\cdot (-1)^c\cdot \varepsilon_{q}^{c} \cdot (-1)^d\cdot \varepsilon_{  pq}^d\cdot (-1)^{cu+c}\\
		&=& \varepsilon_{p}^{2b} (-1)^{a+d+cu}\varepsilon_{q}^{c}\varepsilon_{  pq}^d.
	\end{eqnarray*}
	Thus   $d=c$ and so  $a+c+cu=0\pmod{2}$. It follows that 
 
$$\xi^2=\varepsilon_{2}^a\varepsilon_{p}^{cu} \sqrt{\varepsilon_{q}}^c\sqrt{\varepsilon_{2q}}^c\sqrt{\varepsilon_{pq}\varepsilon_{2pq}}^c
\sqrt{\varepsilon_{2p}}^c,$$	
where 	$a+cu+c=0\pmod 2$.

		\end{enumerate}

\end{proof}

 \begin{exam}Using  PARI/GP calculator version  2.15.5 (64bit), Feb 11 2024, we  get the following examples which illustrate the above theorem.
	\begin{enumerate}[\rm $1)$]
		\item  Consider the prime numbers $p=41$ and  $q=223$. In this case, the   conditions of the first item of the above theorem are verified and we have:
		$$E_{\KK}=\langle -1,   \varepsilon_{2}, \varepsilon_{p} ,    \sqrt{\varepsilon_{q}}, \sqrt{\varepsilon_{2q}},\sqrt{ \varepsilon_{pq}} ,\sqrt{ \varepsilon_{2pq}}, \sqrt{\varepsilon_{2}\varepsilon_{p}\varepsilon_{2p}} \rangle.$$
		
		Furthermore, we have $h_2(2p)=4$, $h_2(pq)=8$,  $h_2(2pq)=4$ and  $h_2(\KK)=8$.
		\item Consider the prime numbers $p=257$ and  $q=79$. In this case, the   conditions of the second item of the above theorem are verified and we have:
			$$E_{\KK}=\langle -1,   \varepsilon_{2}, \varepsilon_{p},   \sqrt{\varepsilon_{q}}, \sqrt{\varepsilon_{2q}},  \sqrt{\varepsilon_{pq}} ,\sqrt{ \varepsilon_{2pq}},    \sqrt{    \varepsilon_{2}^{a}\varepsilon_{p}^{u} \sqrt{\varepsilon_{q}}\sqrt{\varepsilon_{2q}}\sqrt{\varepsilon_{pq}\varepsilon_{2pq}}
			\sqrt{\varepsilon_{2p}}  }  \rangle$$
		for some 	$a\in\{0,1\}$ such that $a \equiv 1+u\pmod2$.	Furthermore, we have $h_2(2p)=4$, $h_2(pq)=4$,  $h_2(2pq)=4$ and  $h_2(\KK)=8$.
	\end{enumerate}
\end{exam}

\begin{theorem}\label{T_7_1-C6} Let $p\equiv 1\pmod{8}$ and $q\equiv7\pmod 8$ be two primes such that     $\genfrac(){}{0}{p}{q} =1$. 
	Put     $\KK=\QQ(\sqrt 2, \sqrt{p}, \sqrt{q} )$.  Assume furthermore that $p(x+1)$ and $2p(v+1)$ are squares in $\mathbb{N}$, where $x$ and $v$ are defined in Lemma \ref{lm expressions of units under cond 3 case: 7=1 mod 8 and ()=1}.
	\begin{enumerate}[\rm 1)]
		\item Assume that $N(\varepsilon_{2p})=-1$.  We have
		\begin{enumerate}[\rm $\bullet$]
			
			\item The unit group of $\KK$ is :
			$$E_{\KK}=\langle -1,   \varepsilon_{2}, \varepsilon_{p} ,    \sqrt{\varepsilon_{q}},  \sqrt{ \varepsilon_{pq}} ,\sqrt{ \varepsilon_{2pq}}, \sqrt{\varepsilon_{2}\varepsilon_{p}\varepsilon_{2p}}, \sqrt{  \sqrt{\varepsilon_{2q}}^{1+\gamma}\sqrt{\varepsilon_{pq}\varepsilon_{2pq}}^\alpha    } \rangle,$$
			where $\alpha$, $\gamma\in \{0,1\}$ such that $\alpha\not=\gamma$ and $\alpha =1$ if and only if $\sqrt{\varepsilon_{2q}}\sqrt{\varepsilon_{pq}\varepsilon_{2pq}}$ is a square in $\KK$.
			
			\item  The $2$-class number  of  $\KK$ equals $ \frac{1}{2^{4-\alpha}} h_2(2p)h_2(pq)h_2(2pq)$.  
		\end{enumerate}
		\item Assume that $N(\varepsilon_{2p})=1$.   We have
		
		\begin{enumerate}[\rm $\bullet$]
			\item  The unit group of $\KK$ is :
			$$E_{\KK}=\langle -1,   \varepsilon_{2}, \varepsilon_{p},   \sqrt{\varepsilon_{q}},   \sqrt{\varepsilon_{pq}} ,\sqrt{ \varepsilon_{2pq}},  \sqrt{\varepsilon_{2p}},  \sqrt{  \sqrt{\varepsilon_{2q}}^{1+\gamma}\sqrt{\varepsilon_{pq}\varepsilon_{2pq}}^\alpha    }  \rangle$$
			where $\alpha$, $\gamma\in \{0,1\}$ such that $\alpha\not=\gamma$ and $\alpha =1$ if and only if $\sqrt{\varepsilon_{2q}}\sqrt{\varepsilon_{pq}\varepsilon_{2pq}}$ is a square in $\KK$.
			\item  The $2$-class number  of  $\KK$ equals $ \frac{1}{2^{4-\alpha}} h_2(2p)h_2(pq)h_2(2pq)$.
		\end{enumerate}
	\end{enumerate}
\end{theorem}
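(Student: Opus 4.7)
The plan mirrors that of Theorems~\ref{T_7_-1}, \ref{T_7_1_C2}, \ref{T_7_1-C3}, \ref{T_7_1-C4} and~\ref{T_7_1-C5}, adapted to the present subcase. I would start by fixing fundamental systems of units for the three biquadratic subfields. Lemma~\ref{units of k1} determines the one of $k_1$ according to $N(\varepsilon_{2p})$, and Lemma~\ref{lm expressions of units q_2q} gives $\{\varepsilon_2,\sqrt{\varepsilon_q},\sqrt{\varepsilon_{2q}}\}$ for $k_2$. For $k_3=\QQ(\sqrt 2,\sqrt{pq})$, Lemma~\ref{lm expressions of units under cond 3 case: 7=1 mod 8 and ()=1} (with $p(x+1)$ and $2p(v+1)$ squares) forces $\sqrt{\varepsilon_{pq}}\in\QQ(\sqrt p,\sqrt q)$ and $\sqrt{2\varepsilon_{2pq}}\in\QQ(\sqrt p,\sqrt{2q})$, so neither sits in $k_3$; but a short verification with Wada's algorithm (cf.\ page~\pageref{algo wada}) shows that $\sqrt{\varepsilon_{pq}\varepsilon_{2pq}}$ does lie in $k_3$, giving the FSU $\{\varepsilon_2,\varepsilon_{pq},\sqrt{\varepsilon_{pq}\varepsilon_{2pq}}\}$, exactly as in Theorem~\ref{T_7_1-C5}.

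Next, for an arbitrary element $\xi\in\KK$ with
\[
\xi^2=\varepsilon_2^{a}\varepsilon_p^{b}\sqrt{\varepsilon_q}^{c}\sqrt{\varepsilon_{2q}}^{d}\sqrt{\varepsilon_{pq}\varepsilon_{2pq}}^{f}\eta^{g},\quad a,b,c,d,f,g\in\{0,1\},
\]
with $\eta=\sqrt{\varepsilon_2\varepsilon_p\varepsilon_{2p}}$ or $\sqrt{\varepsilon_{2p}}$ according to the sign of $N(\varepsilon_{2p})$, I would successively apply the norms $N_{\KK/k_2}=1+\tau_2$, $N_{\KK/k_5}=1+\tau_1\tau_2$, $N_{\KK/k_6}=1+\tau_1\tau_3$ and $N_{\KK/k_4}=1+\tau_1$, reading off the signs from~\eqref{norm q=7 p=1mod4q} (together with~\eqref{T_3_-1_eqi2p_N=1} when $N(\varepsilon_{2p})=1$) and from Table~\ref{tab3 case: 7=1 mod 8 and ()=1}, taking row~$2$ for $\sqrt{\varepsilon_{2pq}}$ (since $p(x+1)$ is a square) and row~$3$ for $\sqrt{\varepsilon_{pq}}$ (since $2p(v+1)$ is). Solving the resulting linear congruences modulo~$2$ should collapse $\xi^2$ onto the single candidate $\sqrt{\varepsilon_{2q}}\sqrt{\varepsilon_{pq}\varepsilon_{2pq}}$, multiplied by $\sqrt{\varepsilon_{2p}}$ in the case $N(\varepsilon_{2p})=1$; the expected parities are identical in both items, which is why the same dichotomy $\alpha\not=\gamma$ appears in the statement.

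To decide the parameter $\alpha$, i.e.\ whether that candidate really is a square in $\KK$, I would compare two expressions for $h_2(\KK)$: on one hand Wada's formula~\eqref{classnumberofKK case: 7=1 mod 8 and ()=1} in terms of $q(\KK)$ and $h_2(2p)h_2(pq)h_2(2pq)$, and on the other the value obtained by descending through an unramified quadratic subextension of $\KK$ (as in the proof of Theorem~\ref{T_7_-1}), using the appropriate biquadratic $k_i$ whose $2$-class group is easily computed by Lemmas~\ref{wada's f.} and~\ref{class numbers of quadratic field}. Matching the two expressions pins down $q(\KK)$, and hence $\alpha\in\{0,1\}$; the index formula $h_2(\KK)=\tfrac{1}{2^{4-\alpha}}h_2(2p)h_2(pq)h_2(2pq)$ then follows. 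I expect the main obstacle to lie precisely in this final consistency step: with signs coming from rows~$2$ and~$3$ of Table~\ref{tab3 case: 7=1 mod 8 and ()=1}, several elements of $E_{k_1}E_{k_2}E_{k_3}$ become squares in a biquadratic subfield (e.g.\ $k_4$, $k_5$ or $k_6$) before becoming squares in $\KK$ itself, and the bookkeeping has to be done carefully to avoid adjoining a spurious square root that actually lives only in a proper subfield.
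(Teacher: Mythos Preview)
Your plan matches the paper's: same fundamental systems for $k_1,k_2,k_3$, same sequence of norm maps $1+\tau_2$, $1+\tau_1\tau_2$, $1+\tau_1\tau_3$, $1+\tau_1$, and the same use of~\eqref{norm q=7 p=1mod4q}, \eqref{T_3_-1_eqi2p_N=1} and Table~\ref{tab3 case: 7=1 mod 8 and ()=1}. Two corrections are in order.

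First, in part~2) the norm computations force $g=0$: one finds $c=g$ from $N_{\KK/k_5}$ and then $c=0$ from $N_{\KK/k_4}$, so $\sqrt{\varepsilon_{2p}}$ does \emph{not} survive in the candidate. The surviving element is exactly $\sqrt{\varepsilon_{2q}}^{\,d}\sqrt{\varepsilon_{pq}\varepsilon_{2pq}}^{\,d}$, identical to part~1); $\sqrt{\varepsilon_{2p}}$ enters $E_{\KK}$ only as an independent generator inherited from $k_1$.

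Second, and more importantly, the paper does \emph{not} determine $\alpha$. The statement is conditional: $\alpha=1$ if and only if $\sqrt{\varepsilon_{2q}}\sqrt{\varepsilon_{pq}\varepsilon_{2pq}}$ happens to be a square in $\KK$, and the class-number formula $h_2(\KK)=\tfrac{1}{2^{4-\alpha}}h_2(2p)h_2(pq)h_2(2pq)$ is then read off directly from Wada's formula~\eqref{classnumberofKK case: 7=1 mod 8 and ()=1} once $q(\KK)$ is expressed as a function of $\alpha$. The independent-computation trick you invoke from Theorem~\ref{T_7_-1} worked there only because $\genfrac(){}{0}{p}{q}=-1$ forces $h_2(pq)=h_2(2pq)=2$ exactly (Lemma~\ref{class numbers of quadratic field}); in the present case $\genfrac(){}{0}{p}{q}=1$ gives only $4\mid h_2(pq),h_2(2pq)$, so a second expression for $h_2(\KK)$ would still involve unknown quantities and cannot resolve $\alpha$. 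Drop that step and the proof goes through.
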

\begin{proof} 
 	\begin{enumerate}[\rm 1)]
		\item 		Assume that   $N(\varepsilon_{2p})=-1$.  By Lemma \ref{units of k1},     $\{\varepsilon_{2}, \varepsilon_{p},	\sqrt{\varepsilon_{2}\varepsilon_{p}\varepsilon_{2p}}\}$  is a  fundamental system of units of $k_1$. Using Lemmas \ref{lm expressions of units q_2q} and \ref{lm expressions of units under cond 3 case: 7=1 mod 8 and ()=1}, we check   that
		$\{\varepsilon_{2}, \sqrt{\varepsilon_{q}}, \sqrt{\varepsilon_{2q}}\}$ and $\{ \varepsilon_{2}, 	{\varepsilon_{ pq}},\sqrt{\varepsilon_{pq}\varepsilon_{2pq}}\}$ are respectively fundamental systems of units of $k_2$ and $k_3$.
		It follows that,  	$$E_{k_1}E_{k_2}E_{k_3}=\langle-1,  \varepsilon_{2}, \varepsilon_{p},   \sqrt{\varepsilon_{q}}, \sqrt{\varepsilon_{2q}},   {\varepsilon_{ pq}} ,\sqrt{ \varepsilon_{pq}\varepsilon_{2pq}}, \sqrt{\varepsilon_{2}\varepsilon_{p}\varepsilon_{2p}}\rangle.$$	
		Thus we shall determine elements of $E_{k_1}E_{k_2}E_{k_3}$ which are squares in $\KK$. 
		Notice that by Lemma \ref{lm expressions of units under cond 3 case: 7=1 mod 8 and ()=1},
		$\varepsilon_{pq}$ is a square in  $\KK$.
		Let  $\xi$ is an element of $\KK$ which is the  square root of an element of $E_{k_1}E_{k_2}E_{k_3}$. We can assume that
		$$\xi^2=\varepsilon_{2}^a\varepsilon_{p}^b \sqrt{\varepsilon_{q}}^c\sqrt{\varepsilon_{2q}}^d\sqrt{\varepsilon_{pq}\varepsilon_{2pq}}^f 
		\sqrt{\varepsilon_{2}\varepsilon_{p}\varepsilon_{2p}}^g,$$
		where $a, b, c, d,   f$ and $g$ are in $\{0, 1\}$.

		\noindent\ding{224}  Let us start	by applying   the norm map $N_{\KK/k_2}=1+\tau_2$. We have 
		$\sqrt{\varepsilon_{2}\varepsilon_{p}\varepsilon_{2p}}^{1+ \tau_2}=(-1)^v\varepsilon_{2}$, for some $v\in\{0,1\}$. By means of 
		\eqref{norm q=7 p=1mod4q} and Table \ref{tab3 case: 7=1 mod 8 and ()=1},   we get:
		\begin{eqnarray*}
			N_{\KK/k_2}(\xi^2)&=&
			\varepsilon_{2}^{2a}\cdot(-1)^b \cdot \varepsilon_{q}^c\cdot\varepsilon_{2q}^d\cdot1 \cdot (-1)^{gv}\varepsilon_{2}^{g}\\
			&=&	\varepsilon_{2}^{2a}  \varepsilon_{q}^c\varepsilon_{2q}^d\cdot(-1)^{b+  gv} \varepsilon_{2}^g.
		\end{eqnarray*}
		
		Thus    $b+  gv= 0\pmod2$ and $g=0$. Therefore,  $b= 0$ and
		$$\xi^2=\varepsilon_{2}^a  \sqrt{\varepsilon_{q}}^c\sqrt{\varepsilon_{2q}}^d\sqrt{\varepsilon_{pq}\varepsilon_{2pq}}^f 
		.$$

		\noindent\ding{224} Let us apply the norm $N_{\KK/k_5}=1+\tau_1\tau_2$, with $k_5=\QQ(\sqrt{q}, \sqrt{2p})$. By the table \eqref{norm q=7 p=1mod4q} and Table \ref{tab3 case: 7=1 mod 8 and ()=1}, we  have:	
		\begin{eqnarray*}
			N_{\KK/k_5}(\xi^2)&=&(-1)^a  \cdot(-1)^c\cdot \varepsilon_{  q}^c\cdot(-1)^d \cdot(-1)^f \cdot\varepsilon_{2pq}^f  \\
			&=&	\varepsilon_{2pq}^f (-1)^{a + c+d +f }\varepsilon_{  q}^c   .
		\end{eqnarray*}
	By Lemma \ref{lm expressions of units under cond 3 case: 7=1 mod 8 and ()=1} (resp. Lemma \ref{lm expressions of units q_2q}), $\varepsilon_{2pq}$ is (resp. ${\varepsilon_{  q}}$ is not)   a square in $k_5$, then $c=0$ and so $a  +d+f =0\pmod 2$.   Therefore, 
		$$\xi^2=\varepsilon_{2}^a   \sqrt{\varepsilon_{2q}}^d\sqrt{\varepsilon_{pq}\varepsilon_{2pq}}^f 
		.$$
		
		\noindent\ding{224} Let us apply the norm $N_{\KK/k_6}=1+\tau_1\tau_3$, with $k_6=\QQ(\sqrt{p}, \sqrt{2q})$. By the table \eqref{norm q=7 p=1mod4q} and Table \ref{tab3 case: 7=1 mod 8 and ()=1}, we  have:
	\begin{eqnarray*}
		N_{\KK/k_6}(\xi^2)&=&(-1)^a  \cdot (-1)^d\cdot \varepsilon_{2  q}^d\cdot  (-1)^f \cdot \varepsilon_{2pq}^f\\
		&=&	    (-1)^{a+d+f}\varepsilon_{2  q}^d\varepsilon_{2pq}^f.
	\end{eqnarray*}	
	Thus  $d =f$ and $a=0$.	Therefore,
$$\xi^2=    \sqrt{\varepsilon_{2q}}^d\sqrt{\varepsilon_{pq}\varepsilon_{2pq}}^f .$$
		
	\noindent\ding{224} Let us apply the norm $N_{\KK/k_4}=1+\tau_1$, with $k_4=\QQ(\sqrt{p}, \sqrt{q})$.  By the table \eqref{norm q=7 p=1mod4q} and Table \ref{tab3 case: 7=1 mod 8 and ()=1}, we  have:		
	\begin{eqnarray*}
		N_{\KK/k_4}(\xi^2)&=&(-1)^d\cdot (-1)^f\cdot \varepsilon_{  pq}^f\\
		&=&\varepsilon_{  pq}^f(-1)^{d+f}.
	\end{eqnarray*}
	Thus   $d=f$.	Therefore,
	 	$$\xi^2=    \sqrt{\varepsilon_{2q}}^d\sqrt{\varepsilon_{pq}\varepsilon_{2pq}}^d .$$
		\item  
		Assume that   $N(\varepsilon_{2p})=1$. So we have:
		$$E_{k_1}E_{k_2}E_{k_3}=\langle-1,  \varepsilon_{2}, \varepsilon_{p},   \sqrt{\varepsilon_{q}}, \sqrt{\varepsilon_{2q}},   {\varepsilon_{ pq}} ,\sqrt{ \varepsilon_{pq}\varepsilon_{2pq}}, \sqrt{\varepsilon_{2p}}\rangle.$$	
		To determine the elements of $E_{k_1}E_{k_2}E_{k_3}$ which are squares in $\KK$, let us consider $\xi$   an element of $\KK$ which is the  square root of an element of $E_{k_1}E_{k_2}E_{k_3}$. As $\varepsilon_{pq}$ is a square in  $\KK$, we can assume that
		$$\xi^2=\varepsilon_{2}^a\varepsilon_{p}^b \sqrt{\varepsilon_{q}}^c\sqrt{\varepsilon_{2q}}^d\sqrt{\varepsilon_{pq}\varepsilon_{2pq}}^f
		\sqrt{\varepsilon_{2p}}^g,$$
		where $a, b, c, d,   f$ and $g$ are in $\{0, 1\}$.
		
		\noindent\ding{224}  Let us start	by applying   the norm map $N_{\KK/k_2}=1+\tau_2$.  By \eqref{T_3_-1_eqi2p_N=1}, 
		\eqref{norm q=7 p=1mod4q} and Table \ref{tab3 case: 7=1 mod 8 and ()=1},   we have:
		\begin{eqnarray*}
			N_{\KK/k_2}(\xi^2)&=&
			\varepsilon_{2}^{2a}\cdot(-1)^b \cdot \varepsilon_{q}^c\cdot\varepsilon_{2q}^d\cdot1 \cdot (-1)^{gu} \\
			&=&	\varepsilon_{2}^{2a}  \varepsilon_{q}^c\varepsilon_{2q}^d\cdot(-1)^{b+ gu}  .
		\end{eqnarray*}
		
		Thus    $b+  gu= 0\pmod2$. So $b=gu$.
		
	\noindent\ding{224} Let us apply the norm $N_{\KK/k_5}=1+\tau_1\tau_2$, with $k_5=\QQ(\sqrt{q}, \sqrt{2p})$. By the table \eqref{norm q=7 p=1mod4q} and Table \ref{tab3 case: 7=1 mod 8 and ()=1}, we  have:	
	\begin{eqnarray*}
		N_{\KK/k_5}(\xi^2)&=&(-1)^a \cdot (-1)^b \cdot(-1)^c\cdot \varepsilon_{  q}^c\cdot(-1)^d\cdot(-1)^f \cdot \varepsilon_{2pq}^f\cdot (-1)^g \cdot\varepsilon_{2p}^g\\
		&=&	\varepsilon_{2pq}^f (-1)^{a+b+ c+d+f+g  }\varepsilon_{  q}^c \varepsilon_{2p}^g  .
	\end{eqnarray*}
	Thus $c=g$ and so $a+b+  d+f      =0\pmod 2$.   Therefore, 
		$$\xi^2=\varepsilon_{2}^a\varepsilon_{p}^b \sqrt{\varepsilon_{q}}^c\sqrt{\varepsilon_{2q}}^d\sqrt{\varepsilon_{pq}\varepsilon_{2pq}}^f
	\sqrt{\varepsilon_{2p}}^c.$$
		
	\noindent\ding{224} Let us apply the norm $N_{\KK/k_6}=1+\tau_1\tau_3$, with $k_6=\QQ(\sqrt{p}, \sqrt{2q})$. By the table  \eqref{norm q=7 p=1mod4q} and Table \ref{tab3 case: 7=1 mod 8 and ()=1}, we  have:
	\begin{eqnarray*}
		N_{\KK/k_6}(\xi^2)&=&(-1)^a\cdot \varepsilon_{p}^{2b} \cdot (-1)^c  \cdot (-1)^d\cdot \varepsilon_{2  q}^d \cdot (-1)^f\cdot  \varepsilon_{2pq}^f\cdot (-1)^{cu+c}\\
		&=&	   \varepsilon_{p}^{2b} (-1)^{a+d+f+cu}\varepsilon_{2  q}^d\varepsilon_{2pq}^f.
	\end{eqnarray*}	
	Thus $d=f$ and so $a  +cu=0\pmod 2$. As $a+b+  d+f      =0\pmod 2$, then $a=b=cu$.
	Therefore,
	$$\xi^2=\varepsilon_{2}^{cu}\varepsilon_{p}^{cu} \sqrt{\varepsilon_{q}}^c\sqrt{\varepsilon_{2q}}^d\sqrt{\varepsilon_{pq}\varepsilon_{2pq}}^d
	\sqrt{\varepsilon_{2p}}^c.$$	
\noindent\ding{224} Let us apply the norm $N_{\KK/k_4}=1+\tau_1$, with $k_4=\QQ(\sqrt{p}, \sqrt{q})$.  By the table \eqref{norm q=7 p=1mod4q} and Table \ref{tab3 case: 7=1 mod 8 and ()=1}, we  have:		
\begin{eqnarray*}
	N_{\KK/k_4}(\xi^2)&=&(-1)^{cu}\cdot \varepsilon_{p}^{2cu}\cdot (-1)^c\cdot \varepsilon_{q}^{c} \cdot (-1)^d\cdot (-1)^d\cdot \varepsilon_{  pq}^d\cdot (-1)^{cu+c}\\
	&=& \varepsilon_{p}^{2cu}\varepsilon_{  pq}^d\cdot \varepsilon_{q}^{c}.
\end{eqnarray*}
Thus   $c=0$. It follows that 
$$\xi^2= \sqrt{\varepsilon_{2q}}^d\sqrt{\varepsilon_{pq}\varepsilon_{2pq}}^d,$$	
\end{enumerate} 
\end{proof} 
	
	 \begin{exam}Using  PARI/GP calculator version  2.15.5 (64bit), Feb 11 2024, we  get the following examples which illustrate the above theorem.
		\begin{enumerate}[\rm $1)$]
			\item  Consider the prime numbers $p=457$ and  $q=463$. In this case, the   conditions of the first item of the above theorem are verified and we have:
			$$E_{\KK}=\langle -1,   \varepsilon_{2}, \varepsilon_{p} ,    \sqrt{\varepsilon_{q}},  \sqrt{ \varepsilon_{pq}} ,\sqrt{ \varepsilon_{2pq}}, \sqrt{\varepsilon_{2}\varepsilon_{p}\varepsilon_{2p}}, \sqrt{  \sqrt{\varepsilon_{2q}}\sqrt{\varepsilon_{pq}\varepsilon_{2pq}}    } \rangle.$$
			
			Furthermore, we have $h_2(2p)=4$, $h_2(pq)=8$,  $h_2(2pq)=4$ and  $h_2(\KK)=8$.
			\item Consider the prime numbers $p=17$ and  $q=103$. In this case, the   conditions of the second item of the above theorem are verified and we have:
			$$E_{\KK}=\langle -1,   \varepsilon_{2}, \varepsilon_{p},   \sqrt{\varepsilon_{q}},   \sqrt{\varepsilon_{pq}} ,\sqrt{ \varepsilon_{2pq}},  \sqrt{\varepsilon_{2p}},  \sqrt{  \sqrt{\varepsilon_{2q}}\sqrt{\varepsilon_{pq}\varepsilon_{2pq}}    }  \rangle.$$
			 	Furthermore, we have $h_2(2p)=2$, $h_2(pq)=4$,  $h_2(2pq)=4$ and  $h_2(\KK)=4$.
		\end{enumerate}
	\end{exam}

	\begin{theorem}\label{T_7_1-C7} Let $p\equiv 1\pmod{8}$ and $q\equiv7\pmod 8$ be two primes such that     $\genfrac(){}{0}{p}{q} =1$. 
		Put     $\KK=\QQ(\sqrt 2, \sqrt{p}, \sqrt{q} )$.  Assume furthermore that $2p(x+1)$ and $v+1$ are squares in $\mathbb{N}$, where $x$ and $v$ are defined in Lemma \ref{lm expressions of units under cond 3 case: 7=1 mod 8 and ()=1}.
		\begin{enumerate}[\rm 1)]
			\item Assume that $N(\varepsilon_{2p})=-1$.  We have
			\begin{enumerate}[\rm $\bullet$]
				
				\item The unit group of $\KK$ is :
				$$E_{\KK}=\langle -1,   \varepsilon_{2}, \varepsilon_{p} ,    \sqrt{\varepsilon_{q}},  \sqrt{\varepsilon_{2q}},\sqrt{ \varepsilon_{pq}} ,\sqrt{ \varepsilon_{2pq}}, \sqrt{\varepsilon_{2}\varepsilon_{p}\varepsilon_{2p}}  \rangle,$$

				\item  The $2$-class number  of  $\KK$ equals $ \frac{1}{2^{4}} h_2(2p)h_2(pq)h_2(2pq)$.  
			\end{enumerate}
			\item Assume that $N(\varepsilon_{2p})=1$ and let $a\in\{0,1\}$ such that $a \equiv 1+u\pmod2$.   We have
			
			\begin{enumerate}[\rm $\bullet$]
				\item  The unit group of $\KK$ is :
					$$E_{\KK}=\langle -1,   \varepsilon_{2}, \varepsilon_{p},   \sqrt{\varepsilon_{q}}, \sqrt{\varepsilon_{2q}},  \sqrt{\varepsilon_{pq}} ,\sqrt{ \varepsilon_{2pq}},    \sqrt{    \varepsilon_{2}^{a\alpha}\varepsilon_{p}^{u\alpha} \sqrt{\varepsilon_{q}}^\alpha\sqrt{\varepsilon_{pq}}^\alpha
					\sqrt{\varepsilon_{2p}}^{1+\gamma}  }  \rangle$$
				where $\alpha$, $\gamma\in \{0,1\}$ such that $\alpha\not=\gamma$ and $\alpha =1$ if and only if $\varepsilon_{2}^{a}\varepsilon_{p}^{u} \sqrt{\varepsilon_{q}}\sqrt{\varepsilon_{pq}}
				\sqrt{\varepsilon_{2p}}$ is a square in $\KK$.
				\item  The $2$-class number  of  $\KK$ equals $ \frac{1}{2^{4-\alpha}} h_2(2p)h_2(pq)h_2(2pq)$.
			\end{enumerate}
		\end{enumerate}
	\end{theorem}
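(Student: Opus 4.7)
My plan is to mirror the strategy of Theorems \ref{T_7_-1}--\ref{T_7_1-C6}. First, combining Lemma \ref{units of k1} with Lemmas \ref{lm expressions of units q_2q} and \ref{lm expressions of units under cond 3 case: 7=1 mod 8 and ()=1}, I will write down fundamental systems of units for $k_1$, $k_2$ and $k_3$. Since $v+1$ is a square, $\sqrt{\varepsilon_{pq}}$ lies in $k_3\subset\KK$; and since $2p(x+1)$ is a square, $\sqrt{\varepsilon_{2pq}}$ lies in $k_5=\QQ(\sqrt{2p},\sqrt q)\subset\KK$, so in particular $\varepsilon_{2pq}$ is already a square in $\KK$. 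Consequently, a fundamental system of $k_3$ is $\{\varepsilon_2,\varepsilon_{2pq},\sqrt{\varepsilon_{pq}}\}$, and the relevant generating set for $E_{k_1}E_{k_2}E_{k_3}$ (enlarged by $\sqrt{\varepsilon_{2pq}}$) is $\{-1,\varepsilon_2,\varepsilon_p,\sqrt{\varepsilon_q},\sqrt{\varepsilon_{2q}},\sqrt{\varepsilon_{pq}},\sqrt{\varepsilon_{2pq}},\eta\}$, where $\eta=\sqrt{\varepsilon_2\varepsilon_p\varepsilon_{2p}}$ in case 1 and $\eta=\sqrt{\varepsilon_{2p}}$ in case 2.

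Next I would carry out the Wada reduction. Writing a candidate square root $\xi\in\KK$ of
\[
\xi^2=\varepsilon_2^a\varepsilon_p^b\sqrt{\varepsilon_q}^c\sqrt{\varepsilon_{2q}}^d\sqrt{\varepsilon_{pq}}^e\sqrt{\varepsilon_{2pq}}^f\eta^g,
\]
with exponents in $\{0,1\}$, I apply the four norm maps $N_{\KK/k_2}=1+\tau_2$, $N_{\KK/k_5}=1+\tau_1\tau_2$, $N_{\KK/k_6}=1+\tau_1\tau_3$, $N_{\KK/k_4}=1+\tau_1$, reading the effect on each generator from \eqref{norm q=7 p=1mod4q}, Table \ref{tab3 case: 7=1 mod 8 and ()=1} and (in case 2) \eqref{T_3_-1_eqi2p_N=1}. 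Each norm forces a congruence modulo $2$ on the exponents, and together the four relations pin down which combinations can be squares in $\KK$.

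In case 1, the combined relations should force every exponent to vanish, so no further square root is contributed; hence $q(\KK)=2^5$ and Lemma \ref{wada's f.} together with Lemma \ref{class numbers of quadratic field} yields $h_2(\KK)=\tfrac{1}{2^4}h_2(2p)h_2(pq)h_2(2pq)$. In case 2, the norm computations should give $d=f$, $c=e$, $b\equiv ug\pmod 2$, and an additional relation tying $a$ to $u$ via $a\equiv u+1\pmod 2$, leaving freedom parametrized by a single bit: whether $\varepsilon_2^a\varepsilon_p^u\sqrt{\varepsilon_q}\sqrt{\varepsilon_{pq}}\sqrt{\varepsilon_{2p}}$ is itself a square in $\KK$. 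The parameter $\alpha$ records this, and $\gamma=1-\alpha$ ensures that $\sqrt{\varepsilon_{2p}}$ is among the generators exactly when the larger quantity is not a square, explaining the form of the announced generator $\sqrt{\varepsilon_2^{a\alpha}\varepsilon_p^{u\alpha}\sqrt{\varepsilon_q}^\alpha\sqrt{\varepsilon_{pq}}^\alpha\sqrt{\varepsilon_{2p}}^{1+\gamma}}$.

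The main obstacle will be the last step: verifying that both $\alpha=0$ and $\alpha=1$ actually occur and checking that they correspond to the two admissible values of $q(\KK)$. As in the analogous arguments of Theorems \ref{T_7_1_C2} and \ref{T_7_1-C6}, I would resolve this by comparing $q(\KK)\in\{2^5,2^6\}$ with the general Wada formula \eqref{classnumberofKK case: 7=1 mod 8 and ()=1}; the two possibilities correspond exactly to the announced factors $\tfrac{1}{2^{4-\alpha}}$, and each is consistent with Lemma \ref{wada's f.}. Substituting the value of $q(\KK)$ so obtained back into that lemma gives the stated formula $h_2(\KK)=\tfrac{1}{2^{4-\alpha}}h_2(2p)h_2(pq)h_2(2pq)$.
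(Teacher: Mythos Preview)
Your plan is essentially the paper's own argument: identify the fundamental systems of $k_1,k_2,k_3$, observe that $\varepsilon_{2pq}$ is already a square in $\KK$, and then reduce a candidate $\xi^2$ via the norms $1+\tau_2$, $1+\tau_1\tau_2$, $1+\tau_1\tau_3$, $1+\tau_1$ using Table~\ref{tab3 case: 7=1 mod 8 and ()=1} and \eqref{T_3_-1_eqi2p_N=1}. Two small corrections are worth making before you execute it.

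First, $\sqrt{\varepsilon_{2pq}}$ does \emph{not} lie in $k_5=\QQ(\sqrt{2p},\sqrt q)$: one has $\sqrt{2\varepsilon_{2pq}}=y_1\sqrt{2p}+y_2\sqrt q\in k_5$, but $\sqrt 2\notin k_5$, so $\sqrt{\varepsilon_{2pq}}=y_1\sqrt p+\tfrac{y_2}{2}\sqrt{2q}\in k_6=\QQ(\sqrt p,\sqrt{2q})\subset\KK$. This does not affect the conclusion that $\varepsilon_{2pq}$ is a square in $\KK$, but it matters when you read off which factors of $N_{\KK/k_6}(\xi^2)$ are already squares.

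Second, in the Wada reduction one writes $\xi^2$ as a product of generators of $E_{k_1}E_{k_2}E_{k_3}$; the generator coming from $k_3$ is $\varepsilon_{2pq}$, not $\sqrt{\varepsilon_{2pq}}$. The paper therefore drops $\varepsilon_{2pq}$ from $\xi^2$ (it is already a known square) rather than ``enlarging'' the test element by $\sqrt{\varepsilon_{2pq}}^{\,f}$. With that correction the relations you obtain in Case~2 are $b=gu$, $c=g$, $d=0$, $c=e$, and $a+c+cu\equiv0\pmod 2$ (so $a\equiv u+1$ only when $c=1$), matching the announced generator; your anticipated ``$d=f$'' is an artefact of the extra parameter. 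Apart from these points your outline coincides with the paper's proof.
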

	\begin{proof} 
		\begin{enumerate}[\rm 1)]
			\item 		Assume that   $N(\varepsilon_{2p})=-1$.  By Lemma \ref{units of k1},     $\{\varepsilon_{2}, \varepsilon_{p},	\sqrt{\varepsilon_{2}\varepsilon_{p}\varepsilon_{2p}}\}$  is a  fundamental system of units of $k_1$. Using Lemmas \ref{lm expressions of units q_2q} and \ref{lm expressions of units under cond 3 case: 7=1 mod 8 and ()=1}, we check   that
			$\{\varepsilon_{2}, \sqrt{\varepsilon_{q}}, \sqrt{\varepsilon_{2q}}\}$ and $\{ \varepsilon_{2}, 	{\varepsilon_{ 2pq}},\sqrt{\varepsilon_{pq}}\}$ are respectively fundamental systems of units of $k_2$ and $k_3$.
			It follows that,  	$$E_{k_1}E_{k_2}E_{k_3}=\langle-1,  \varepsilon_{2}, \varepsilon_{p},   \sqrt{\varepsilon_{q}}, \sqrt{\varepsilon_{2q}},   {\varepsilon_{ 2pq}} ,\sqrt{ \varepsilon_{pq}}, \sqrt{\varepsilon_{2}\varepsilon_{p}\varepsilon_{2p}}\rangle.$$	
			Thus we shall determine elements of $E_{k_1}E_{k_2}E_{k_3}$ which are squares in $\KK$. 
			Notice that by Lemma \ref{lm expressions of units under cond 3 case: 7=1 mod 8 and ()=1},
			$\varepsilon_{2pq}$ is a square in  $\KK$.
			Let  $\xi$ is an element of $\KK$ which is the  square root of an element of $E_{k_1}E_{k_2}E_{k_3}$. We can assume that
			$$\xi^2=\varepsilon_{2}^a\varepsilon_{p}^b \sqrt{\varepsilon_{q}}^c\sqrt{\varepsilon_{2q}}^d\sqrt{\varepsilon_{pq}}^f 
			\sqrt{\varepsilon_{2}\varepsilon_{p}\varepsilon_{2p}}^g,$$
			where $a, b, c, d,   f$ and $g$ are in $\{0, 1\}$.

			\noindent\ding{224}  Let us start	by applying   the norm map $N_{\KK/k_2}=1+\tau_2$. We have 
			$\sqrt{\varepsilon_{2}\varepsilon_{p}\varepsilon_{2p}}^{1+ \tau_2}=(-1)^v\varepsilon_{2}$, for some $v\in\{0,1\}$. By means of 
			\eqref{norm q=7 p=1mod4q} and Table \ref{tab3 case: 7=1 mod 8 and ()=1},   we get:
			\begin{eqnarray*}
				N_{\KK/k_2}(\xi^2)&=&
				\varepsilon_{2}^{2a}\cdot(-1)^b \cdot \varepsilon_{q}^c\cdot\varepsilon_{2q}^d\cdot1 \cdot (-1)^{gv}\varepsilon_{2}^{g}\\
				&=&	\varepsilon_{2}^{2a}  \varepsilon_{q}^c\varepsilon_{2q}^d\cdot(-1)^{b+  gv} \varepsilon_{2}^g.
			\end{eqnarray*}
	Thus    $b+  gv= 0\pmod2$ and $g=0$. So $b=0$ and 
	$$\xi^2=\varepsilon_{2}^a  \sqrt{\varepsilon_{q}}^c\sqrt{\varepsilon_{2q}}^d\sqrt{\varepsilon_{pq}}^f .$$
	\noindent\ding{224} Let us apply the norm $N_{\KK/k_5}=1+\tau_1\tau_2$, with $k_5=\QQ(\sqrt{q}, \sqrt{2p})$. By the table \eqref{norm q=7 p=1mod4q} and Table \ref{tab3 case: 7=1 mod 8 and ()=1}, we  have:	
	\begin{eqnarray*}
		N_{\KK/k_5}(\xi^2)&=&(-1)^a  \cdot(-1)^c\cdot \varepsilon_{  q}^c\cdot(-1)^d \cdot(-1)^f   \\
		&=&	 (-1)^{a + c+d +f }\varepsilon_{  q}^c   .
	\end{eqnarray*}
	Thus $c=0$ and so $a  +d+f =0\pmod 2$.   Therefore, 
  $$\xi^2=\varepsilon_{2}^a   \sqrt{\varepsilon_{2q}}^d\sqrt{\varepsilon_{pq}}^f .$$

\noindent\ding{224} Let us apply the norm $N_{\KK/k_6}=1+\tau_1\tau_3$, with $k_6=\QQ(\sqrt{p}, \sqrt{2q})$. By the table \eqref{norm q=7 p=1mod4q} and Table \ref{tab3 case: 7=1 mod 8 and ()=1}, we  have:
\begin{eqnarray*}
	N_{\KK/k_6}(\xi^2)&=&(-1)^a  \cdot (-1)^d\cdot \varepsilon_{2  q}^d\cdot  (-1)^f  \\
	&=&	    (-1)^{a+d+f}\varepsilon_{2  q}^d .
\end{eqnarray*}	
Thus  $d =0$ and $a=f$.	Therefore,
$$\xi^2=\varepsilon_{2}^a    \sqrt{\varepsilon_{pq}}^a .$$

\noindent\ding{224} Let us apply the norm $N_{\KK/k_4}=1+\tau_1$, with $k_4=\QQ(\sqrt{p}, \sqrt{q})$.  By the table \eqref{norm q=7 p=1mod4q} and Table \ref{tab3 case: 7=1 mod 8 and ()=1}, we  have:		
\begin{eqnarray*}
	N_{\KK/k_4}(\xi^2)&=&(-1)^a\cdot (-1)^a\cdot \varepsilon_{  pq}^a=\varepsilon_{  pq}^a.
\end{eqnarray*}
Thus   $a=0$. So we have the first item.

\item 	Assume that   $N(\varepsilon_{2p})=1$. So we have:
$$E_{k_1}E_{k_2}E_{k_3}=\langle-1,  \varepsilon_{2}, \varepsilon_{p},   \sqrt{\varepsilon_{q}}, \sqrt{\varepsilon_{2q}},   {\varepsilon_{ 2pq}} ,\sqrt{ \varepsilon_{pq}}, \sqrt{\varepsilon_{2p}}\rangle.$$		
To determine the elements of $E_{k_1}E_{k_2}E_{k_3}$ which are squares in $\KK$, let us consider $\xi$   an element of $\KK$ which is the  square root of an element of $E_{k_1}E_{k_2}E_{k_3}$. As $\varepsilon_{2pq}$ is a square in  $\KK$, we can assume that
$$\xi^2=\varepsilon_{2}^a\varepsilon_{p}^b \sqrt{\varepsilon_{q}}^c\sqrt{\varepsilon_{2q}}^d\sqrt{\varepsilon_{pq}}^f 
\sqrt{\varepsilon_{2p}}^g,$$
where $a, b, c, d,   f$ and $g$ are in $\{0, 1\}$.

\noindent\ding{224}  Let us start	by applying   the norm map $N_{\KK/k_2}=1+\tau_2$.  By \eqref{T_3_-1_eqi2p_N=1}, 
\eqref{norm q=7 p=1mod4q} and Table \ref{tab3 case: 7=1 mod 8 and ()=1},   we have:
\begin{eqnarray*}
	N_{\KK/k_2}(\xi^2)&=&
	\varepsilon_{2}^{2a}\cdot(-1)^b \cdot \varepsilon_{q}^c\cdot\varepsilon_{2q}^d\cdot1 \cdot (-1)^{gu} \\
	&=&	\varepsilon_{2}^{2a}  \varepsilon_{q}^c\varepsilon_{2q}^d\cdot(-1)^{b+ gu}  .
\end{eqnarray*}

Thus    $b+  gu= 0\pmod2$. So $b=gu$.

	\noindent\ding{224} Let us apply the norm $N_{\KK/k_5}=1+\tau_1\tau_2$, with $k_5=\QQ(\sqrt{q}, \sqrt{2p})$. By the table \eqref{norm q=7 p=1mod4q} and Table \ref{tab3 case: 7=1 mod 8 and ()=1}, we  have:	
\begin{eqnarray*}
	N_{\KK/k_5}(\xi^2)&=&(-1)^a \cdot (-1)^b \cdot(-1)^c\cdot \varepsilon_{  q}^c\cdot(-1)^d\cdot(-1)^f \cdot (-1)^g \cdot\varepsilon_{2p}^g\\
	&=&	 (-1)^{a+b+ c+d+f+g  }\varepsilon_{  q}^c \varepsilon_{2p}^g  .
\end{eqnarray*}
Thus $c=g$ and so $a+b+  d+f      =0\pmod 2$.   Therefore, 
$$\xi^2=\varepsilon_{2}^a\varepsilon_{p}^b \sqrt{\varepsilon_{q}}^c\sqrt{\varepsilon_{2q}}^d\sqrt{\varepsilon_{pq}}^f
\sqrt{\varepsilon_{2p}}^c.$$
\noindent\ding{224} Let us apply the norm $N_{\KK/k_6}=1+\tau_1\tau_3$, with $k_6=\QQ(\sqrt{p}, \sqrt{2q})$. By the table \eqref{norm q=7 p=1mod4q} and Table \ref{tab3 case: 7=1 mod 8 and ()=1}, we  have:
\begin{eqnarray*}
	N_{\KK/k_6}(\xi^2)&=&(-1)^a\cdot \varepsilon_{p}^{2b} \cdot (-1)^c  \cdot (-1)^d\cdot \varepsilon_{2  q}^d \cdot (-1)^f\cdot  (-1)^{cu+c}\\
	&=&	   \varepsilon_{p}^{2b} (-1)^{a+d+f+cu}\varepsilon_{2  q}^d.
\end{eqnarray*}	
Thus $d=0$ and so $a+f+cu=0\pmod 2$. As $a+b+  d+f      =0\pmod 2$, then $a+b+  f      =0\pmod 2$.
Therefore,
$$\xi^2=\varepsilon_{2}^a\varepsilon_{p}^b \sqrt{\varepsilon_{q}}^c \sqrt{\varepsilon_{pq}}^f
\sqrt{\varepsilon_{2p}}^c.$$	
	\noindent\ding{224} Let us apply the norm $N_{\KK/k_4}=1+\tau_1$, with $k_4=\QQ(\sqrt{p}, \sqrt{q})$.  By the table \eqref{norm q=7 p=1mod4q} and Table \ref{tab3 case: 7=1 mod 8 and ()=1}, we  have:		
\begin{eqnarray*}
	N_{\KK/k_4}(\xi^2)&=&(-1)^{a}\cdot \varepsilon_{p}^{2b}\cdot (-1)^c\cdot \varepsilon_{q}^{c} \cdot (-1)^f\cdot \varepsilon_{  pq}^f\cdot (-1)^{cu+c}\\
	&=& \varepsilon_{p}^{2b}(-1)^{a+f+cu}\varepsilon_{q}^{c}\varepsilon_{  pq}^f .
\end{eqnarray*}
Thus   $c=f$ and  $a+c+cu=0\pmod 2$. Therefore, 
$$\xi^2=\varepsilon_{2}^a\varepsilon_{p}^{cu} \sqrt{\varepsilon_{q}}^c \sqrt{\varepsilon_{pq}}^c
\sqrt{\varepsilon_{2p}}^c.$$

	\end{enumerate}	
		\end{proof}

	\begin{exam}Using  PARI/GP calculator version  2.15.5 (64bit), Feb 11 2024, we  get the following examples which illustrate the above theorem.
		\begin{enumerate}[\rm $1)$]
			\item  Consider the prime numbers $p=41$ and  $q=103$. In this case, the   conditions of the first item of the above theorem are verified and we have:
				$$E_{\KK}=\langle -1,   \varepsilon_{2}, \varepsilon_{p} ,    \sqrt{\varepsilon_{q}},  \sqrt{\varepsilon_{2q}},\sqrt{ \varepsilon_{pq}} ,\sqrt{ \varepsilon_{2pq}}, \sqrt{\varepsilon_{2}\varepsilon_{p}\varepsilon_{2p}}  \rangle.$$
			
			Furthermore, we have $h_2(2p)=4$, $h_2(pq)=4$,  $h_2(2pq)=4$ and  $h_2(\KK)=4$.
			\item Consider the prime numbers $p=17$ and  $q=359$. In this case, the   conditions of the second item of the above theorem are verified and we have:
			$$E_{\KK}=\langle -1,   \varepsilon_{2}, \varepsilon_{p},   \sqrt{\varepsilon_{q}}, \sqrt{\varepsilon_{2q}},  \sqrt{\varepsilon_{pq}} ,\sqrt{ \varepsilon_{2pq}},    \sqrt{    \varepsilon_{2}^{a}\varepsilon_{p}^{u} \sqrt{\varepsilon_{q}}\sqrt{\varepsilon_{pq}}
				\sqrt{\varepsilon_{2p}}  }  \rangle,$$
			for some 	$a\in\{0,1\}$ such that $a \equiv 1+u\pmod2$.	Furthermore, we have $h_2(2p)=2$, $h_2(pq)=4$,  $h_2(2pq)=8$ and  $h_2(\KK)=8$.
		\end{enumerate}
	\end{exam}

\begin{theorem}\label{T_7_1-C8} Let $p\equiv 1\pmod{8}$ and $q\equiv7\pmod 8$ be two primes such that     $\genfrac(){}{0}{p}{q} =1$. 
	Put     $\KK=\QQ(\sqrt 2, \sqrt{p}, \sqrt{q} )$.  Assume furthermore that $2p(x+1)$ and $p(v+1)$ are squares in $\mathbb{N}$, where $x$ and $v$ are defined in Lemma \ref{lm expressions of units under cond 3 case: 7=1 mod 8 and ()=1}.
	\begin{enumerate}[\rm 1)]
		\item Assume that $N(\varepsilon_{2p})=-1$.  We have
		\begin{enumerate}[\rm $\bullet$]
			
			\item The unit group of $\KK$ is :
			$$E_{\KK}=\langle -1,   \varepsilon_{2}, \varepsilon_{p} ,      \sqrt{\varepsilon_{2q}},\sqrt{ \varepsilon_{pq}} ,\sqrt{ \varepsilon_{2pq}}, \sqrt{\varepsilon_{2}\varepsilon_{p}\varepsilon_{2p}} ,\sqrt{   \sqrt{\varepsilon_{q}}^{1+\gamma}  \sqrt{\varepsilon_{pq}\varepsilon_{ 2pq}}^{\alpha}  } \rangle,$$
			where $\alpha$, $\gamma\in \{0,1\}$ such that $\alpha\not=\gamma$ and $\alpha =1$ if and only if $\sqrt{\varepsilon_{q}}   \sqrt{\varepsilon_{pq}\varepsilon_{ 2pq}} $ is a square in $\KK$.
			
			\item  The $2$-class number  of  $\KK$ equals $ \frac{1}{2^{4-\alpha}} h_2(2p)h_2(pq)h_2(2pq)$.  
		\end{enumerate}
		\item Assume that $N(\varepsilon_{2p})=1$.   We have
		
		\begin{enumerate}[\rm $\bullet$]
			\item  The unit group of $\KK$ is :
			$$E_{\KK}=\langle -1,   \varepsilon_{2}, \varepsilon_{p},   \sqrt{\varepsilon_{2q}},  \sqrt{\varepsilon_{pq}} ,\sqrt{ \varepsilon_{2pq}}, \sqrt{\varepsilon_{2p}},    \sqrt{     \sqrt{\varepsilon_{q}}^{1+\gamma} \sqrt{\varepsilon_{pq}\varepsilon_{2pq}}^\alpha   }  \rangle$$
			where $\alpha$, $\gamma\in \{0,1\}$ such that $\alpha\not=\gamma$ and $\alpha =1$ if and only if $ \sqrt{\varepsilon_{q}}  \sqrt{\varepsilon_{pq}\varepsilon_{2pq}}  $ is a square in $\KK$.
			\item  The $2$-class number  of  $\KK$ equals $ \frac{1}{2^{4-\alpha}} h_2(2p)h_2(pq)h_2(2pq)$.
		\end{enumerate}
	\end{enumerate}
\end{theorem}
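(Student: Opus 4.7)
The plan is to mimic the step-by-step template established by the proofs of Theorems \ref{T_7_1-C6} and \ref{T_7_1-C7}, which handle closely related combinations of the hypotheses in Lemma \ref{lm expressions of units under cond 3 case: 7=1 mod 8 and ()=1}. First I would identify fundamental systems of units of the three biquadratic subfields $k_1,k_2,k_3$. By Lemma \ref{units of k1}, $E_{k_1}$ is given by $\{\varepsilon_2,\varepsilon_p,\sqrt{\varepsilon_2\varepsilon_p\varepsilon_{2p}}\}$ or $\{\varepsilon_2,\varepsilon_p,\sqrt{\varepsilon_{2p}}\}$ depending on $N(\varepsilon_{2p})$; Lemma \ref{lm expressions of units q_2q} yields $\{\varepsilon_2,\sqrt{\varepsilon_q},\sqrt{\varepsilon_{2q}}\}$ for $k_2$; and under the hypothesis that $2p(x+1)$ and $p(v+1)$ are squares, Lemma \ref{lm expressions of units under cond 3 case: 7=1 mod 8 and ()=1} forces $\sqrt{\varepsilon_{pq}}$ and $\sqrt{\varepsilon_{2pq}}$ to lie in $k_3$, with $\sqrt{\varepsilon_{pq}\varepsilon_{2pq}}$ playing the role of the third independent unit. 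This gives $E_{k_1}E_{k_2}E_{k_3}$ as a $\langle -1\rangle$-module generated by seven explicit units.

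Next, I would write a candidate square root $\xi\in\KK$ as
\[
\xi^2=\varepsilon_2^{a}\varepsilon_p^{b}\sqrt{\varepsilon_q}^{c}\sqrt{\varepsilon_{2q}}^{d}\sqrt{\varepsilon_{pq}}^{e}\sqrt{\varepsilon_{pq}\varepsilon_{2pq}}^{f}\cdot\omega^{g},
\]
where $\omega$ is $\sqrt{\varepsilon_2\varepsilon_p\varepsilon_{2p}}$ or $\sqrt{\varepsilon_{2p}}$ according to the case, and all exponents are in $\{0,1\}$. I would then apply the five relative norms $N_{\KK/k_2}=1+\tau_2$, $N_{\KK/k_5}=1+\tau_1\tau_2$, $N_{\KK/k_6}=1+\tau_1\tau_3$, $N_{\KK/k_3}=1+\tau_2\tau_3$ and $N_{\KK/k_4}=1+\tau_1$ successively, reading off the sign and $\varepsilon_\ast$ contributions from \eqref{T_3_-1_eqi2p_N=1}, \eqref{norm q=7 p=1mod4q} and from the rows of Table \ref{tab3 case: 7=1 mod 8 and ()=1} corresponding to the hypotheses $2p(x+1)$ and $p(v+1)$. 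Each norm computation must both kill the square of a unit and leave an expression whose sign in $\{\pm 1\}$ gives a congruence on $(a,b,c,d,e,f,g)$; solving the resulting linear system mod $2$ should reduce the candidate to a single nontrivial shape proportional to $\sqrt{\varepsilon_q}\sqrt{\varepsilon_{pq}\varepsilon_{2pq}}$ times an explicit prefactor in $\varepsilon_2,\varepsilon_p,\sqrt{\varepsilon_{2p}}$ dictated by the integer $u$ of Lemma \ref{lm noms esp_2p}.

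Finally, to decide whether this surviving candidate is actually a square in $\KK$, I would invoke Lemma \ref{wada's f.} together with the independent computation of $h_2(\KK)$ via the intermediate field $k_5=\QQ(\sqrt{2p},\sqrt q)$, exactly as in \eqref{class nbr of K+}. Comparing the formula
\[
h_2(\KK)=\frac{1}{2^9}\,q(\KK)\,h_2(2)h_2(p)h_2(q)h_2(2p)h_2(2q)h_2(pq)h_2(2pq)
\]
with the value forced by the unramified quadratic extension $\KK/k_5$ and the value of $h_2(k_5)$ computed through Lemma \ref{class numbers of quadratic field} gives $q(\KK)$. This index distinguishes the two possibilities $\alpha=0$ and $\alpha=1$, i.e.\ whether or not $\sqrt{\varepsilon_q}\sqrt{\varepsilon_{pq}\varepsilon_{2pq}}$ is a square in $\KK$, and yields the stated class number $\tfrac{1}{2^{4-\alpha}}h_2(2p)h_2(pq)h_2(2pq)$.

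The main obstacle I expect is bookkeeping rather than conceptual: the $\tau_i$-action on $\sqrt{\varepsilon_{pq}\varepsilon_{2pq}}$ must be derived by combining the two rows of Table \ref{tab3 case: 7=1 mod 8 and ()=1} (for $2p(x+1)$ and $p(v+1)$), and errors in signs or in the parity of $u$ propagate through every norm relation. The most delicate point will be the $N(\varepsilon_{2p})=1$ branch, where the extra unit $\sqrt{\varepsilon_{2p}}$ couples the exponent $g$ to $a,b$ through the integer $u$, so one must carefully verify that the two congruence conditions $a+c+cu\equiv 0\pmod{2}$ and the relation linking $g$ to the surviving $c$-exponent are compatible with exactly one of the alternatives $\alpha=0$ or $\alpha=1$, thereby ruling out the trivial solution and producing the nontrivial generator in the stated form.
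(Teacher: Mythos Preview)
Your plan follows the paper's template closely, and the norm-map eliminations you outline are exactly what the paper does.  Two points, however, need correction.

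First, your description of $E_{k_3}$ is off.  Under the hypotheses $2p(x+1)$ and $p(v+1)$ square, Lemma \ref{lm expressions of units under cond 3 case: 7=1 mod 8 and ()=1} gives $\sqrt{2\varepsilon_{pq}}=w_1\sqrt p+w_2\sqrt q$ and $\sqrt{2\varepsilon_{2pq}}=y_1\sqrt{2p}+y_2\sqrt q$; hence $\sqrt{\varepsilon_{pq}}$ and $\sqrt{\varepsilon_{2pq}}$ lie in $\KK$ but \emph{not} in $k_3=\QQ(\sqrt 2,\sqrt{pq})$.  What lies in $k_3$ is the product $\sqrt{\varepsilon_{pq}\varepsilon_{2pq}}$, so the fundamental system of $k_3$ is $\{\varepsilon_2,\varepsilon_{pq},\sqrt{\varepsilon_{pq}\varepsilon_{2pq}}\}$, and one then notes separately that $\varepsilon_{pq}$ is already a square in $\KK$.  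Your candidate $\xi^2$ therefore should not carry an independent factor $\sqrt{\varepsilon_{pq}}^{\,e}$; the paper drops $\varepsilon_{pq}$ (as it is a square) and works with only six exponents.

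Second, and more importantly, the final step you describe cannot be carried out here and is in fact unnecessary.  The comparison \eqref{class nbr of K+} relied on $\genfrac(){}{0}{p}{q}=-1$, which pinned down $h_2(pq)=h_2(2pq)=2$ and made $\KK/k_5$ unramified with a computable $h_2(k_5)$.  Under $\genfrac(){}{0}{p}{q}=1$ those class numbers are only known to be divisible by $4$ (Lemma \ref{class numbers of quadratic field}), so no independent value of $h_2(\KK)$ is available to force a value of $\alpha$.  The paper does not attempt this: after the norm eliminations reduce $\xi^2$ to $\sqrt{\varepsilon_q}^{\,c}\sqrt{\varepsilon_{pq}\varepsilon_{2pq}}^{\,c}$ (in \emph{both} cases $N(\varepsilon_{2p})=\pm 1$; the integer $u$ drops out because the norms force $g=0$ in case~2), it simply records that $\alpha\in\{0,1\}$ is determined by whether this element is a square in $\KK$, and reads off $h_2(\KK)$ from Lemma \ref{wada's f.} via $q(\KK)=2^{5+\alpha}$.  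So your expected coupling of $u$ to the final generator in case~2, and your plan to decide $\alpha$ by class-number comparison, should both be dropped.
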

\begin{proof} 
	\begin{enumerate}[\rm 1)]
		\item 		Assume that   $N(\varepsilon_{2p})=-1$.  By Lemma \ref{units of k1},     $\{\varepsilon_{2}, \varepsilon_{p},	\sqrt{\varepsilon_{2}\varepsilon_{p}\varepsilon_{2p}}\}$  is a  fundamental system of units of $k_1$. Using Lemmas \ref{lm expressions of units q_2q} and \ref{lm expressions of units under cond 3 case: 7=1 mod 8 and ()=1}, we check   that
		$\{\varepsilon_{2}, \sqrt{\varepsilon_{q}}, \sqrt{\varepsilon_{2q}}\}$ and $\{ \varepsilon_{2}, 	{\varepsilon_{ pq}},\sqrt{\varepsilon_{pq}\varepsilon_{ 2pq}}\}$ are respectively fundamental systems of units of $k_2$ and $k_3$.
		It follows that,  	$$E_{k_1}E_{k_2}E_{k_3}=\langle-1,  \varepsilon_{2}, \varepsilon_{p},   \sqrt{\varepsilon_{q}}, \sqrt{\varepsilon_{2q}},   {\varepsilon_{ pq}} ,\sqrt{ \varepsilon_{pq}\varepsilon_{ 2pq}}, \sqrt{\varepsilon_{2}\varepsilon_{p}\varepsilon_{2p}}\rangle.$$	
		Thus we shall determine elements of $E_{k_1}E_{k_2}E_{k_3}$ which are squares in $\KK$. 
		Notice that by Lemma \ref{lm expressions of units under cond 3 case: 7=1 mod 8 and ()=1},
		$\varepsilon_{pq}$ is a square in  $\KK$.
		Let  $\xi$ is an element of $\KK$ which is the  square root of an element of $E_{k_1}E_{k_2}E_{k_3}$. We can assume that
		$$\xi^2=\varepsilon_{2}^a\varepsilon_{p}^b \sqrt{\varepsilon_{q}}^c\sqrt{\varepsilon_{2q}}^d\sqrt{\varepsilon_{pq}\varepsilon_{ 2pq}}^f 
		\sqrt{\varepsilon_{2}\varepsilon_{p}\varepsilon_{2p}}^g,$$
		where $a, b, c, d,   f$ and $g$ are in $\{0, 1\}$.

		\noindent\ding{224}  Let us start	by applying   the norm map $N_{\KK/k_2}=1+\tau_2$. We have 
		$\sqrt{\varepsilon_{2}\varepsilon_{p}\varepsilon_{2p}}^{1+ \tau_2}=(-1)^v\varepsilon_{2}$, for some $v\in\{0,1\}$. By means of 
		\eqref{norm q=7 p=1mod4q} and Table \ref{tab3 case: 7=1 mod 8 and ()=1},   we get:
		\begin{eqnarray*}
			N_{\KK/k_2}(\xi^2)&=&
			\varepsilon_{2}^{2a}\cdot(-1)^b \cdot \varepsilon_{q}^c\cdot\varepsilon_{2q}^d\cdot1 \cdot (-1)^{gv}\varepsilon_{2}^{g}\\
			&=&	\varepsilon_{2}^{2a}  \varepsilon_{q}^c\varepsilon_{2q}^d\cdot(-1)^{b+  gv} \varepsilon_{2}^g.
		\end{eqnarray*}
		Thus    $b+  gv= 0\pmod2$ and $g=0$. Therefore, $b=0$ and 
	$$\xi^2=\varepsilon_{2}^a \sqrt{\varepsilon_{q}}^c\sqrt{\varepsilon_{2q}}^d\sqrt{\varepsilon_{pq}\varepsilon_{ 2pq}}^f.$$
	
	\noindent\ding{224} Let us apply the norm $N_{\KK/k_5}=1+\tau_1\tau_2$, with $k_5=\QQ(\sqrt{q}, \sqrt{2p})$. By the table \eqref{norm q=7 p=1mod4q} and Table \ref{tab3 case: 7=1 mod 8 and ()=1}, we  have:	
	\begin{eqnarray*}
		N_{\KK/k_5}(\xi^2)&=&(-1)^a  \cdot(-1)^c\cdot \varepsilon_{  q}^c\cdot(-1)^d \cdot(-1)^f\cdot \varepsilon_{ 2pq}^f   \\
		&=&	 (-1)^{a + c+d +f }\varepsilon_{  q}^c \varepsilon_{ 2pq}^f  .
	\end{eqnarray*}
	Thus $c=f$ and   $a + c+d +f =0\pmod 2$.   Therefore, $a  =d$ and
	$$\xi^2=\varepsilon_{2}^a \sqrt{\varepsilon_{q}}^c\sqrt{\varepsilon_{2q}}^a\sqrt{\varepsilon_{pq}\varepsilon_{ 2pq}}^c.$$

\noindent\ding{224} Let us apply the norm $N_{\KK/k_6}=1+\tau_1\tau_3$, with $k_6=\QQ(\sqrt{p}, \sqrt{2q})$. By the table \eqref{norm q=7 p=1mod4q} and Table \ref{tab3 case: 7=1 mod 8 and ()=1}, we  have:
\begin{eqnarray*}
	N_{\KK/k_6}(\xi^2)&=&(-1)^a  \cdot(-1)^c  \cdot (-1)^a\cdot \varepsilon_{2  q}^a\cdot  (-1)^c\cdot \varepsilon_{ 2pq}^c \\
	&=&	   \varepsilon_{ 2pq}^c  \cdot\varepsilon_{2  q}^a .
\end{eqnarray*}	
Thus  $a =0$.	Therefore,	
		$$\xi^2=  \sqrt{\varepsilon_{q}}^c \sqrt{\varepsilon_{pq}\varepsilon_{ 2pq}}^c.$$

\noindent\ding{224} Let us apply the norm $N_{\KK/k_4}=1+\tau_1$, with $k_4=\QQ(\sqrt{p}, \sqrt{q})$.  By the table \eqref{norm q=7 p=1mod4q} and Table \ref{tab3 case: 7=1 mod 8 and ()=1}, we  have:		
\begin{eqnarray*}
	N_{\KK/k_4}(\xi^2)&=&(-1)^c\cdot  \varepsilon_{  q}^c\cdot (-1)^c\cdot \varepsilon_{  pq}^c=\varepsilon_{  q}^c   \varepsilon_{  pq}^c.
\end{eqnarray*}
We deduce nothing.

	\item Assume that   $N(\varepsilon_{2p})=1$. So we have:
$$E_{k_1}E_{k_2}E_{k_3}=\langle-1,  \varepsilon_{2}, \varepsilon_{p},   \sqrt{\varepsilon_{q}}, \sqrt{\varepsilon_{2q}},   {\varepsilon_{ pq}} ,\sqrt{ \varepsilon_{pq}\varepsilon_{ 2pq}}, \sqrt{\varepsilon_{2p}}\rangle.$$		
	To determine the elements of $E_{k_1}E_{k_2}E_{k_3}$ which are squares in $\KK$, let us consider $\xi$   an element of $\KK$ which is the  square root of an element of $E_{k_1}E_{k_2}E_{k_3}$. As $\varepsilon_{pq}$ is a square in  $\KK$, we can assume that
	$$\xi^2=\varepsilon_{2}^a\varepsilon_{p}^b \sqrt{\varepsilon_{q}}^c\sqrt{\varepsilon_{2q}}^d\sqrt{\varepsilon_{pq}\varepsilon_{ 2pq}}^f 
	\sqrt{ \varepsilon_{2p}}^g,$$
	where $a, b, c, d,   f$ and $g$ are in $\{0, 1\}$.
	
	\noindent\ding{224}  Let us start	by applying   the norm map $N_{\KK/k_2}=1+\tau_2$.  By \eqref{T_3_-1_eqi2p_N=1}, 
	\eqref{norm q=7 p=1mod4q} and Table \ref{tab3 case: 7=1 mod 8 and ()=1},   we have:
	\begin{eqnarray*}
		N_{\KK/k_2}(\xi^2)&=&
		\varepsilon_{2}^{2a}\cdot(-1)^b \cdot \varepsilon_{q}^c\cdot\varepsilon_{2q}^d\cdot1 \cdot (-1)^{gu} \\
		&=&	\varepsilon_{2}^{2a}  \varepsilon_{q}^c\varepsilon_{2q}^d\cdot(-1)^{b+ gu}  .
	\end{eqnarray*}
	
	Thus    $b+  gu= 0\pmod2$. So $b=gu$.
	
	\noindent\ding{224} Let us apply the norm $N_{\KK/k_5}=1+\tau_1\tau_2$, with $k_5=\QQ(\sqrt{q}, \sqrt{2p})$. By the table \eqref{norm q=7 p=1mod4q} and Table \ref{tab3 case: 7=1 mod 8 and ()=1}, we  have:	
\begin{eqnarray*}
	N_{\KK/k_5}(\xi^2)&=&(-1)^a \cdot (-1)^b \cdot(-1)^c\cdot \varepsilon_{  q}^c\cdot(-1)^d\cdot(-1)^f \cdot \varepsilon_{2pq}^f \cdot(-1)^g \cdot\varepsilon_{2p}^g\\
	&=&	 (-1)^{a+b+ c+d+f+g  }\varepsilon_{  q}^c\varepsilon_{2pq}^f \varepsilon_{2p}^g  .
\end{eqnarray*}
Thus   $a+b+ c+d+f+g      =0\pmod 2$ and $c+f+  g      =0\pmod 2$.   Therefore, $a+b+  d        =0\pmod 2$.
 
 \noindent\ding{224} Let us apply the norm $N_{\KK/k_6}=1+\tau_1\tau_3$, with $k_6=\QQ(\sqrt{p}, \sqrt{2q})$. By the table \eqref{norm q=7 p=1mod4q} and Table \ref{tab3 case: 7=1 mod 8 and ()=1}, we  have:
 \begin{eqnarray*}
 	N_{\KK/k_6}(\xi^2)&=&(-1)^a\cdot \varepsilon_{p}^{2b} \cdot (-1)^c  \cdot (-1)^d\cdot \varepsilon_{2  q}^d \cdot (-1)^f\cdot \varepsilon_{2pq}^f\cdot (-1)^{gu+g}\\
 	&=&	   \varepsilon_{p}^{2b}\varepsilon_{2pq}^f (-1)^{a+c+d+f+gu+g}\varepsilon_{2  q}^d.
 \end{eqnarray*}	
 Thus $d=0$ and so $a+c +f+gu+g=0\pmod 2$. 
 As $a+b+  d        =0\pmod 2$, then $a=b$.  
 Therefore,
 $$\xi^2=\varepsilon_{2}^a\varepsilon_{p}^a \sqrt{\varepsilon_{q}}^c\sqrt{\varepsilon_{pq}\varepsilon_{ 2pq}}^f \sqrt{ \varepsilon_{2p}}^g.$$
 
\noindent\ding{224} Let us apply the norm $N_{\KK/k_4}=1+\tau_1$, with $k_4=\QQ(\sqrt{p}, \sqrt{q})$.  By the table \eqref{norm q=7 p=1mod4q} and Table \ref{tab3 case: 7=1 mod 8 and ()=1}, we  have:		
\begin{eqnarray*}
	N_{\KK/k_4}(\xi^2)&=&(-1)^{a}\cdot \varepsilon_{p}^{2a}\cdot (-1)^c\cdot \varepsilon_{q}^{c} \cdot (-1)^f\cdot \varepsilon_{  pq}^f\cdot (-1)^{gu+g}\\
	&=& \varepsilon_{p}^{2b}(-1)^{a+c+f+gu+g}\varepsilon_{q}^{c}\varepsilon_{  pq}^f .
\end{eqnarray*}
Thus      $a+c+f+gu+g=0\pmod 2$ and $c=f$. So $a+ gu+g=0\pmod 2$. As  $ c+f +g=0\pmod 2$, then $g=0$ and so $a=0$. Therefore, 
 $$\xi^2=  \sqrt{\varepsilon_{q}}^c\sqrt{\varepsilon_{pq}\varepsilon_{ 2pq}}^c  .$$
 
 So the second item.

  	\end{enumerate}	
\end{proof}

		\begin{exam}Using  PARI/GP calculator version  2.15.5 (64bit), Feb 11 2024, we  get the following examples which illustrate the above theorem.
		\begin{enumerate}[\rm $1)$]
			\item  Consider the prime numbers $p=313$ and  $q=151$. In this case, the   conditions of the first item of the above theorem are verified and we have:
				$$E_{\KK}=\langle -1,   \varepsilon_{2}, \varepsilon_{p} ,      \sqrt{\varepsilon_{2q}},\sqrt{ \varepsilon_{pq}} ,\sqrt{ \varepsilon_{2pq}}, \sqrt{\varepsilon_{2}\varepsilon_{p}\varepsilon_{2p}} ,\sqrt{   \sqrt{\varepsilon_{q}}  \sqrt{\varepsilon_{pq}\varepsilon_{ 2pq}}  } \rangle.$$
		 	Furthermore, we have $h_2(2p)=4$, $h_2(pq)=4$,  $h_2(2pq)=8$ and  $h_2(\KK)=16$.
			\item  Consider the prime numbers $p=17$ and  $q=127$. In this case, the   conditions of the second item of the above theorem are verified and we have:
			$$E_{\KK}=\langle -1,   \varepsilon_{2}, \varepsilon_{p},   \sqrt{\varepsilon_{2q}},  \sqrt{\varepsilon_{pq}} ,\sqrt{ \varepsilon_{2pq}}, \sqrt{\varepsilon_{2p}},    \sqrt{     \sqrt{\varepsilon_{q}} \sqrt{\varepsilon_{pq}\varepsilon_{2pq}}   }  \rangle.$$
			 	Furthermore, we have $h_2(2p)=2$, $h_2(pq)=4$,  $h_2(2pq)=4$ and  $h_2(\KK)=4$.
		\end{enumerate}
	\end{exam}

	\begin{theorem}\label{T_7_1-C9} Let $p\equiv 1\pmod{8}$ and $q\equiv7\pmod 8$ be two primes such that     $\genfrac(){}{0}{p}{q} =1$. 
		Put     $\KK=\QQ(\sqrt 2, \sqrt{p}, \sqrt{q} )$.  Assume furthermore that $2p(x+1)$ and $2p(v+1)$ are squares in $\mathbb{N}$, where $x$ and $v$ are defined in Lemma \ref{lm expressions of units under cond 3 case: 7=1 mod 8 and ()=1}.
		\begin{enumerate}[\rm 1)]
			\item Assume that $N(\varepsilon_{2p})=-1$.  We have
			\begin{enumerate}[\rm $\bullet$]
				
				\item The unit group of $\KK$ is :
				$$E_{\KK}=\langle -1,   \varepsilon_{2}, \varepsilon_{p} ,     \sqrt{\varepsilon_{ q}}, \sqrt{\varepsilon_{2q}},\sqrt{ \varepsilon_{pq}} ,\sqrt{ \varepsilon_{2pq}}, \sqrt{\varepsilon_{2}\varepsilon_{p}\varepsilon_{2p}}   \rangle,$$
				
				
				\item  The $2$-class number  of  $\KK$ equals $ \frac{1}{2^{4}} h_2(2p)h_2(pq)h_2(2pq)$.  
			\end{enumerate}
			\item Assume that $N(\varepsilon_{2p})=1$ and let $a\in\{0,1\}$ such that $a \equiv 1+u\pmod2$.   We have
			
			\begin{enumerate}[\rm $\bullet$]
				\item  The unit group of $\KK$ is :
				$$E_{\KK}=\langle -1,   \varepsilon_{2}, \varepsilon_{p},  \sqrt{\varepsilon_{q}}, \sqrt{\varepsilon_{2q}},  \sqrt{\varepsilon_{pq}} ,\sqrt{ \varepsilon_{2pq}},      \sqrt{     \varepsilon_{2}^{a\alpha}\varepsilon_{p}^{ u\alpha}   \sqrt{\varepsilon_{pq}\varepsilon_{ 2pq}}^{\alpha} \sqrt{ \varepsilon_{2p}}^{1+\gamma}   }  \rangle$$
				where $\alpha$, $\gamma\in \{0,1\}$ such that $\alpha\not=\gamma$ and $\alpha =1$ if and only if $ \varepsilon_{2}^{a}\varepsilon_{p}^{ u}   \sqrt{\varepsilon_{pq}\varepsilon_{ 2pq}} \sqrt{ \varepsilon_{2p}}  $ is a square in $\KK$.
				\item  The $2$-class number  of  $\KK$ equals $ \frac{1}{2^{4-\alpha}} h_2(2p)h_2(pq)h_2(2pq)$.
			\end{enumerate}
		\end{enumerate}
	\end{theorem}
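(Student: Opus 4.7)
The plan is to mimic the structure of the proofs of Theorems \ref{T_7_1-C5}, \ref{T_7_1-C6}, and \ref{T_7_1-C8}, since the hypotheses that $2p(x+1)$ and $2p(v+1)$ are squares in $\NN$ place us in a case closely parallel to those. First I would nail down the fundamental systems of units of the three biquadratic subfields. By Lemma \ref{units of k1}, $k_1$ admits the fundamental system $\{\varepsilon_2,\varepsilon_p,\sqrt{\varepsilon_2\varepsilon_p\varepsilon_{2p}}\}$ if $N(\varepsilon_{2p})=-1$ and $\{\varepsilon_2,\varepsilon_p,\sqrt{\varepsilon_{2p}}\}$ if $N(\varepsilon_{2p})=1$. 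By Lemma \ref{lm expressions of units q_2q} item 2, $k_2$ has fundamental system $\{\varepsilon_2,\sqrt{\varepsilon_q},\sqrt{\varepsilon_{2q}}\}$. For $k_3=\QQ(\sqrt{2},\sqrt{pq})$, combining the hypothesis $2p(v+1)\in\NN^2$ (which by Lemma \ref{lm expressions of units under cond 3 case: 7=1 mod 8 and ()=1} item 2 c) shows $\sqrt{\varepsilon_{pq}}\notin k_3$ but $\varepsilon_{pq}$ is a square in $\KK$) with Wada's algorithm recalled on page \pageref{algo wada}, $k_3$ admits the fundamental system $\{\varepsilon_2,\varepsilon_{pq},\sqrt{\varepsilon_{pq}\varepsilon_{2pq}}\}$, exactly as in Theorems \ref{T_7_1-C5}, \ref{T_7_1-C6} and \ref{T_7_1-C8}.

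Next, since $\varepsilon_{pq}$ is already a square in $\KK$, I would write any candidate square root as
$$\xi^{2}=\varepsilon_{2}^{a}\varepsilon_{p}^{b}\sqrt{\varepsilon_{q}}^{c}\sqrt{\varepsilon_{2q}}^{d}\sqrt{\varepsilon_{pq}\varepsilon_{2pq}}^{f}\,\eta^{g},\qquad a,b,c,d,f,g\in\{0,1\},$$
with $\eta=\sqrt{\varepsilon_2\varepsilon_p\varepsilon_{2p}}$ in the first item and $\eta=\sqrt{\varepsilon_{2p}}$ in the second. Then I would apply successively the four relative norms $N_{\KK/k_2}=1+\tau_2$, $N_{\KK/k_5}=1+\tau_1\tau_2$, $N_{\KK/k_6}=1+\tau_1\tau_3$ and $N_{\KK/k_4}=1+\tau_1$, reading off each action from Table \ref{tab3 case: 7=1 mod 8 and ()=1} together with the table \eqref{norm q=7 p=1mod4q}, and, in the $N(\varepsilon_{2p})=1$ subcase, also \eqref{T_3_-1_eqi2p_N=1}. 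Each norm yields a linear parity constraint on $(a,b,c,d,f,g)$, and by the same elimination pattern seen in the preceding eight theorems, the first subcase should force $a=b=c=d=f=g=0$, leaving only the already-known square $\varepsilon_{pq}$; while the second subcase should reduce the nontrivial possibilities to $\xi^{2}=\varepsilon_{2}^{a}\varepsilon_{p}^{u}\sqrt{\varepsilon_{pq}\varepsilon_{2pq}}\sqrt{\varepsilon_{2p}}$ with $a\equiv u+1\pmod 2$, augmented by $\sqrt{\varepsilon_{2p}}$ alone.

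For the class number, I would invoke Lemma \ref{wada's f.} through the formula \eqref{classnumberofKK case: 7=1 mod 8 and ()=1}. Encoding by $\alpha\in\{0,1\}$ whether the candidate element above is actually a square in $\KK$ (and setting $\gamma=1-\alpha$) gives $q(\KK)=2^{7+\alpha}$ in the $N(\varepsilon_{2p})=1$ case and $q(\KK)=2^{6+\alpha}$ in the $N(\varepsilon_{2p})=-1$ case, leading in both to $h_{2}(\KK)=\tfrac{1}{2^{4-\alpha}}h_{2}(2p)h_{2}(pq)h_{2}(2pq)$. The main obstacle, as in Theorem \ref{T_7_-1}, is that the norm elimination alone does not decide whether $\alpha=0$ or $\alpha=1$; in contrast with that earlier theorem, here I do not see an auxiliary cyclic-subfield argument forcing a single value, so the cleanest resolution (matching Theorems \ref{T_7_1-C6} and \ref{T_7_1-C8}) is to leave $\alpha$ as an intrinsic parameter and state both the generator list and $h_2(\KK)$ in $\alpha$-indexed form.
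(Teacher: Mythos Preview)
Your approach is essentially identical to the paper's: fix the fundamental systems of $k_1,k_2,k_3$ exactly as you do, write the candidate $\xi^2$ with exponents in $\{0,1\}$, and eliminate via the four norms $1+\tau_2$, $1+\tau_1\tau_2$, $1+\tau_1\tau_3$, $1+\tau_1$ using Table~\ref{tab3 case: 7=1 mod 8 and ()=1}, \eqref{norm q=7 p=1mod4q} and \eqref{T_3_-1_eqi2p_N=1}. The paper carries this out and obtains precisely your conclusions: in item~1) all exponents are forced to zero (so only $\varepsilon_{pq}$ survives as a square), and in item~2) the surviving candidate is $\xi^2=\varepsilon_2^{a}\varepsilon_p^{gu}\sqrt{\varepsilon_{pq}\varepsilon_{2pq}}^{\,g}\sqrt{\varepsilon_{2p}}^{\,g}$ with $a+gu+g\equiv 0\pmod 2$, leaving $\alpha$ undetermined exactly as you say.

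One small slip: your stated values $q(\KK)=2^{6+\alpha}$ and $2^{7+\alpha}$ are off. Counting the independent square roots over $\prod_i E_{\QQ(\sqrt{d_i})}$ gives $q(\KK)=2^{5}$ in item~1) and $q(\KK)=2^{5+\alpha}$ in item~2); plugging these into \eqref{classnumberofKK case: 7=1 mod 8 and ()=1} yields the $\tfrac{1}{2^{4}}$ and $\tfrac{1}{2^{4-\alpha}}$ that you (and the theorem) state. The final formulas are correct; only the intermediate $q(\KK)$ values need adjusting.
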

	\begin{proof} 
		\begin{enumerate}[\rm 1)]
			\item 		Assume that   $N(\varepsilon_{2p})=-1$.  By Lemma \ref{units of k1},     $\{\varepsilon_{2}, \varepsilon_{p},	\sqrt{\varepsilon_{2}\varepsilon_{p}\varepsilon_{2p}}\}$  is a  fundamental system of units of $k_1$. Using Lemmas \ref{lm expressions of units q_2q} and \ref{lm expressions of units under cond 3 case: 7=1 mod 8 and ()=1}, we check   that
			$\{\varepsilon_{2}, \sqrt{\varepsilon_{q}}, \sqrt{\varepsilon_{2q}}\}$ and $\{ \varepsilon_{2}, 	{\varepsilon_{ pq}},\sqrt{\varepsilon_{pq}\varepsilon_{ 2pq}}\}$ are respectively fundamental systems of units of $k_2$ and $k_3$.
			It follows that,  	$$E_{k_1}E_{k_2}E_{k_3}=\langle-1,  \varepsilon_{2}, \varepsilon_{p},   \sqrt{\varepsilon_{q}}, \sqrt{\varepsilon_{2q}},   {\varepsilon_{ pq}} ,\sqrt{ \varepsilon_{pq}\varepsilon_{ 2pq}}, \sqrt{\varepsilon_{2}\varepsilon_{p}\varepsilon_{2p}}\rangle.$$	
			Thus we shall determine elements of $E_{k_1}E_{k_2}E_{k_3}$ which are squares in $\KK$. 
			Notice that by Lemma \ref{lm expressions of units under cond 3 case: 7=1 mod 8 and ()=1},
			$\varepsilon_{pq}$ is a square in  $\KK$.
			Let  $\xi$ is an element of $\KK$ which is the  square root of an element of $E_{k_1}E_{k_2}E_{k_3}$. We can assume that
			$$\xi^2=\varepsilon_{2}^a\varepsilon_{p}^b \sqrt{\varepsilon_{q}}^c\sqrt{\varepsilon_{2q}}^d\sqrt{\varepsilon_{pq}\varepsilon_{ 2pq}}^f 
			\sqrt{\varepsilon_{2}\varepsilon_{p}\varepsilon_{2p}}^g,$$
			where $a, b, c, d,   f$ and $g$ are in $\{0, 1\}$.

			\noindent\ding{224}  Let us start	by applying   the norm map $N_{\KK/k_2}=1+\tau_2$. We have 
			$\sqrt{\varepsilon_{2}\varepsilon_{p}\varepsilon_{2p}}^{1+ \tau_2}=(-1)^v\varepsilon_{2}$, for some $v\in\{0,1\}$. By means of 
			\eqref{norm q=7 p=1mod4q} and Table \ref{tab3 case: 7=1 mod 8 and ()=1},   we get:
			\begin{eqnarray*}
				N_{\KK/k_2}(\xi^2)&=&
				\varepsilon_{2}^{2a}\cdot(-1)^b \cdot \varepsilon_{q}^c\cdot\varepsilon_{2q}^d\cdot1 \cdot (-1)^{gv}\varepsilon_{2}^{g}\\
				&=&	\varepsilon_{2}^{2a}  \varepsilon_{q}^c\varepsilon_{2q}^d\cdot(-1)^{b+  gv} \varepsilon_{2}^g.
			\end{eqnarray*}
			Thus    $b+  gv= 0\pmod2$ and $g=0$. Therefore, $b=0$ and 
			$$\xi^2=\varepsilon_{2}^a \sqrt{\varepsilon_{q}}^c\sqrt{\varepsilon_{2q}}^d\sqrt{\varepsilon_{pq}\varepsilon_{ 2pq}}^f.$$
	
		\noindent\ding{224} Let us apply the norm $N_{\KK/k_5}=1+\tau_1\tau_2$, with $k_5=\QQ(\sqrt{q}, \sqrt{2p})$. By the table \eqref{norm q=7 p=1mod4q} and Table \ref{tab3 case: 7=1 mod 8 and ()=1}, we  have:	
	\begin{eqnarray*}
		N_{\KK/k_5}(\xi^2)&=&(-1)^a  \cdot(-1)^c\cdot \varepsilon_{  q}^c\cdot(-1)^d \cdot \varepsilon_{ 2pq}^f   \\
		&=&	 (-1)^{a + c+d   }\varepsilon_{  q}^c \varepsilon_{ 2pq}^f  .
	\end{eqnarray*}
	Thus $c=f$ and   $a + c+d   =0\pmod 2$.   Therefore,  
			$$\xi^2=\varepsilon_{2}^a \sqrt{\varepsilon_{q}}^c\sqrt{\varepsilon_{2q}}^d\sqrt{\varepsilon_{pq}\varepsilon_{ 2pq}}^c.$$
	
	\noindent\ding{224} Let us apply the norm $N_{\KK/k_6}=1+\tau_1\tau_3$, with $k_6=\QQ(\sqrt{p}, \sqrt{2q})$. By the table \eqref{norm q=7 p=1mod4q} and Table \ref{tab3 case: 7=1 mod 8 and ()=1}, we  have:
	\begin{eqnarray*}
		N_{\KK/k_6}(\xi^2)&=&(-1)^a  \cdot(-1)^c  \cdot (-1)^d\cdot \varepsilon_{2  q}^d\cdot  \varepsilon_{ 2pq}^c \\
		&=&	   \varepsilon_{ 2pq}^c(-1)^{a + c+d   }  \cdot\varepsilon_{2  q}^d .
	\end{eqnarray*}	
	Thus  $d =0$ and $a + c+d   =0\pmod 2$.	Therefore,	 $a=c$ and 
			$$\xi^2=\varepsilon_{2}^a \sqrt{\varepsilon_{q}}^a \sqrt{\varepsilon_{pq}\varepsilon_{ 2pq}}^a.$$
	
		\noindent\ding{224} Let us apply the norm $N_{\KK/k_4}=1+\tau_1$, with $k_4=\QQ(\sqrt{p}, \sqrt{q})$.  By the table \eqref{norm q=7 p=1mod4q} and Table \ref{tab3 case: 7=1 mod 8 and ()=1}, we  have:		
	\begin{eqnarray*}
		N_{\KK/k_4}(\xi^2)&=&(-1)^a\cdot(-1)^a\cdot  \varepsilon_{  q}^a\cdot  \varepsilon_{  pq}^a=  \varepsilon_{  pq}^a\cdot \varepsilon_{  q}^a .
	\end{eqnarray*}
 Thus, $a=0$. So we have the first item.
	
	\item Assume that   $N(\varepsilon_{2p})=1$. So we have:
$$E_{k_1}E_{k_2}E_{k_3}=\langle-1,  \varepsilon_{2}, \varepsilon_{p},   \sqrt{\varepsilon_{q}}, \sqrt{\varepsilon_{2q}},   {\varepsilon_{ pq}} ,\sqrt{ \varepsilon_{pq}\varepsilon_{ 2pq}}, \sqrt{\varepsilon_{2p}}\rangle.$$		
To determine the elements of $E_{k_1}E_{k_2}E_{k_3}$ which are squares in $\KK$, let us consider $\xi$   an element of $\KK$ which is the  square root of an element of $E_{k_1}E_{k_2}E_{k_3}$. As $\varepsilon_{pq}$ is a square in  $\KK$, we can assume that
$$\xi^2=\varepsilon_{2}^a\varepsilon_{p}^b \sqrt{\varepsilon_{q}}^c\sqrt{\varepsilon_{2q}}^d\sqrt{\varepsilon_{pq}\varepsilon_{ 2pq}}^f 
\sqrt{ \varepsilon_{2p}}^g,$$
where $a, b, c, d,   f$ and $g$ are in $\{0, 1\}$.

\noindent\ding{224}  Let us start	by applying   the norm map $N_{\KK/k_2}=1+\tau_2$.  By \eqref{T_3_-1_eqi2p_N=1}, 
\eqref{norm q=7 p=1mod4q} and Table \ref{tab3 case: 7=1 mod 8 and ()=1},   we have:
\begin{eqnarray*}
	N_{\KK/k_2}(\xi^2)&=&
	\varepsilon_{2}^{2a}\cdot(-1)^b \cdot \varepsilon_{q}^c\cdot\varepsilon_{2q}^d\cdot1 \cdot (-1)^{gu} \\
	&=&	\varepsilon_{2}^{2a}  \varepsilon_{q}^c\varepsilon_{2q}^d\cdot(-1)^{b+ gu}  .
\end{eqnarray*}

Thus    $b+  gu= 0\pmod2$. So $b=gu$.

\noindent\ding{224} Let us apply the norm $N_{\KK/k_5}=1+\tau_1\tau_2$, with $k_5=\QQ(\sqrt{q}, \sqrt{2p})$. By the table \eqref{norm q=7 p=1mod4q} and Table \ref{tab3 case: 7=1 mod 8 and ()=1}, we  have:	
\begin{eqnarray*}
	N_{\KK/k_5}(\xi^2)&=&(-1)^a \cdot (-1)^b \cdot(-1)^c\cdot \varepsilon_{  q}^c\cdot(-1)^d  \cdot \varepsilon_{2pq}^f \cdot(-1)^g \cdot\varepsilon_{2p}^g\\
	&=&	 (-1)^{a+b+ c+d +g  }\varepsilon_{  q}^c\varepsilon_{2pq}^f \varepsilon_{2p}^g  .
\end{eqnarray*}
Thus   $a+b+ c+d +g      =0\pmod 2$ and $c+f+  g      =0\pmod 2$.   

\noindent\ding{224} Let us apply the norm $N_{\KK/k_6}=1+\tau_1\tau_3$, with $k_6=\QQ(\sqrt{p}, \sqrt{2q})$. By the table \eqref{norm q=7 p=1mod4q} and Table \ref{tab3 case: 7=1 mod 8 and ()=1}, we  have:
\begin{eqnarray*}
	N_{\KK/k_6}(\xi^2)&=&(-1)^a\cdot \varepsilon_{p}^{2b} \cdot (-1)^c  \cdot (-1)^d\cdot \varepsilon_{2  q}^d \cdot   \varepsilon_{2pq}^f\cdot (-1)^{gu+g}\\
	&=&	   \varepsilon_{p}^{2b}\varepsilon_{2pq}^f (-1)^{a+c+d +gu+g}\varepsilon_{2  q}^d.
\end{eqnarray*}	
Thus $d=0$ and so $a+c  +gu+g =0\pmod 2$.   
Therefore,
$$\xi^2=\varepsilon_{2}^a\varepsilon_{p}^b \sqrt{\varepsilon_{q}}^c \sqrt{\varepsilon_{pq}\varepsilon_{ 2pq}}^f \sqrt{ \varepsilon_{2p}}^g.$$

\noindent\ding{224} Let us apply the norm $N_{\KK/k_4}=1+\tau_1$, with $k_4=\QQ(\sqrt{p}, \sqrt{q})$.  By the table \eqref{norm q=7 p=1mod4q} and Table \ref{tab3 case: 7=1 mod 8 and ()=1}, we  have:		
\begin{eqnarray*}
	N_{\KK/k_4}(\xi^2)&=&(-1)^{a}\cdot \varepsilon_{p}^{2a}\cdot (-1)^c\cdot \varepsilon_{q}^{c}  \cdot \varepsilon_{  pq}^f\cdot (-1)^{gu+g}\\
	&=& \varepsilon_{p}^{2b}\varepsilon_{  pq}^f(-1)^{a+c +gu+g}\varepsilon_{q}^{c} .
\end{eqnarray*}
Thus       $c=0$ and so  $a  +gu+g=0\pmod 2$. Since $c+f+  g      =0\pmod 2$, we have $f=g$. 
 Therefore,
 $$\xi^2=\varepsilon_{2}^a\varepsilon_{p}^{gu}   \sqrt{\varepsilon_{pq}\varepsilon_{ 2pq}}^g \sqrt{ \varepsilon_{2p}}^g,$$
 with $a  +gu+g=0\pmod 2$. So we have the second item.

		\end{enumerate}	
\end{proof}

		\begin{exam}Using  PARI/GP calculator version  2.15.5 (64bit), Feb 11 2024, we  get the following examples which illustrate the above theorem.
		\begin{enumerate}[\rm $1)$]
			\item   Consider the prime numbers $p=313$ and  $q=7$. In this case, the   conditions of the first item of the above theorem are verified and we have:
			$$E_{\KK}=\langle -1,   \varepsilon_{2}, \varepsilon_{p} ,     \sqrt{\varepsilon_{ q}}, \sqrt{\varepsilon_{2q}},\sqrt{ \varepsilon_{pq}} ,\sqrt{ \varepsilon_{2pq}}, \sqrt{\varepsilon_{2}\varepsilon_{p}\varepsilon_{2p}}   \rangle.$$
			Furthermore, we have $h_2(2p)=8$, $h_2(pq)=4$,  $h_2(2pq)=8$ and  $h_2(\KK)=8$.
			\item   Consider the prime numbers $p=73$ and  $q=383$. In this case, the   conditions of the second item of the above theorem are verified and we have:
			$$E_{\KK}=\langle -1,   \varepsilon_{2}, \varepsilon_{p},  \sqrt{\varepsilon_{q}}, \sqrt{\varepsilon_{2q}},  \sqrt{\varepsilon_{pq}} ,\sqrt{ \varepsilon_{2pq}},      \sqrt{     \varepsilon_{2}^{a}\varepsilon_{p}^{ u}   \sqrt{\varepsilon_{pq}\varepsilon_{ 2pq}} \sqrt{ \varepsilon_{2p}}   }  \rangle,$$
			for some 	$a\in\{0,1\}$ such that $a \equiv 1+u\pmod2$.	Furthermore, we have $h_2(2p)=2$, $h_2(pq)=8$,  $h_2(2pq)=4$ and  $h_2(\KK)=8$.
		\end{enumerate}
	\end{exam}

 \section*{Acknowledgment}
 We would like to thank the unknown referee  for his/her several helpful suggestions that helped us to improve our paper.

\end{document}